\newcommand{\ad}{\mathrm{ad}}
\newcommand{\ndash}{\nobreakdash-\hspace{0pt}}
\newtheoremstyle{axel}
{}		
{}		
{\itshape}	
{}		
{\bfseries}	
{}		
{0pt\penalty1000}		
{}		
\newtheoremstyle{axelnl}
{}		
{}		
{\itshape}	
{}		
{\bfseries}	
{}		
{\newline}	
{}		
\theoremstyle{axelnl}
\newtheorem{theorem}{Theorem}[section]
\newtheorem{definition}{Definition}[section]
\newtheorem{cor}{Corollary}[section]
\newtheorem{remark}{Remark}[section]
\newtheorem{lemma}{Lemma}[section]
\newtheorem{example}{Example}[section]
\theoremstyle{axel}
\newtheorem{lemmas}[lemma]{Lemma}
\newtheorem{definitions}[definition]{Definition}
\newcommand{\needs}[1]{\par\penalty-100\begingroup \dimen@\pagegoal\advance\dimen@-\pagetotal
\ifdim #1>\dimen@
 \pagebreak
\break
\fi
\endgroup}
\begin{document}
\title{Kac-Moody geometry}
\author{Walter Freyn}
\date{March 2010}
\maketitle

\begin{abstract}
The geometry of symmetric spaces \cite{Helgason01}, polar actions, isoparametric submanifolds \cite{Berndt03} and spherical buildings \cite{AbramenkoBrown08} is governed by spherical Weyl groups and simple Lie groups. The most natural generalization of semisimple Lie groups are affine Kac-Moody groups as they mirror their structure theory and have good explicitly known representations as groups of operators \cite{Tits84}, \cite{PressleySegal86}, \cite{Remy02}. In this article we describe the infinite dimensional differential geometry associated to affine Kac-Moody groups: Kac-Moody symmetric spaces \cite{Freyn07}, \cite{Freyn09}, isoparametric submanifolds in Hilbert space \cite{Terng89}, polar actions on Hilbert spaces \cite{terng95} and universal geometric twin buildings \cite{Freyn09}, \cite{freyn10b}.\\
\noindent{\bf Key words:} Kac-Moody group, loop group, Kac-Moody symmetric space, polar action, isoparametric submanifold, building, twin building\\
\noindent {\bf MSC(2010):} 20F42, 20Gxx, 22E67, 14M15, 53C35

\end{abstract}

\section{What is it all about}

In this article we describe the infinite dimensional differential geometry governed by affine Kac-Moody groups. 
The theory of Kac-Moody algebras and Kac-Moody groups emerged around 1960 independently in the works of V.\ G.\ Kac~\cite{Kac68}, R.\ V.\ Moody~\cite{Moody69}, I.\ L.\ Kantor~\cite{Kantor68} and D.-N.\ Verma (unpublished). From a formal, algebraic point of view, Kac-Moody algebras can be understood as realizations of generalized Cartan matrices~\cite{Kac90}. Hence Kac-Moody algebras appear as a natural generalization of simple Lie algebras. Furthermore it was pointed out that there is an explicit description of affine Kac-Moody algebras in terms of extensions of loop algebras. This point of view links the theory of affine Kac-Moody algebras to the theory of simple Lie algebras in another very elementary geometric way. A completion of the loop algebras with respect to various norms opens the way to the use of functional analytic methods~\cite{PressleySegal86}.

The next milestone was the discovery of a close link between Kac-Moody algebras and infinite dimensional differential geometry around 1990~\cite{HPTT}. Chuu-Lian Terng proved that certain isoparametric submanifolds in Hilbert spaces and polar actions on Hilbert spaces can be described using completions of algebraic Kac-Moody algebras~\cite{Terng89}, \cite{terng95}. In the parallel finite dimensional theory, the isometry groups are semisimple Lie groups. Hence this points again to the close similarity between semisimple Lie groups and Kac-Moody groups. During the same time Jacques Tits developed the idea of twin buildings and twin BN-pairs which are associated to algebraic Kac-Moody groups much the same way as buildings and BN-pairs are associated to simple Lie groups~\cite{Tits84}.  All those developments hint to the idea, that the rich subject of finite dimensional geometric structures whose symmetries are described by Lie groups should have an infinite dimensional counterpart, whose symmetries are affine Kac-Moody groups~\cite{Heintze06}. 

The most important among those objects are the following four classes:

\begin{samepage}
\begin{itemize}
\item[-] symmetric spaces,
\item[-] polar actions,
\item[-] buildings over the fields $\mathbb{R}$ or $\mathbb{C}$,
\item[-] isoparametric submanifolds.
\end{itemize}
\end{samepage}

In the first two examples, homogeneity is intrinsic, in the second two examples homogeneity is a priori an additional assumption. However, homogeneity can be proven under the assumption of sufficiently high rank.
The geometry of those examples reflects the two most important decompositions of the Lie groups:  the structure of symmetric spaces is a consequence of the Iwasawa decomposition, the structure of the building is a consequence of the system of parabolic subgroups and connected to the Bruhat decomposition. 
Let us note, that similar decompositions exist also for affine Kac-Moody groups.

In the investigation of those classes of objects one encounters two decisive structural elements: First, they all have a class of special \glqq flat\grqq\ subspaces (resp.\ subcomplexes) equipped with the action of a spherical reflection group. Subgroups of this reflection group fix special subspaces (resp.\ lower dimensional cells). Those reflection groups correspond to the Weyl groups of simple Lie algebras. Second, those flat subspaces are pieced together such that they meet in spheres around this set of special subspaces.

Evidently we want a similar feature for our infinite dimensional theory. Taking into account that Weyl groups of affine Kac-Moody algebras are affine Weyl groups, that is reflection groups on spaces of signature $(+++++++++0)$, we expect the reflection groups appearing in Kac-Moody geometry to be of this type. Unfortunately the metric $(+++++++++0)$ is degenerate. To get a nondegenerate metric, as we wish it for the construction of pseudo-Riemannian manifolds (and hence symmetric spaces), we have to add an additional dimension and choose a metric, that is defined by a pairing between the last coordinate and our new one. Hence the resulting spaces will be Lorentzian.

\noindent This philosophy gives us the following picture:

{\it There is a class of Lorentz symmetric spaces whose group of transvections is an affine Kac-Moody group.  The classification of irreducible Kac-Moody symmetric spaces is analogous to the one of irreducible finite dimensional symmetric spaces. Their isotropy representations induce polar actions on the Lie algebra. Because of the additional dimension we expect the essential part to be $1$-codimensional.
Principal orbits are isoparametric submanifolds. There is a class of buildings, whose chambers correspond to the points of the isoparametric submanifolds. As the system of parabolic subgroups consists of two \glqq opposite\grqq\ conjugacy classes, the building will consist of two parts; as the Bruhat decompositions do not exist on the whole Kac-Moody group, each of those parts is highly disconnected.}

By work done during the last 20 years this picture is now well established. In this survey we describe the main objects and connections between them, focusing on more recent work. 

  In section \ref{section:reflection_groups} we start by the investigation of spherical and affine Coxeter groups and describe the connections between them~\cite{AbramenkoBrown08}, \cite{Davis08}. Then we describe the finite dimensional theory (section \ref{section:the_finite_dimensional_blue_print})~\cite{Helgason01}, \cite{Berndt03}, \cite{AbramenkoBrown08}.
After that, we turn to the infinite dimensional theory and introduce affine Kac-Moody algebras (section \ref{section:geometric_affine_Kac-Moody_algebras}) and their Kac-Moody groups (section \ref{section:Affine_Kac-Moody_groups})~\cite{Kac90}, \cite{PressleySegal86}, \cite{Freyn09}. Having described the symmetry groups, we turn to the geometry governed by them: In section \ref{section:Kac-Moody_symmetric_spaces} we introduce Kac-Moody symmetric spaces~\cite{Freyn07}, \cite{Freyn09}. Then we turn to infinite dimensional polar actions (section \ref{section:Polar_actions_on_Hilbert_and_Frechet_spaces})~\cite{terng95}, and isoparametric submanifolds (section \ref{section:Isoparametric_submanifolds})~\cite{Terng89}. Finally, in section \ref{section:Universal_geometric_twin_buildings}, we investigate universal geometric twin buildings~\cite{Freyn09}. The last section, section \ref{section:conclusion_and_outlook}, is devoted to a description of open problems and directions for future research.

\section{Reflection groups}
\label{section:reflection_groups}

Let $V$ be a vector space equipped with a not necessarily positive definite inner product $\langle\ ,\ \rangle$.  An (affine) hyperplane $H$ in $V$ uniquely determines an (affine) reflection $s_H: V\longrightarrow V$.  A reflection group $W$ is a group of reflections, such that $W$ equipped with the discrete topology acts properly discontiuous on $V$. Hence it is defined by a set of hyperplanes $\mathcal{H}=\{H_i, i \in I\}$ such that $s_j(H_i)\subset \mathcal{H}\ \forall i,j$. Each reflection group is a discrete subgroup of the group $O(\langle\ , \rangle)$.

The structure of the resulting groups depends now on the metric $\langle\ ,\ \rangle$ on $V$.

The most common case is the one such that $\langle\ ,\ \rangle$ is positive definite, i.e.\ without loss of generality the standard Euclidean metric. The resulting reflection group is a discrete subgroup of the orthogonal group $O(\langle\ ,\ \rangle)$. As the unit sphere of $\langle\ ,\ \rangle$ is compact and $W $ is supposed to act properly discontinuous, we deduce that $W$ is a finite group. 

Define a group $W$ to be a Coxeter group if it admits a presentation of the following type:
$$W:=\langle s_1, \dots, s_n, n\in \mathbb{N} | s_i^2=1, (s_i s_k)^{m_{ik}}=1, i, k = 1, \dots, n \textrm{ and } m_{ik} \in \mathbb{N}\cup \infty\rangle\,.$$

Finite reflection groups are exactly finite Coxeter groups. Furthermore any finite Coxeter group admits a realization as a subgroup of the orthogonal group of a positive definite metric~\cite{Bourbaki02}.

$K:=\{1, \dots, n\}$ is called the indexing set. If $K'\subset K$, we use the notation $W_{K'}\subset W \cong W_K$ for the sub-Coxeter group generated by $K'$. The matrix $M=(m_{ik})_{i,k\in K}$ is called the Coxeter matrix. A Coxeter system is a pair $(W,S)$ consisting of a Coxeter group $W$ and a set of generators $S$ such that $\textrm{ord}(s)=2$ for all $s\in S$.

Any group element $w\in W$ is a word in the generators $s_i \in S$.  We define the length $l(w)$ of an element $w\in W$ to be the length of the shortest word representing $w$. The specific word -- and thus the length $l(w)$ -- depends on the specified set $S$ of generators. Nevertheless many global properties are preserved by a change of generators -- see~\cite{Davis08}.

\noindent Simple finite Coxeter groups have a complete classification:

\begin{enumerate}
 \item type $A_n$ is the symmetric group in $n$-letters.
 \item type $C_n$ resp.\ $B_n$ is the group of signed permutations of $n$  elements.
 \item type $D_n$ is the Weyl group of the orthogonal groups $SO(2n, \mathbb{C})$. 
 \item type $G_2, F_4, E_6, E_7, E_8$ are the Weyl groups of the Lie groups of the same names.
 \item type $H_3$ is the symmetry group of the 3-dimensional Dodecahedron and the Icosahedron, $H_4$ is the symmetry group of a regular $120$-sided solid in $4$ space whose $3$-dimensional faces are dodecahedral.
\item $I_n, n=5$ or $n>7$ is the dihedral group of order $2n$. We have the equivalences $I_1=\mathbb{Z}_2$, $I_2=A_1\times A_1$, $I_3=A_2$, $I_4=C_2$, $I_6=G_2$.
\end{enumerate}
 
Call a Coxeter group \glqq crystallographic\grqq\ if it stabilizes a lattice. The crystallographic Coxeter groups are  types $A_n, C_n, D_n, E_6, E_7, E_8, F_4, G_2$. Crystallographic Coxeter groups are exactly those that appear as Weyl groups of the root systems of finite simple Lie algebras~\cite{Bourbaki02}.

Closely related to finite Coxeter groups are affine Weyl groups. They are discrete subgroups of the group of affine transformations of a vector space. This group is a semidirect product of $O(\langle\ ,\ \rangle)$ with the group of translations of $V$. Hence $W_{\textrm{aff}}=L \rtimes W$ with $W$ a finite Coxeter group and $L\cong \mathbb{Z}^k$ a lattice in $V$. In particular $W$ is crystallographic. There is an easy way to linearize affine transformations by embedding the affine $n$\ndash space $V^n$ into an affine subspace i.e. the one defined by\ $x_{n+1}=1$ of an $n+1$\ndash dimensional vector space $V^{n+1}$. Then the group of affine transformations on  $V^n$ embeds into the general linear group $GL(V^{n+1})$~\cite{AbramenkoBrown08}.  
$V^{n+1}$ carries a degenerate metric of signature $(n,0)$. Hence we can interpret affine Weyl group as linear reflection groups in a vector space with a degenerate metric. In this form they appear as Weyl groups of affine Kac-Moody algebras. To get a non-degenerate metric on the Kac-Moody algebra we will add a further direction, to pair it with the degenerate direction. Hence the resulting $n+1$\ndash dimensional vector space will carry a Lorentz structure~\cite{Kac90}.

The next natural case are reflection groups in a space with a Lorentz metric. They appear as Weyl groups of hyperbolic Kac-Moody algebras \cite{Ray06}. While hyperbolic Kac-Moody algebras have applications in physics and in mathematics for example in M-theory and supergravity, there is up to now no differential geometry developed admitting hyperbolic Kac-Moody groups as symmetry groups.

\section{The finite dimensional blue print}
\label{section:the_finite_dimensional_blue_print}

This section introduces the finite dimensional geometry whose symmetries are governed by semisimple Lie groups. Useful references are  \cite{Helgason01}, \cite{Berndt03}, \cite{AbramenkoBrown08} and \cite{PalaisTerng88}.

\subsection{Semisimple Lie groups}

We define a Lie algebra to be semisimple if it has no Abelian ideals. It is simple if it is not $1$\ndash dimensional and has no nontrivial ideal. A Lie group is (semi\ndash )simple, if its Lie algebra is (semi\ndash )simple.

Classical examples are $SL(n, \mathbb{C}):=\{X\in \textrm{Mat}^{n\times n}(\mathbb{C})|\textrm{det}(X)=1\}$ with Lie algebra $\mathfrak{sl}(n, \mathbb{C}):=\{X\in \textrm{Mat}^{n\times n}(\mathbb{C})|\textrm{trace}(X)=0\}$, the orthogonal groups $SO(n, \mathbb{C}):=\{X\in \textrm{Mat}^{n\times n}(\mathbb{C})|XX^{t}=\textrm{Id}\}$. Every complex semisimple Lie algebra (resp.\ Lie group) is the direct product of complex simple Lie algebras (resp.\ groups). Hence we focus our study on simple Lie algebras and Lie groups.

We call a real simple Lie algebra $\mathfrak{g}$ a real form of the complex simple Lie algebra $\mathfrak{g}_{\mathbb{C}}$ if its complexification is isomorphic to $\mathfrak{g}_{\mathbb{C}}$. A simple Lie algebra or Lie group has various real forms:
Real forms of $SL(n,\mathbb{C})$ are among others $SU(n):=\{g\in SL(n, \mathbb{C})|g \overline{g}^t=\textrm{Id}\}$ and $SL(n, \mathbb{R}):=\{X\in \textrm{Mat}^{n\times n}(\mathbb{R)}|\textrm{det}(X)=1\}$

Real forms are in bijection with conjugate linear involutions: Fixing a real form, complex conjugation along this real form defines the involution. Starting with a conjugate linear involution, the fixed point set is a real form.

Among those real forms there is a unique distinguished compact one:

\begin{theorem}
Each complex simple Lie group has up to conjugation a unique compact real form. The same is true for simple Lie algebras.
\end{theorem}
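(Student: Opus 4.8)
The plan is to prove both statements — existence and uniqueness of a compact real form — by the classical argument via maximal compact subgroups and Cartan involutions, and I would first establish the Lie algebra version, then transfer it to the group.

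\medskip

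\noindent\textbf{Existence.} First I would fix a Chevalley basis of $\mathfrak{g}_{\mathbb{C}}$: choose a Cartan subalgebra $\mathfrak{h}$, a root system $\Delta$, and root vectors $e_\alpha$ normalized so that the structure constants $N_{\alpha\beta}$ (with $[e_\alpha,e_\beta]=N_{\alpha\beta}e_{\alpha+\beta}$) are integers and $[e_\alpha,e_{-\alpha}]=h_\alpha$ with the standard normalization. The key classical fact — which I would invoke rather than reprove — is that in such a basis one has $N_{\alpha\beta}=-N_{-\alpha,-\beta}$. Then define the real span
$$\mathfrak{u}:=\mathrm{span}_{\mathbb{R}}\{\,i\,h_\alpha\ (\alpha\in\Delta),\ e_\alpha-e_{-\alpha},\ i(e_\alpha+e_{-\alpha})\ :\ \alpha\in\Delta^+\,\}.$$
A direct bracket computation, using $N_{\alpha\beta}=-N_{-\alpha,-\beta}$ and reality of the structure constants, shows $\mathfrak{u}$ is a real subalgebra with $\mathfrak{u}\otimes_{\mathbb{R}}\mathbb{C}=\mathfrak{g}_{\mathbb{C}}$, so it is a real form. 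To see it is compact, I would exhibit the conjugation $\tau$ fixing $\mathfrak{u}$ (determined by $\tau(h_\alpha)=-h_\alpha$, $\tau(e_\alpha)=-e_{-\alpha}$) and check that the Hermitian form $-B(X,\tau Y)$, with $B$ the Killing form, is positive definite on $\mathfrak{g}_{\mathbb{C}}$; equivalently $-B|_{\mathfrak{u}\times\mathfrak{u}}$ is positive definite. Negative-definiteness of the Killing form on a real Lie algebra is exactly the criterion for the adjoint group to be compact (Weyl's theorem), which I would cite; hence $\mathfrak{u}$ is a compact real form, and the simply connected group with Lie algebra $\mathfrak{u}$ — or the corresponding compact subgroup of the complex group — gives the group statement.

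\medskip

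\noindent\textbf{Uniqueness.} Let $\mathfrak{u}_1,\mathfrak{u}_2$ be two compact real forms of $\mathfrak{g}_{\mathbb{C}}$, with conjugations $\tau_1,\tau_2$. The strategy is to show $\tau_1$ and $\tau_2$ are conjugate by an inner automorphism of $\mathfrak{g}_{\mathbb{C}}$; this forces $\mathfrak{u}_1$ and $\mathfrak{u}_2$ to be conjugate. Consider the automorphism $\sigma:=\tau_1\tau_2$ and the positive definite Hermitian inner product $\langle X,Y\rangle:=-B(X,\tau_1 Y)$ on $\mathfrak{g}_{\mathbb{C}}$. One checks that $P:=\tau_1\tau_2\tau_1\tau_2$ — equivalently $\sigma^2$, but more usefully the operator $\tau_1\tau_2$ composed appropriately — is positive definite self-adjoint with respect to $\langle\,,\rangle$: the point is that $\tau_2$ is an involutive automorphism, $B$ is $\tau_2$-invariant, and $\tau_1=\tau_1^{-1}$, from which symmetry and positivity follow. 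Write $P=\exp(D)$ with $D$ a self-adjoint derivation of $\mathfrak{g}_{\mathbb{C}}$, so $\exp(tD/2)$ is a one-parameter family of inner automorphisms (derivations of a semisimple algebra are inner). A standard computation shows that $\rho:=\exp(\tfrac14 D)$ satisfies $\rho\,\tau_2\,\rho^{-1}$ commutes with $\tau_1$; then $\mathfrak{u}':=\rho(\mathfrak{u}_2)$ is a compact real form whose conjugation commutes with $\tau_1$, and two commuting conjugations onto compact real forms of a simple algebra must coincide (their product is an automorphism of order $\le 2$ fixing a compact real form, and one rules out the nontrivial case by a dimension/positivity argument on the Killing form). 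Hence $\mathfrak{u}_2$ is conjugate to $\mathfrak{u}_1$. For the group statement, conjugacy of the Lie algebras lifts: a compact real form of $G_{\mathbb{C}}$ is a maximal compact subgroup (its Lie algebra being a compact real form of $\mathfrak{g}_{\mathbb{C}}$), and maximal compact subgroups of a connected Lie group are all conjugate, which I would cite.

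\medskip

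\noindent The main obstacle is the uniqueness part — specifically, producing the inner automorphism $\rho$ that intertwines the two conjugations. This is the heart of the matter and relies on the interplay between the Killing form, the positive-definite Hermitian form attached to a compact real form, and the fact that positive self-adjoint operators have well-defined real powers within the (inner) automorphism group; the existence half, by contrast, is a direct construction from a Chevalley basis once the relation $N_{\alpha\beta}=-N_{-\alpha,-\beta}$ is granted. In a survey such as this one, I would present existence in full via the $\mathfrak{u}$ above and merely sketch uniqueness, referring to \cite{Helgason01} for the detailed estimates.
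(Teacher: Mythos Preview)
The paper does not prove this theorem; it is stated without proof as a classical result, with \cite{Helgason01} serving as the background reference for the section. Your proposal reproduces precisely the standard argument one finds there --- existence via Weyl's unitary trick on a Chevalley basis, uniqueness via Cartan's conjugacy argument --- and it is correct.

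One small clarification in your uniqueness sketch: the operator $\sigma=\tau_1\tau_2$ is itself self-adjoint with respect to $\langle X,Y\rangle=-B(X,\tau_1 Y)$ (a two-line check using $B(\tau_i X,\tau_i Y)=\overline{B(X,Y)}$), so $P=\sigma^2$ is positive and $\rho=P^{1/4}$ is a well-defined inner automorphism. From $\tau_i P^t=P^{-t}\tau_i$ one gets $\rho\tau_2\rho^{-1}=P^{1/2}\tau_2$, and then $\tau_1(P^{1/2}\tau_2)=P^{-1/2}\sigma=P^{1/2}\sigma^{-1}=(P^{1/2}\tau_2)\tau_1$ since $\sigma^2=P$. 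This is the ``standard computation'' you refer to. Your final step, that two \emph{commuting} compact conjugations on a simple algebra coincide, is also fine: the Killing form is negative definite on $\mathfrak{u}_2\cap i\mathfrak{u}_1$ (as a subspace of $\mathfrak{u}_2$) and positive definite on it (as a subspace of $i\mathfrak{u}_1$), forcing that intersection to vanish.
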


\noindent For later reference we note an important decompositions theorem of simple Lie groups:

\begin{theorem}[Iwasawa decomposition]
 A simple Lie group $G$ has a decomposition $G=KAN$, where $K$ is maximal compact, $A$ is Abelian and $N$ is nilpotent.
\end{theorem}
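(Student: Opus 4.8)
\emph{Sketch of the intended proof.} The plan is to descend to the Lie algebra, build the decomposition there, and then exponentiate, the last and most delicate point being that the resulting multiplication map is a global diffeomorphism. Write $\mathfrak g$ for the Lie algebra of $G$; by the discussion above we may regard it as a real form of a complex simple Lie algebra. First I would fix a Cartan involution $\theta$ of $\mathfrak g$ --- its existence is in effect a restatement of the uniqueness of the compact real form recorded in the previous theorem --- and write the Cartan decomposition $\mathfrak g=\mathfrak k\oplus\mathfrak p$ into the $(\pm1)$-eigenspaces of $\theta$, with $[\mathfrak k,\mathfrak k]\subset\mathfrak k$, $[\mathfrak k,\mathfrak p]\subset\mathfrak p$, $[\mathfrak p,\mathfrak p]\subset\mathfrak k$, and the Killing form negative definite on $\mathfrak k$, positive definite on $\mathfrak p$.

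Next I would choose a maximal abelian subspace $\mathfrak a\subset\mathfrak p$ and simultaneously diagonalise the commuting symmetric operators $\{\ad H:H\in\mathfrak a\}$, obtaining the restricted root space decomposition $\mathfrak g=\mathfrak g_0\oplus\bigoplus_{\lambda\in\Sigma}\mathfrak g_\lambda$ with $\mathfrak g_0=\mathfrak a\oplus\mathfrak m$, $\mathfrak m=Z_{\mathfrak k}(\mathfrak a)$, and $[\mathfrak g_\lambda,\mathfrak g_\mu]\subset\mathfrak g_{\lambda+\mu}$. Fixing a positive system $\Sigma^+\subset\Sigma$ and setting $\mathfrak n=\bigoplus_{\lambda\in\Sigma^+}\mathfrak g_\lambda$, the bracket relation forces $\mathfrak n$ to be nilpotent and $\mathfrak a\oplus\mathfrak n$ to be solvable. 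The central linear-algebra fact is the vector space identity
\[
\mathfrak g=\mathfrak k\oplus\mathfrak a\oplus\mathfrak n,
\]
which I would verify by combining $\mathfrak g_0=\mathfrak a\oplus\mathfrak m$, $\mathfrak m\subset\mathfrak k$, with the observation that $\theta$ interchanges $\mathfrak g_\lambda$ and $\mathfrak g_{-\lambda}$, so that $(\mathfrak g_\lambda\oplus\mathfrak g_{-\lambda})\cap\mathfrak k=\{X+\theta X:X\in\mathfrak g_\lambda\}$ is a complement to $\mathfrak g_\lambda$ in $\mathfrak g_\lambda\oplus\mathfrak g_{-\lambda}$; the dimensions then balance.

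To pass to the group, put $A=\exp\mathfrak a$ and $N=\exp\mathfrak n$: since $\mathfrak n$ is nilpotent and $\mathfrak a$ acts on it by semisimple derivations, $\exp$ restricts to diffeomorphisms $\mathfrak a\to A$ and $\mathfrak n\to N$ onto closed subgroups, $A$ normalises $N$, and $S:=AN$ is a closed, connected, simply connected solvable subgroup; as such it is diffeomorphic to a Euclidean space, hence has no nontrivial compact subgroup, so $S\cap K=\{e\}$. Let $K$ be the analytic subgroup with Lie algebra $\mathfrak k$; negative-definiteness of the Killing form on $\mathfrak k$ makes $K$ compact, and a fixed-point argument on $G/K$ shows it is maximal compact. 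I would then show that $\mu\colon K\times A\times N\to G$, $(k,a,n)\mapsto kan$, is a diffeomorphism. Injectivity reduces, via $S\cap K=\{e\}$ and $A\cap N=\{e\}$, to bookkeeping. For surjectivity, the orbit map $S\to G/K$ is an injective immersion between manifolds of equal dimension --- its differential at $e$ is $\mathfrak a\oplus\mathfrak n\hookrightarrow\mathfrak g\twoheadrightarrow\mathfrak g/\mathfrak k$, an isomorphism by the displayed identity --- hence an open map; since $S$ is closed and $K$ is compact one checks (lifting convergent sequences) that the orbit is also closed, so by connectedness of $G/K$ it is all of $G/K$, i.e. $G=KS=KAN$. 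Finally $\mu$ factors through $K\times S\to G$, and both factors are, by left/right equivariance, everywhere local diffeomorphisms (their differentials at the identity being sum maps, isomorphisms by the displayed identity); a bijective local diffeomorphism is a diffeomorphism.

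\emph{Expected main obstacle.} The only non-formal point is the surjectivity of $\mu$, i.e. the transitivity of $AN$ on $G/K$; everything else is linear algebra or soft topology. This is precisely where the noncompact-type hypothesis is used, through the Cartan decomposition and the resulting completeness and nonpositive curvature of $G/K$ --- without it the open orbit need not be closed. A more hands-on alternative, which I would mention but not carry out, is to prove the statement first for $SL(n,\mathbb R)$ by Gram--Schmidt orthogonalisation (the $QR$-decomposition, with $K=SO(n)$, $A$ the positive diagonal matrices of determinant one, $N$ the upper unitriangular matrices) and then transfer it to a general $G$ via a faithful representation carrying $\theta$ to the transpose-inverse involution. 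One should also flag the standard caveat that the statement is cleanest for connected $G$ with finite centre (or for the adjoint group).
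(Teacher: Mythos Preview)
Your sketch is the standard argument (Cartan involution $\to$ restricted root decomposition $\to$ linear splitting $\mathfrak g=\mathfrak k\oplus\mathfrak a\oplus\mathfrak n$ $\to$ exponentiate and verify that $K\times A\times N\to G$ is a diffeomorphism), and it is correct as outlined; the caveat about connectedness/finite centre is well placed, and your identification of the transitivity of $AN$ on $G/K$ as the one non-formal step is accurate.

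However, there is nothing to compare against: the paper does not prove this theorem. It is stated in the background section on semisimple Lie groups purely for later reference (it is used, for instance, to identify $G/T$ with $G_{\mathbb C}/B$ in the discussion of buildings), and the surrounding text simply records it as a known decomposition result. The implicit reference is Helgason~\cite{Helgason01}, whose proof is precisely the one you have sketched. So your proposal is not a different route from the paper's --- it \emph{is} the route the paper has in mind, just unspooled.
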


Now we need to relate simple Lie algebras to spherical reflection groups:

\noindent Let $G$ be a compact simple Lie group. A torus $T$ is a maximal Abelian subgroup of $G$. In a matrix representation a torus is a subgroup of simultaneously diagonalizable elements --- a maximal torus is a maximal subgroup with this property. For example a torus in $SU(n)$ consists of all diagonal matrices $T:=\textrm{diag}(a_1, \dots, a_n| a_i\in \mathbb{C}, |a_i|=1, a_1\cdot .\ .\ . \cdot a_n=1)$. It is known that all maximal tori
are conjugate and that each element in $G$ is contained in at least one maximal torus. Choose an arbitrary torus $T$. The Weyl group is defined to be $W=N/T$ where $N$ is the normalizer of $T$. In the case of $SU(n)$ the Weyl group is the group of permutations of the $n$\ndash elements $a_1, \dots, a_n$ - hence it is the symmetric group in $n$ letters. The Weyl group is automatically a finite reflection group. Via the exponential map tori correspond to Abelian subalgebras.

For complex simple Lie groups a similar procedure is possible, but the situation is a little more complicated as not every element lies in a torus. We focus on the Lie algebra.  

\begin{definition}[Cartan subalgebra]
 A Cartan subalgebra of $\mathfrak{g}$ is a subalgebra $\mathfrak{h}$ of $\mathfrak{g}$ such that 
\begin{enumerate}
 \item $\mathfrak{h}$ is maximal Abelian in $\mathfrak{g}$
 \item For each $h\in \mathfrak{h}$ the endomorphism $\textrm{ad}(h): \mathfrak{g}\longrightarrow \mathfrak{g}$ is semisimple.
\end{enumerate}
\end{definition}

While all Cartan subalgebras are conjugate, it is no longer true, that they cover $\mathfrak{g}$.

Via the adjoint action of a Cartan subalgebra on the Lie algebra one defines the root system which is a refinement of the Weyl group action. The structure of this root system can be encoded into a matrix, called Cartan matrix.

\begin{definition}[Cartan matrix]

A Cartan matrix $A^{n\times n}$ is a square matrix with integer
coefficients such that
\begin{enumerate}
  \item $a_{ii}=2$ and $a_{i\not=j}\leq 0 \label{geometriccondition}$,
  \item $a_{ij}=0 \Leftrightarrow a_{ji}=0\label{liealgebracondition} $,
  \item There is a vector $v>0$ (component wise) such that $Av>0$ (component wise).
\end{enumerate}
\end{definition}

\begin{example}[$2\times 2$\ndash Cartan matrices]

There are -- up to equivalence -- four different $2$\ndash dimensional Cartan matrices:
$$\left(\begin{array}{cc} 2&0\\0&2  \end{array}\right),
\left(\begin{array}{cc} 2&-1\\-1&2  \end{array}\right),
\left(\begin{array}{cc} 2&-1\\-2&2  \end{array}\right),
\left(\begin{array}{cc} 2&-1\\-3&2  \end{array}\right).$$

\noindent They correspond to the Weyl groups of types
$A_1\times A_1, A_2, B_2, G_2$.
\end{example}

\begin{definition}
A Cartan matrix $A^{n\times n}$ is called decomposable iff $\{1, 2, \dots, n\}$ has a decomposition in two non-empty sets $N_1$ and $N_2$ such that $a_{ij}=0$ for $i\in N_1$ and $j\in N_2$. Otherwise it is called indecomposable.
\end{definition}

A complete list of indecomposable Cartan matrices consists of 
 $A_n$, $B_{n, n\geq 2}$, $C_{n, n\geq 3}$, $D_{n, n \geq 4}$, $E_6$, $E_7$, $E_8$, $F_4$, $G_2$~\cite{Bourbaki02}.

Conversely, starting with a Cartan matrix $A$ one can construct a Lie algebra $\mathfrak{g}(A)$ called its realization:

\begin{definition}[Realization]
Let $A^{n \times n}$ be a Cartan matrix. The realization of $A$, denoted $\mathfrak{g}(A)$, is the algebra
$$\mathfrak{g}(A^{n\times n}) = \langle e_i, f_i, h_i, i=1,\dots, n| \textrm{R}_1, \dots, \textrm{R}_6\rangle\,,$$ where
\begin{displaymath}
  \begin{array}{cl}
    \textrm{R}_1:& [h_i,h_j]=0\,,\\
    \textrm{R}_2:& [e_i,f_j]=h_i \delta_{ij}\,,\\
    \textrm{R}_3:& [h_i, e_j]=a_{ji}e_j\,,\\
    \textrm{R}_4:& [h_i, f_j]=-a_{ji}f_j\,,\\
    \textrm{R}_5:& (\ad e_i)^{1-a_{ji}}(e_j)=0 \hspace{3pt}(i\not= j)\,,\\
    \textrm{R}_6:& (\ad f_i)^{1-a_{ji}}(f_j)=0\hspace{3pt} (i\not= j)\,.\\
  \end{array}
\end{displaymath}
\end{definition}

In consequence there is a bijection between indecomposable Cartan matrices and complex simple Lie algebras; hence the classification of Cartan matrices yields also a complete list of simple Lie algebras. If a Cartan matrix $A^{(n+m)\times (n+m)}$ is decomposable into the direct sum of two Cartan matrices $A^{n\times n}$ and $A^{m\times m}$ then the same decomposition holds for the realizations: It is a direct product of (semi-)simple Lie algebras. This decomposition into direct factors is furthermore reflected in the structure of the geometric objects associated to those Lie algebras.

A further important structure property of simple Lie groups is the BN-pairs structure:

\begin{definition}[Borel subgroup, parabolic subgroup]
Let $G_{\mathbb{C}}$ be a complex simple Lie group.
A Borel subgroup $B$ is a maximal solvable subgroup. A subgroup $P \subset G_{\mathbb{C}}$ is called parabolic iff it contains a Borel subgroup. 
\end{definition} 

\begin{example} 
\begin{itemize}
 \item[-] The standard Borel subgroup in $SL(n,\mathbb{C})$ is the group of upper triangular matrices. All Borel subgroups are conjugate.
\item[-]  A standard parabolic subgroup in $SL(n, \mathbb{C})$ is an upper block-triangular matrix, that is an upper triangular matrix having blocks on its diagonal. There are several conjugacy classes of parabolic subgroups, corresponding to the various block-triangular matrices.
\end{itemize}
\end{example}

\noindent The BN-pair structure formalises the way those groups are assembled to yield a simple Lie group: 

\begin{definition}[BN\ndash pair]

Let $G_{\mathbb{C}}$ be a complex simple Lie group.  A set $(B,N,W,S)$ is a BN\ndash pair for $G$ iff:
\begin{enumerate}
\item $G=\langle B, N\rangle$. Moreover $T= B\cap N  \lhd N$ and $W=N/T$.
\item $s^2=1\hspace{3pt}\forall s\in  S$ and $W=\langle S\rangle$ and $(W,S)$ is a Coxeter system.
\item Let $C(w):=BwB$. Then $C(s)C(w)\subseteq C(w)\cup C(sw) \hspace{3pt} \forall s\in S$ and $w\in W$.
\item $\forall s\in S: sBs \not\subseteq B$.
\end{enumerate}
\end{definition}

\begin{theorem}[BN\ndash pairs and Bruhat decomposition]

Every complex simple Lie group $G$ has a unique BN\ndash pair structure.
Let $W$ be the Weyl group of $G$. Then 
$$G = \coprod_{w\in W} C(w)\,.$$
\end{theorem}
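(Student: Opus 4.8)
The plan is to establish the two assertions separately: existence/uniqueness of the BN-pair structure, and the Bruhat decomposition as a consequence of it, this being the classical route (as in \cite{AbramenkoBrown08}). First I would recall that a complex simple Lie group $G$ possesses a Borel subgroup $B$ (a maximal solvable subgroup, which up to conjugacy is the connected subgroup with Lie algebra $\mathfrak{h}\oplus\bigoplus_{\alpha>0}\mathfrak{g}_\alpha$ for a fixed choice of positive roots relative to a Cartan subalgebra $\mathfrak{h}$), and take $N$ to be the normalizer of the maximal torus $T$ corresponding to $\mathfrak{h}$. Then $T=B\cap N$ is normal in $N$, and $W=N/T$ is the Weyl group, which by the earlier discussion is a finite Coxeter system $(W,S)$ with $S$ the simple reflections $s_i$ associated to the simple roots. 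This verifies axioms (1) and (2) of the BN-pair directly from the root-space structure.

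The substance is axiom (3), the multiplication rule $C(s)C(w)\subseteq C(w)\cup C(sw)$ for $C(w)=BwB$ (where $w$ is understood via a coset representative in $N$, the double coset being independent of the choice). The standard argument here is rank-one reduction: decompose $B=T\,U$ with $U$ the unipotent radical, write $U=U_s\,(U\cap wUw^{-1})$ or the analogous splitting isolating the root subgroup $U_{\alpha_s}$ attached to the simple root of $s$, and reduce the computation of $BsBwB$ to the $\mathrm{SL}_2$ (or $\mathrm{PGL}_2$) generated by $U_{\pm\alpha_s}$, where the identity $s\,U_{\alpha_s}\,s\subseteq B\,s\,B\cup B$ is a direct $2\times 2$ matrix check. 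Axiom (4), $sBs\not\subseteq B$, is immediate since conjugation by $s$ sends $U_{\alpha_s}$ to $U_{-\alpha_s}\not\subseteq B$. Uniqueness of the BN-pair (up to conjugacy) follows from conjugacy of Borel subgroups together with conjugacy of maximal tori, both already available in the finite-dimensional theory, so any two BN-pairs are carried to one another by an inner automorphism.

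Granting the BN-pair axioms, the Bruhat decomposition is formal. One shows $G=\coprod_{w\in W}C(w)$ in two steps: first, $\bigcup_{w\in W}BwB$ is a subgroup of $G$ — it is visibly closed under inversion since $(BwB)^{-1}=Bw^{-1}B$, and closed under multiplication by axiom (3) together with $B\subseteq\bigcup_w BwB$ and an induction on the length $\ell(w)$ using $C(s)C(w)\subseteq C(w)\cup C(sw)$; since this subgroup contains $B$ and $N$ and $G=\langle B,N\rangle$ by axiom (1), it equals $G$. Second, disjointness: if $BwB=Bw'B$ then an induction on $\min(\ell(w),\ell(w'))$, again via the rank-one relation and axiom (4) to rule out $C(sw)\subseteq C(w)$, forces $w=w'$ in $W$.

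The main obstacle is axiom (3): it is the one step that genuinely uses the internal structure of $G$ rather than soft group theory, and the cleanest execution is the rank-one reduction to $\mathrm{SL}_2$, which requires setting up the root subgroups $U_\alpha$, the commutation relations among them (Chevalley relations), and the normal form for elements of $U$ relative to a simple reflection. I would present this reduction carefully and treat everything downstream of it as bookkeeping. The uniqueness clause, by contrast, is the easiest part, resting entirely on conjugacy theorems already cited above.
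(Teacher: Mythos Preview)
Your outline is correct and follows the standard route: construct $B$ as a Borel subgroup and $N$ as the normalizer of a maximal torus, verify axiom~(3) by rank-one reduction to $\mathrm{SL}_2$ via root subgroups, deduce uniqueness from conjugacy of Borels and tori, and obtain the decomposition formally from the BN-pair axioms by induction on length. The paper itself, however, gives no argument at all --- its entire proof is the citation ``See~\cite{Bump04}, section~30.'' What you have sketched is essentially the content of that reference (Bump develops the Bruhat decomposition precisely through Tits systems and the rank-one computation), so your approach is faithful to the cited source even though it goes well beyond what the paper presents on the page.
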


\begin{proof} See~\cite{Bump04}, section 30. \end{proof}

The Bruhat decomposition encodes the structure of the Tits buildings --- compare section \ref{subsection:spherical_buildings}.

\subsection{Symmetric spaces}

\begin{definition}
A (pseudo-)Riemannian symmetric space $M$ is a pseudo-Riemannian manifold $M$ such that for each $m\in M$ there is an isometry $\sigma_m:M\longrightarrow M$ such that $\sigma_m(m)=m$ and $d\sigma_m|_{T_mM} =-Id$.
\end{definition}

Direct consequences of the definition are that symmetric spaces are geodesically complete homogeneous spaces. Let $I(M)$ denote the isometry group of $M$ and $I(M)_m$ the isotropy subgroup of the point $m\in M$ then $M=I(M)/I(M)_m$. Let $g_m$ denote the metric on $T_mM$. Clearly $I(M)_m\subset O(g_m)$.
Hence for a Riemannian symmetric space, we find that $I(M)_m$ is a closed subgroup of a compact orthogonal group, hence compact.

We formalize those concepts

\begin{definition}[Symmetric pair]
Let $G$ be a connected Lie group, $H$ a closed subgroup. The pair $(G,H)$ is called a symmetric pair if there exists an involutive analytic automorphism $\sigma: G\longrightarrow G$ such that $\left(H_{\sigma}\right)_0 \subset H \subset H_{\sigma}$. Here $H_{\sigma}$ denotes the fixed points of $\sigma$ and $\left(H_{\sigma}\right)_0 $ its identity component. If $Ad_G(H)$ is compact, it is said to be Riemannian symmetric.
\end{definition}

Each symmetric space defines a symmetric pair. Conversely, each symmetric pair describes a symmetric space~\cite{Helgason01}.

\begin{definition}[OSLA]
An orthogonal symmetric Lie algebra is a pair $\mathfrak{g}, s$ such that 
\begin{enumerate}
 \item $\mathfrak{g}$ is a Lie algebra over $\mathbb{R}$,
\item $s$ is an involutive automorphism of $\mathfrak{g}$,
 \item the set of fixed points of $s$, denoted $\mathfrak{k}$, is a compactly embedded subalgebra. 
\end{enumerate}
\end{definition}

Clearly each Riemannian symmetric pair defines an OSLA. The converse is true up to coverings. 

Hence to give a classification of Riemannian symmetric spaces, we just have to classify OSLA's. 

We focus now our attention to the Riemannian case: The most important result is the following:
\begin{theorem}
 Let $M$ be an irreducible Riemannian symmetric space. Then either its isometry group is semisimple or $M=\mathbb{R}$.
\end{theorem}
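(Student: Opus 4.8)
The plan is to analyze the de Rham decomposition of $M$ together with the structure of its isometry group, and to rule out the possibility that the identity component of the isometry group is neither semisimple nor trivial-plus-translations. Since $M$ is a Riemannian symmetric space, it is homogeneous and geodesically complete, so we may write $M = G/K$ where $G = I(M)_0$ is the identity component of the isometry group and $K = G_m$ is the isotropy group at a base point $m$, which is compact as noted in the excerpt. The involution $\sigma_m$ induces an involutive automorphism $s$ of $\mathfrak{g} = \mathrm{Lie}(G)$, giving a Cartan-type decomposition $\mathfrak{g} = \mathfrak{k} \oplus \mathfrak{p}$ into the $(+1)$- and $(-1)$-eigenspaces, with $\mathfrak{k} = \mathrm{Lie}(K)$ and $\mathfrak{p} \cong T_mM$.

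First I would reduce to the algebraic statement about $(\mathfrak{g}, s)$: namely that if the associated orthogonal symmetric Lie algebra is irreducible (meaning $\mathfrak{k}$ acts irreducibly on $\mathfrak{p}$, or equivalently $M$ admits no nontrivial local product decomposition), then either $\mathfrak{g}$ is semisimple or $M$ is flat one-dimensional, $M = \mathbb{R}$. The key structural tool is the decomposition of $\mathfrak{g}$ relative to the radical. Let $\mathfrak{r}$ be the radical of $\mathfrak{g}$. One shows $s(\mathfrak{r}) = \mathfrak{r}$ since the radical is characteristic, so $\mathfrak{r} = (\mathfrak{r} \cap \mathfrak{k}) \oplus (\mathfrak{r} \cap \mathfrak{p})$. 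The compactness of $\mathrm{Ad}_G(K)$ forces $\mathfrak{k}$ to be a compactly embedded subalgebra, which severely restricts how the solvable part can sit inside $\mathfrak{g}$: a standard argument (using that the Killing form is negative semidefinite on a compactly embedded subalgebra, and that $[\mathfrak{p},\mathfrak{p}] \subseteq \mathfrak{k}$, $[\mathfrak{k},\mathfrak{p}] \subseteq \mathfrak{p}$) shows that the abelian part of $\mathfrak{r}$ inside $\mathfrak{p}$ must be a $\mathfrak{k}$-submodule. By irreducibility, $\mathfrak{r} \cap \mathfrak{p}$ is then either all of $\mathfrak{p}$ or zero.

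Then I would treat the two cases. If $\mathfrak{r} \cap \mathfrak{p} = \mathfrak{p}$, then $\mathfrak{p}$ is an abelian ideal and $\dim\mathfrak{p} = 1$ by irreducibility, which yields $M = \mathbb{R}$ (the Euclidean factor case). If $\mathfrak{r} \cap \mathfrak{p} = 0$, then $\mathfrak{r} \subseteq \mathfrak{k}$, so $\mathfrak{r}$ is a compactly embedded solvable ideal; but a compactly embedded solvable ideal of $\mathfrak{g}$ is central (its adjoint action is by skew-symmetric endomorphisms that are also nilpotent modulo a torus, and boundedness of the orbits forces triviality of the action on $\mathfrak{p}$, hence on all of $\mathfrak{g}$ by the bracket relations). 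A central ideal contained in $\mathfrak{k}$ acts trivially on $\mathfrak{p} = T_mM$, hence acts trivially on $M$ by homogeneity, and can therefore be divided out; after doing so $\mathfrak{g}$ has trivial radical, i.e. is semisimple. Passing back from $\mathfrak{g}$ to $G$: the effective isometry group is semisimple.

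The main obstacle, and the step deserving the most care, is the argument that a compactly embedded solvable ideal must be central and hence removable — this is where the Riemannian hypothesis (compactness of $\mathrm{Ad}_G(K)$) is doing all the work, and where one must be careful about the distinction between the full isometry group and its effective quotient, as well as about the difference between $\mathfrak{g}$ being semisimple and $G$ being semisimple as a Lie group. The cleanest route is to cite the structure theory of orthogonal symmetric Lie algebras from Helgason (decomposition into Euclidean, compact, and noncompact types), under which an irreducible OSLA is of exactly one of these types; the Euclidean irreducible type is precisely $\mathbb{R}$, and the compact and noncompact irreducible types both have semisimple $\mathfrak{g}$. So in the write-up I would invoke \cite{Helgason01} for the OSLA decomposition and then simply observe that the irreducible Euclidean case is one-dimensional.
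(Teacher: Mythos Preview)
The paper gives no proof of this theorem; it is stated as a known fact (implicitly deferring to \cite{Helgason01}) and the text moves on immediately to the pseudo-Riemannian contrast. So there is no paper proof to compare against; the question is whether your sketch stands on its own.

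Your radical-based strategy is the right one, but there is a genuine gap in exactly the case where the conclusion $M=\mathbb{R}$ must emerge. You claim that ``$\mathfrak{k}$ acts irreducibly on $\mathfrak{p}$, or equivalently $M$ admits no nontrivial local product decomposition'' --- these are \emph{not} equivalent in the flat case: for $M=\mathbb{R}^n$ with its full isometry group one has $\mathfrak{k}=\mathfrak{so}(n)$ acting irreducibly on $\mathfrak{p}=\mathbb{R}^n$, yet $M$ is a Riemannian product. Hence your step ``$\dim\mathfrak{p}=1$ by irreducibility'' does not follow from isotropy-irreducibility. What actually forces $\dim\mathfrak{p}=1$ is \emph{de Rham} irreducibility: once you know $[\mathfrak{p},\mathfrak{p}]=0$ (and this does follow from $\mathfrak{p}\subset\mathfrak{r}$, since $[\mathfrak{p},\mathfrak{p}]$ then lies both in the nilradical and in the compactly embedded $\mathfrak{k}$, hence is central, hence zero by effectiveness), the curvature vanishes, the holonomy is trivial, and de Rham irreducibility gives $\dim M=1$. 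Your closing fallback --- cite Helgason's decomposition of effective OSLAs --- is exactly the right move and is essentially what the paper is silently doing; just be careful that in Helgason's terminology the ``irreducible'' OSLAs are precisely the compact and noncompact ones (automatically with semisimple $\mathfrak{g}$), while the Euclidean factor is treated separately rather than as an ``irreducible Euclidean type''.
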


In the non\ndash Riemannian case this is no longer true. While the pseudo\ndash Riemannian symmetric spaces with semisimple isometry group are completely classified~\cite{Berger57}, recent results of Ines Kath and Martin Olbricht~\cite{Kath04}, \cite{Kath06} show, that for pseudo-Riemannian symmetric spaces with a non\ndash semisimple isometry group, a classification needs a classification of solvable Lie algebras, which is out of reach.

There are two classes of irreducible Riemannian symmetric spaces with semisimple isometry group: Spaces of compact type and spaces of noncompact type.
\begin{definition}
 A Riemannian symmetric space is of compact type iff its isometry group is a compact semisimple Lie group. It is called of noncompact type if its isometry group is a noncompact semisimple Lie group. 
\end{definition}

\begin{theorem}
 Let $(\mathfrak{g}, s)$ be an orthogonal symmetric Lie algebra and $(L, U)$ a Riemannian symmetric pair associated to $(\mathfrak{g}, s)$:

\begin{enumerate}[i)]
\item If $(L,U)$ is of the compact type, then $L/U$ has sectional curvature $\geq 0$. 
\item If $(L,U)$ is of the noncompact type, then $L/U$ has sectional curvature $\leq 0$. 
\item If $(L,U)$ is of the Euclidean type, then $L/U$ has sectional curvature $= 0$.
\end{enumerate}
\end{theorem}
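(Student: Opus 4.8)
The proof will reduce to a computation in the Lie algebra, so the first step is to set up the algebraic picture. Let $o=eU\in L/U$ be the base point and write $\mathfrak{g}=\mathfrak{k}\oplus\mathfrak{p}$ for the decomposition of $\mathfrak{g}$ into the $(+1)$- and $(-1)$-eigenspaces of $s$; by the definition of an OSLA, $\mathfrak{k}$ is the (compactly embedded) fixed subalgebra, and since $s$ is an automorphism one has $[\mathfrak{k},\mathfrak{k}]\subset\mathfrak{k}$, $[\mathfrak{k},\mathfrak{p}]\subset\mathfrak{p}$, $[\mathfrak{p},\mathfrak{p}]\subset\mathfrak{k}$. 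The differential of $L\to L/U$ identifies $\mathfrak{p}$ with $T_o(L/U)$, and the invariant metric corresponds to an $\mathrm{Ad}(U)$-invariant inner product $\langle\ ,\ \rangle$ on $\mathfrak{p}$; in the two semisimple cases this is the restriction to $\mathfrak{p}$ of $\varepsilon B$, where $B$ is the Killing form of $\mathfrak{g}$ and $\varepsilon=-1$ for the compact type (so that $\varepsilon B$ is positive definite on all of $\mathfrak{g}$) and $\varepsilon=+1$ for the noncompact type (where $B$ is already positive definite on $\mathfrak{p}$).

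Next I would invoke the classical description of the curvature tensor of a symmetric space at the base point,
\[
 R_o(X,Y)Z=-[[X,Y],Z],\qquad X,Y,Z\in\mathfrak{p},
\]
which can be taken from \cite{Helgason01}, or derived from the fact that on a symmetric space the curvature tensor is parallel and fixed by the geodesic symmetries. Since $L/U$ is homogeneous it is enough to estimate $K$ on planes through $o$; for an orthonormal pair $X,Y\in\mathfrak{p}$ this gives $K(X,Y)=\langle R_o(X,Y)Y,X\rangle=-\langle[[X,Y],Y],X\rangle$. Now $[X,Y]\in\mathfrak{k}$, and a two-line manipulation using the $\mathrm{ad}$-invariance of $B$ together with $\langle\ ,\ \rangle=\varepsilon B|_{\mathfrak{p}}$ rewrites this as $K(X,Y)=\varepsilon\, B([X,Y],[X,Y])$.

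The three assertions now follow by reading off the sign of $B$ on $\mathfrak{k}$. For the compact type $\mathfrak{g}$ is compact semisimple, so $B$ is negative definite on all of $\mathfrak{g}$, hence $B([X,Y],[X,Y])\le 0$, and with $\varepsilon=-1$ we get $K(X,Y)\ge 0$. For the noncompact type $\mathfrak{g}=\mathfrak{k}\oplus\mathfrak{p}$ is a Cartan decomposition, so $B$ is negative definite on $\mathfrak{k}$, hence again $B([X,Y],[X,Y])\le 0$, and with $\varepsilon=+1$ we get $K(X,Y)\le 0$. For the Euclidean type $[\mathfrak{p},\mathfrak{p}]=0$ by definition, so $R_o\equiv 0$ and $K\equiv 0$. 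The one genuinely delicate point, and the main obstacle, is keeping the sign conventions straight: the curvature formula, the convention for the sectional curvature, and the correct normalization of the invariant metric (the factor $\varepsilon$) in the compact versus noncompact cases all carry signs and must be matched consistently; once they are fixed, the argument above is forced. I would pin these conventions down once and for all by reference to \cite{Helgason01}.
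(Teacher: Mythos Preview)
Your argument is correct and is precisely the classical proof: identify $T_o(L/U)\cong\mathfrak{p}$, use $R_o(X,Y)Z=-[[X,Y],Z]$, rewrite the sectional curvature via the $\mathrm{ad}$-invariance of the Killing form as $K(X,Y)=\varepsilon\,B([X,Y],[X,Y])$ with $[X,Y]\in\mathfrak{k}$, and then read off the sign of $B|_{\mathfrak{k}}$ in each of the three cases. Your handling of the sign bookkeeping (the factor $\varepsilon$ and the two applications of $\mathrm{ad}$-invariance) is right.

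As for the comparison: the paper does not supply its own proof of this statement. Section~\ref{section:the_finite_dimensional_blue_print} is explicitly a review of the finite dimensional background, and this theorem is quoted from the standard literature (the section's primary reference is \cite{Helgason01}); no argument is given in the text. Your write-up is exactly the proof one finds in \cite{Helgason01}, so there is nothing to contrast---you have simply filled in what the paper leaves to the reference.
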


The Cartan-Hadamard theorem tells us that symmetric spaces of noncompact type are diffeomorphic to a vector space. Hence for every orthogonal symmetric Lie algebra of noncompact type, there is exactly one symmetric space of noncompact type. In contrast the topology of symmetric spaces of compact type is more complicated. As the fundamental group need not be trivial, for one orthogonal symmetric Lie algebra of the compact type there may be different symmetric spaces. There is always a simply connected one, which is the universal cover of all the others. 

Symmetric spaces of compact type and noncompact type appear in duality:
For every simply connected, irreducible symmetric space of the compact type, there is exactly one of the noncompact type and vice versa. 

\begin{example}
Take $(\mathfrak{l}, \mathfrak{u}):=(\mathfrak{so}{(n+1)}, \mathfrak{so}(n))$. 
A Riemannian symmetric pair associated to $(\mathfrak{l}, \mathfrak{u})$ is $(SO(n+1), SO(n))$. The corresponding symmetric space is isomorphic to the quotient $L/U$, hence is a sphere. Another symmetric space associated to $(\mathfrak{so}(n+1), \mathfrak{so}(n))$ is the projective space $\mathbb{R}P(n)$. 
The noncompact dual symmetric space is the hyperbolic space $\mathbb{H}^n=SO(n,1)/SO(n)$.
\end{example}

Besides $\mathbb{R}^n$, there are four classes of Riemannian symmetric spaces, two classes of spaces of compact type and two classes of spaces of noncompact type

\begin{enumerate}
 \item {\bf Type I} consists of coset spaces $G/K$, where $G$ is a compact simple Lie group and $K$ is a compact subgroup satisfying $Fix(\sigma)_0 \subseteq K\subseteq Fix(\sigma)$ for some involution $\sigma$. In this case $(L,U)=(G,K)$
\item {\bf Type II} consists of compact simple Lie groups $G$ equipped with their bi-invariant metric. $(L, U)=(G\times G, \Delta)$, where $\Delta=\{(x,x)\in G\times G\}$ is the diagonal subgroup.
\item {\bf Type III} consists of spaces $G/K$ where $G$ is a noncompact, real simple Lie group and $K$ a maximal compact subgroup. $(L,U)=(G,K)$.
 \item {\bf Type IV} consists of spaces $G_{\mathbb{C}}/G$ where $G_{\mathbb{C}}$ is a complex simple Lie group and $G$ a compact real form. $(L,U)=(G_{\mathbb{C}}, G)$.
\end{enumerate}

\noindent Types {\bf I} and {\bf III} are in duality as are types {\bf II} and {\bf IV}.

\subsection{Polar representations and isoparametric submanifolds}

\begin{definition}[Polar action]
Let $M$ be a Riemannian manifold.  An isometric action $G:M\longrightarrow M$ is called polar if  there exists a complete, embedded, closed submanifold $\Sigma \subset M$, that meets  each orbit orthogonally.
\end{definition}

\begin{definition}[Polar representation]
 A polar representation is a polar action on an Euclidean vector space, acting by linear transformations. 
\end{definition}

\begin{example}[Adjoint representation]
Let $G$ be a compact simple Lie group, $\mathfrak{g}$ its Lie algebra. The adjoint representation is polar. Sections are the Cartan subalgebras.
\end{example}

\begin{example}[s-representation]
 Let $(G, K)$ be a Riemannian symmetric pair, $M=G/K$ the corresponding symmetric space and $m=eK\in M$ (hence $K=I(M)_m$). The action of $K$ on $M$ induces an action on $T_mM$. Let $\mathfrak{g}=\mathfrak{k}\oplus \mathfrak{p}$ be the corresponding Cartan decomposition (i.e.\ $\mathfrak{k}=Lie(K)$. Then $\mathfrak{p}\cong T_mM$.  Using this isomorphism, we get an action of $K$ on $\mathfrak{p}$. This action, called the $s$\ndash representation of $M$, is polar. Each Abelian subalgebra in $\mathfrak{p}$ is a section.
\end{example}

\begin{theorem}[Dadok]
 Every polar representation on $\mathbb{R}^n$ is orbit equivalent to an $s$-representation.
\end{theorem}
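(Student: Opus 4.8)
The plan is to reduce the classification of polar representations to a finite case check, exploiting two structural features of a polar representation $\rho\colon G\to O(V)$: it carries a generalized Weyl group which is a finite reflection group, and its algebra of invariants is a polynomial algebra. I may assume $G\subseteq O(V)$ is a closed, hence compact, subgroup and that $\rho$ has no trivial summand (split off $V^G$). Fix a section $\Sigma$ of dimension $k=\operatorname{cohom}(\rho)$ and set $W:=N_G(\Sigma)/Z_G(\Sigma)$, acting on $\Sigma\cong\mathbb R^k$. Using that the slice representation of the isotropy group $G_p$ at any point $p$ is again polar --- with section the orthogonal complement of $T_p(G\cdot p)$ inside $\Sigma$ --- one argues by induction on $\dim V$ that $W$ is finite and generated by the reflections in the singular hyperplanes of $\Sigma$ (the loci where the orbit type drops). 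Thus $(W,\Sigma)$ is a finite Coxeter representation, and the orbit map induces a homeomorphism $V/G\cong\Sigma/W$.

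Next comes the invariant-theoretic input: restriction of polynomial functions defines an isomorphism $\mathbb R[V]^G\xrightarrow{\ \sim\ }\mathbb R[\Sigma]^W$. Since $W$ is a finite reflection group, $\mathbb R[\Sigma]^W$ is a polynomial algebra on $k$ homogeneous generators (Chevalley--Shephard--Todd), so $\rho$ is coregular, with exactly $k=\operatorname{cohom}(\rho)$ basic invariants whose degrees are the degrees of the reflection group $W$ --- a severe restriction. On the other side, the $s$\ndash representation example above shows that the isotropy representation of any symmetric space $G/K$ is polar, with section a maximal abelian subspace $\mathfrak a\subseteq\mathfrak p$ and generalized Weyl group precisely the restricted Weyl group of $G/K$; moreover the class of $s$\ndash representations is closed under direct sums, matching products of symmetric spaces. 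Combined with a reduction --- itself not formal --- to the case where $\rho$ is irreducible (a polar representation decomposes into irreducible polar pieces whose sections add up), it then suffices to realize every irreducible polar representation, up to orbit equivalence, by an irreducible $s$\ndash representation.

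Finally one runs through the known classification of irreducible orthogonal representations of compact connected groups (simple, or simple times a torus). The coregularity condition, together with the requirement that $W$ be a Coxeter group acting with the prescribed degrees, eliminates all but a short list: the cohomogeneity\ndash one representations (those transitive on spheres, all orbit equivalent to $SO(n)\curvearrowright\mathbb R^n$, the $s$\ndash representation of $S^n$); the adjoint and Cartan\ndash type representations (e.g.\ $SU(n)$ acting by conjugation on traceless Hermitian matrices, the $s$\ndash representation of $SL(n,\mathbb C)/SU(n)$); the representations $SO(p)\times SO(q)\curvearrowright\textrm{Mat}^{p\times q}(\mathbb R)$; and the remaining spin and exceptional cases. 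For each survivor one then exhibits by hand a symmetric space whose isotropy representation has the same orbits.

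I expect the real content, and the main obstacle, to lie in this last step: one must not only winnow the representation list but verify \emph{genuine orbit equivalence} --- not merely a homeomorphism of orbit spaces or an isomorphism of invariant rings, which is strictly weaker --- for each remaining representation, and rule out the near misses (coregular irreducible representations that fail to be polar, and would-be Weyl groups that are not crystallographic). A more conceptual route that bypasses part of this bookkeeping is the Eschenburg--Heintze argument: rather than classifying the representations, classify directly the possible local data $(\Sigma,W,\text{wall multiplicities})$, using the polarity of all slice representations to force this data --- equivalently the root system with multiplicities --- to be that of a symmetric space.
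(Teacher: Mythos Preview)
The paper does not prove this theorem. It is stated as a known result attributed to Dadok and left without proof; the surrounding section is a survey of the finite dimensional blueprint, and Dadok's theorem is quoted only to motivate the link between polar representations, isoparametric submanifolds, and symmetric spaces. There is therefore nothing to compare your argument against on the paper's side.

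As for your sketch itself: the overall architecture --- extract the generalized Weyl group acting on a section, invoke Chevalley restriction and Chevalley--Shephard--Todd to force coregularity with prescribed degrees, reduce to the irreducible case, and then match against the list of irreducible $s$\nobreakdash-representations --- is faithful to Dadok's original strategy. You are also right to flag the two genuinely delicate points: the reduction to irreducible polar summands is not a formality (a polar representation need not split into \emph{irreducible} polar pieces in the naive way; one has to control how the sections sit relative to the decomposition), and the final case check requires establishing actual orbit equivalence, not just an isomorphism of invariant rings. The Eschenburg--Heintze alternative you mention does indeed shortcut much of the case analysis by working directly with the marked affine diagram (section, Weyl group, multiplicities) and showing it must come from a symmetric space. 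Either route is acceptable, but since the paper offers no proof, your write-up stands on its own and should cite Dadok's original paper (and optionally Eschenburg--Heintze) rather than the present survey.
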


\noindent The orbits of polar actions have an interesting geometric structure: We define

\begin{definition}[Isoparametric submanifold]
 A submanifold $S\subset V$ is called isoparametric if $S$ has a flat normal bundle and constant principal curvatures.
\end{definition}

\begin{theorem}
Principal orbits of a polar representation is an isoparametric submanifold.
\end{theorem}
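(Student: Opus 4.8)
The plan is to build an explicit global parallel frame of the normal bundle of the orbit out of the section and the equivariance, and then to read off both flatness of the normal bundle and constancy of the principal curvatures. Concretely, since a polar representation is an isometric linear action, I would realise $G$ as a closed subgroup of $O(V)$, fix a section $\Sigma$ (which for a polar representation is a linear subspace, meeting every orbit and doing so orthogonally at regular points), and take a principal orbit $S=G\act p$ with $p\in\Sigma$ a regular point; recall that then the normal space of the orbit at $p$ is exactly the section, $\nu_pS=\Sigma$. Since $p$ is regular the principal isotropy group $G_p$ acts trivially on the slice $\nu_pS=\Sigma$, so for each $\xi\in\Sigma$ the rule $\widetilde\xi_{g\act p}:=g_\ast\xi$ yields a well\ndash defined $G$\ndash equivariant normal field $\widetilde\xi$ along $S$, and as $\xi$ runs through $\Sigma$ these span $\nu S$ fibrewise.

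The crux is to show that every such $\widetilde\xi$ is parallel for the normal connection $\nabla^\perp$. I would differentiate $t\mapsto\widetilde\xi_{\exp(tX)\act p}$ at $t=0$ for $X\in\mathfrak g$, obtaining $D_{X^\ast}\widetilde\xi=X\act\xi$ for the ambient flat derivative $D$, where $X\act\xi$ is the infinitesimal action of $X$ on the vector $\xi\in V$ and $X^\ast_p=X\act p$ ranges over all of $T_pS$. Hence $\nabla^\perp_{X^\ast}\widetilde\xi$ is the orthogonal projection of $X\act\xi$ onto $\nu_pS=\Sigma$, and the whole point is to check $X\act\xi\perp\Sigma$. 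For $\eta\in\Sigma$, skew\ndash symmetry of the $O(V)$\ndash action gives $\langle X\act\xi,\eta\rangle=-\langle\xi,X\act\eta\rangle$; if $\eta$ is regular then $X\act\eta\in T_\eta(G\act\eta)$ is orthogonal to the section $\Sigma$, hence to $\xi$, so the expression vanishes, and as regular points are dense in $\Sigma$ and $\eta\mapsto\langle\xi,X\act\eta\rangle$ is linear it vanishes for all $\eta\in\Sigma$. Therefore $\nabla^\perp\widetilde\xi=0$ at $p$, and equivariance --- each $g\in G$ acts on $V$ as an isometry preserving $S$ and carrying $\widetilde\xi$ to itself, hence preserving $\nabla^\perp$ --- propagates this to all of $S$.

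With a global parallel normal frame $\{\widetilde\xi_i\}$ at hand, the normal curvature tensor $R^\perp$ vanishes identically, so $S$ has flat normal bundle, and every parallel normal field of $S$ is a constant\ndash coefficient combination $\sum_i c_i\widetilde\xi_i$. For the principal curvatures I would use that for $g\in G$ the ambient isometry $g$ maps $S$ to $S$, $p$ to $g\act p$ and $\xi$ to $\widetilde\xi_{g\act p}$, hence intertwines the shape operators, $A_{\widetilde\xi_{g\act p}}=g_\ast\,A_{\widetilde\xi_p}\,g_\ast^{-1}$ on $T_{g\act p}S=g_\ast T_pS$. Since $G$ acts transitively on the principal orbit $S$, the eigenvalues of $A_{\widetilde\xi}$, and so those of $A_{\sum_i c_i\widetilde\xi_i}$ for arbitrary constants $c_i$, are the same at every point of $S$. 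Thus $S$ has flat normal bundle and constant principal curvatures, i.e.\ it is isoparametric.

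I expect the main obstacle to be the parallelism step in the second paragraph: this is where the defining property of the section (orbits are met orthogonally) has to be coupled with the orthogonality of the representation, and where one must be careful about the passage from regular to arbitrary points of $\Sigma$, about the linearity of $\Sigma$, and about the triviality of the slice representation at a principal point (equivalently, the well\ndash definedness of the equivariant normal fields). Once parallelism is secured, flatness of the normal bundle and constancy of the principal curvatures follow formally, the latter being essentially just the homogeneity of $S$ under $G$.
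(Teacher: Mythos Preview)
Your proof is correct and is precisely the ``direct verification'' the paper alludes to without spelling out: you construct the equivariant normal fields from the section, prove their $\nabla^\perp$\ndash parallelism via the orthogonality of the section and skew\ndash symmetry of the action, and then read off flatness of the normal bundle and constancy of principal curvatures from transitivity. The paper gives no further details beyond citing that the result follows by direct verification, so there is nothing to contrast --- your argument is the standard one (as in \cite{PalaisTerng88} or \cite{Berndt03}) and your identification of the parallelism step as the only nontrivial point is accurate.
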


This result can be proven by a direct verification. Conversely we have:

\begin{theorem}[Thorbergsson]
Each full irreducible isoparametric submanifold of $\mathbb{R}^n$ of rank at least three is an orbit of an $s$-representation.
\end{theorem}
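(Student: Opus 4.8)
The strategy is to prove first that such an $M$ must be \emph{homogeneous}, i.e.\ an orbit of a subgroup of the isometry group of $\mathbb{R}^n$, and then to identify the resulting action --- via Dadok's theorem --- with that of an $s$-representation. Almost all of the content lies in the homogeneity statement, and the hypothesis ``rank at least three'' enters precisely there: the argument runs through Tits' classification of thick spherical buildings, which is available only in rank $\ge 3$, and, consistently with this, there do exist full irreducible inhomogeneous isoparametric submanifolds of rank $2$ (arising from the Ferus--Karcher--M\"unzner examples of isoparametric hypersurfaces in spheres), so the bound is sharp.

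First I would recall the local structure. For a full isoparametric $M\subset\mathbb{R}^n$ of rank $k$ the normal bundle is globally flat of rank $k$; in each affine normal space the curvature normals cut out finitely many focal hyperplanes whose reflections generate a finite irreducible Coxeter group $W$ of rank $k$, and to each face of a chamber of $W$ there corresponds a \emph{focal manifold}, obtained by collapsing the associated curvature spheres. Since $M$ is isoparametric every curvature sphere has multiplicity $m\ge 1$. Following Thorbergsson, I would then assemble the focal manifolds into a simplicial complex $\Delta(M)$ whose chambers are the points of $M$, whose simplices are the focal manifolds of the various types, and whose incidence is inclusion of the corresponding collapsing maps. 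One verifies the building axioms as follows: the slice theorem --- a slice of an isoparametric submanifold is again isoparametric, of lower rank --- identifies the link of each simplex with the Coxeter complex of a parabolic subgroup of $W$; gallery-connectedness and the exchange condition are read off from the combinatorics of the parallel foliation of $M$ together with the structure of the Coxeter complex of $W$; and thickness is automatic because the multiplicities are $\ge 1$, so each panel lies in a whole sphere $S^m$ of chambers. Moreover $\Delta(M)$ inherits a compatible compact, connected topology. Checking all the building axioms --- above all the exchange condition --- in this purely geometric setting is the first and main technical obstacle.

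Next I would invoke Tits' classification of thick irreducible spherical buildings of rank $\ge 3$: $\Delta(M)$ is the building of an absolutely simple algebraic group, and the topological refinement (Burns--Spatzier, together with the theory of topological Moufang buildings) forces it to be the building attached to a simple real Lie group $G$, equivalently the building of an irreducible Riemannian symmetric space. The Moufang property supplies root groups, and hence an abundance of automorphisms of $\Delta(M)$ acting transitively on chambers. The second delicate point is to integrate these abstract building automorphisms to genuine isometries of $\mathbb{R}^n$ preserving $M$, using the realisation of $\Delta(M)$ by the focal manifolds; granting this, the group so obtained acts transitively on the chambers of $\Delta(M)$, i.e.\ on the points of $M$, so $M$ is homogeneous. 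An alternative route to homogeneity, bypassing buildings altogether, is Olmos's argument via the normal holonomy theorem; the outcome is the same and the endgame below is unchanged.

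Finally, a homogeneous isoparametric submanifold $M=H\cdot p$ is an orbit of a polar representation: the affine normal space at $p$ is a section, since it meets every $H$-orbit and does so orthogonally, $H$ acting by isometries with flat normal bundle along $M$. By Dadok's theorem this polar representation is orbit equivalent to an $s$-representation, and since orbit equivalence matches up the orbit decompositions, $M$ is an orbit of an $s$-representation, as claimed.
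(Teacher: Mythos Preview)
Your proposal is correct and follows precisely the approach the paper itself cites: the paper does not give a self-contained proof but points to Thorbergsson's original argument (associate a spherical building to the isoparametric submanifold and invoke Tits' classification in rank $\geq 3$) and to Olmos's alternative proof via normal holonomy, both of which you outline accurately, including the endgame through Dadok's theorem. Your sketch is in fact considerably more detailed than what the paper provides.
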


The original proof by Gudlaugur Thorbergsson \cite{Thorbergsson91} proceeds by associating a Tits building to any isoparametric submanifold, relying on the fact that spherical buildings of rank at least three are classified~\cite{Thorbergsson91}. A second proof of Carlos Olmos proceeds by showing homogeneity~\cite{Berndt03}.

Hence most isoparametric submanifolds are homogeneous. Nevertheless, there are some examples in codimension $2$ which are non-homogeneous. A complete classification is still missing.

\subsection{Spherical buildings}
\label{subsection:spherical_buildings}

\subsubsection{Foundations}
There are several equivalent definitions of a building~\cite{AbramenkoBrown08}. For us the most convenient one is the $W$\ndash metric one:

\begin{definition}[Building - $W$\ndash metric definition]
\label{wmetricbuilding}
Let $(W,S)$ be a Coxeter system. A building of type $(W,S)$ is  a pair $(\mathcal{C}, \delta)$  consisting of a nonempty set $\mathcal{C}$ whose elements are called chambers together with a map $\delta: \mathcal{C}\times \mathcal{C}\longrightarrow W$, called the Weyl distance function, such that for all $C,D\in \mathcal{C}$ the following conditions hold: 
\begin{enumerate}
	\item $\delta(C, D)=1$ iff $C=D$.
  \item If $\delta(C,D)= w $ and $C'\in \mathcal{C}$ satisfies $\delta(C', C)=s\in S$ then $\delta(C',D)=sw$ or $w$. If in addition $l(sw)=l(w)+1$ then $\delta(C', D)=sw$.
	\item If $\delta(C,D)=w$ then for any $s\in S$ there is a chamber $C' \subset \mathcal{C}$, such that $\delta(C', C)=s$ and $\delta(C', D)=sw$.
\end{enumerate}
\end{definition}

\begin{example}
Let $G_{\mathbb{C}}$ be a complex simple Lie group. Set $\mathcal{C}=G_{\mathbb{C}}/B$ and define $\delta(f,g)=w$ iff $f^{-1}g\subset BwB$ in the Bruhat decomposition. Then $(\mathcal{C}, \delta)$ is a spherical building. 
\end{example}

\noindent Hence chambers are in bijection to Borel subgroups $B$.
This definition is equivalent to the standard definition of a building as a simplicial complex ---  for a proof see~\cite{AbramenkoBrown08}:

\begin{definition}[Building - chamber complex definition]
\label{building}
A building $\mathfrak{B}$ is a thick chamber complex  $\Sigma$ together with a set $\mathcal{A}$ of thin chamber complexes $A\in \mathcal{A}$, called apartments, satisfying the following axioms:
\begin{enumerate}
	\item For every pair of simplices $x,y \in \Sigma$ there is an apartment $A_{x,y}\subset \mathcal{A}$ containing both of them.
	\item Let $A$ and $A'$ be apartments, $x$ a simplex and $C$ a chamber such that $\{x, C\}\subset A\cap A'$. Then there is a chamber complex isomorphism $\varphi: A \longrightarrow A'$ fixing $x$ and $C$ pointwise.
\end{enumerate}
\end{definition}

In this definition the finite reflection groups appear via their action on the apartments: 

\noindent To each Coxeter System $(W,S)$ one can associate a simplicial complex $C$ of dimension $|S|-1$, called the Coxeter complex of type $(W,S)$.

 We start with $(W,S)$: Let $\mathcal{S}$ denote the power set of $S$ and define a partial order relation on $\mathcal{S}$ by $ S' < S'' \in \mathcal{S}$ iff $ (S')^c \subset (S'')^c$ as subsets of $S$. Here $(S')^c$ denotes the complement of $S'$ in $S$. Now construct a simplicial complex $\Sigma(S)$ associated to $\mathcal{S}$ by identifying a set $S'\subsetneq \mathcal{S}$ with a simplex $\sigma(S')$ of dimension $|S'^c|-1$ and defining the boundary relations of $\Sigma(S)$ via the partial order on $\mathcal{S}$: $\sigma(S')$ is in the boundary of $\sigma(S'')$ iff $S' <S''$. 
In this simplicial complex $\emptyset= (S)^c$ corresponds to a simplex of maximal dimension and the $|S|$ sets consisting of the single elements $s_i \in S$ correspond to faces.

The simplicial complex $\Sigma(W,S)$ consists of all $W$\ndash translates of elements in $\Sigma(S)$. Its elements $\sigma(w, S')$ correspond to pairs consisting of an element $w\in W$ and an element $S'\subset \mathcal{S}$ subject to the equivalent relation $\sigma(w_1, S'_1)\simeq \sigma(w_1, S'_1)$ iff $S'_1=S'_2$ and $w_1\cdot \langle S'_1\rangle= w_2 \cdot \langle S'_2 \rangle$. It carries a natural $W$\ndash action that is transitive on simplices of maximal dimension (called chambers). For a simplex $\sigma (w, \{s_i, i\in S'\})$, the stabilizer subgroup is $wW_{S'}w^{-1}$ as elements in $W_{S'}$ stabilize $\{s_i, i\in S'\}$.

As a simplicial complex the Coxeter complex is independent of the choice of $S$. 

The action of elements $s_i \in S$ on $\Sigma(W,S)$ can be interpreted geometrically as reflections at the faces $\sigma(e, s_i)$ of $\sigma(e, \emptyset)$, where $e$ denotes the identity element of $W$. For further details we refer to~\cite{Davis08}.

A chamber complex is a simplicial complex satisfying two taming properties: first it is required that every simplex is contained in the boundary of a simplex of maximal dimension  and second that for every pair of simplices $x$ and $y$ there exists a sequence of simplices of maximal dimension $S:=\{z_1, \dots, z_n\}$ such that $x\in z_1$, $y\in z_n$ and $z_i \cap z_{i+1}$ contains a codimension $1$ simplex. Simplices of maximal dimension are called chambers; simplices of codimension $1$ are called walls. A sequence $S$ is called a gallery connecting $x$ and $y$.

\noindent A chamber complex will be called thin if every wall is a face of exactly $2$ chambers. It will be called thick if every  wall is a face of at least $3$ chambers.

\noindent A chamber complex map $\varphi: A\longrightarrow A'$ is a map of simplicial complexes, mapping $k$\ndash simplices onto $k$\ndash simplices and respecting the face relation. It is a chamber complex isomorphism iff it is bijective.

\subsubsection{Buildings and polar actions}

Let $G$ be a real simple Lie group of compact type and $\mathfrak{g}$ its Lie algebra.
To understand how buildings fit into our picture, we define a $G$\ndash equivariant embedding into the Lie algebra. 

\noindent Start with the adjoint action:

$$\varphi: G \times \mathfrak{g} \longrightarrow \mathfrak{g}, \hspace{15pt}(g,X) \mapsto gXg^{-1}\, . $$
We want first to restrict the domain of definition to a fundamental domain for this action.

\noindent As $\mathfrak{g}$ is covered by conjugate maximal Abelian subalgebras $\mathfrak{t}$, the map
$$\varphi: G \times \mathfrak{t} \longrightarrow \mathfrak{g}, \hspace{15pt}(g,X) \mapsto gXg^{-1}$$
is surjective. As $T:=\exp{\mathfrak{t}}$ acts trivially on $\mathfrak{t}$, 
$$\varphi: G/T \times \mathfrak{t} \longrightarrow \mathfrak{g}, \hspace{15pt}(gT, X) \mapsto gXg^{-1}$$ is well defined and surjective. 
Let $\Delta\subset \mathfrak{t}$ be a fundamental domain for the action of the Weyl group $W:=N(T)/T \subset G/T$ on $\mathfrak{t}$. Then the map
$$\varphi: G/T \times \Delta_G \longrightarrow \mathfrak{g}, \hspace{15pt} (gT,X) \mapsto gXg^{-1}$$
 is again surjective.

For regular elements, i.e.\ all those in the interior $\Delta$, this map is injective~\cite{BroeckertomDieck85}. For all elements $Y$ in the boundary, the stabilizer subgroups are generated by the reflections $s_i\subset W$ fixing $Y$.  For an element $X \subset \Delta_{\mathfrak{g}}$ in the intersection of the faces $\Delta_{i_1}, \dots, \Delta_{i_k}, i\in I$  the stabilizer is $W_I=\langle S_I\rangle$.

As the adjoint action preserves the Cartan-Killing form $B(X,Y)$, it is an isometry with respect to the induced metric $\langle X, Y\rangle = - B(X,Y)$. Hence it preserves every sphere $S^n_R\subset \mathfrak{g}$ of fixed radius $R$. Thus we can define an $\textrm{Ad}$-invariant subspace $\mathfrak{g}_R:= \mathfrak{g}\cap S^{m}_R\subset \mathfrak{g}$. A fundamental domain for the Adjoint action on $\mathfrak{g}_R$ is the set $\Delta_{(\mathfrak{g},R)}:= \Delta_{\mathfrak{g}}\cap S^n_R$.

\noindent Accordingly we get a surjective map:
$$ \textrm{Ad}(G/T): \Delta_{(\mathfrak{g},R)}\mapsto \mathfrak{g}_R, \hspace{15pt} (g,X) \mapsto g X g^{-1}.$$

Now construct a simplicial complex like that:
For each element $X\in \Delta_{(\mathfrak{g}, R)}$ there is a subgroup $W_X\subset W$ stabilizing $X$. If $X$ is in the interior, then $W_X=\{e\}$. If it is in the boundary, $W_X$ is the group $W_I\subset W$ generated by the reflections $s_i$ that fix $X$. Thus we can replace $\Delta_{(\mathfrak{g}, R)}$ by the complex $\Sigma(S)$. It has exactly $n:=\textrm{rank } \mathfrak{g}$ faces $f_1, \dots, f_n$. Let $S=\{s_i\}, i = 1,\dots, n$ where $s_i$ denotes the reflection at $f_i$. $(W,S)$ is a Coxeter system. The cells in $\Delta_{(\mathfrak{g}, R)}$ correspond bijectively to subsets $S' \subset S$.

 The $W$\ndash translates $\Sigma(S)$  tessellate  $\mathfrak{t}_R:=\mathfrak{t}\cap S^n_R$; thus the tessellation of $\mathfrak{t}_R$ corresponds to the thin Coxeter complex $\Sigma(W,S)$. The $G/T$\ndash translates tessellate the whole sphere $\mathfrak{g}_R$. We get a simplicial complex

$$\mathfrak{B}_G:= (G/T \times \Delta)/\sim\,,$$ 

\noindent By the Iwasawa decomposition we get $B:= T \times A \times N$ and $G/T:= G_{\mathbb C}/B$. Hence:

$$\mathfrak{B}_G:= (G_{\mathbb C}/B \times \Delta)/\sim $$

Our simplicial complex is the building for $G$ and we get an embedding of the building $\mathfrak{B}_G:= (G/T \times \Delta)/\sim$ into the sphere $S_R$.

Moreover let $h \in G$ act on the building $\mathfrak{B}_G$ by left multiplication. Then the following diagram commutes:
\vspace{-1ex}
\begin{displaymath}
\begin{xy}
  \xymatrix{
      \mathfrak{B}_G \ar[r]^h \ar[d]_i    &   \mathfrak{B}_G  \ar[d]^i  \\
      \mathfrak{g}_R \ar[r]^{\textrm{Ad}(h)}             &   \mathfrak{g}_R  
  }
\end{xy}
\end{displaymath}

\noindent The description $\mathfrak{B}_G:= (G_{\mathbb C}/B \times \overline{\Delta})/\sim $ shows that $G_{\mathbb{C}}$ acts from the left on the  building $\mathfrak{B}_G= (G_{\mathbb C}/B \times \Delta)/\sim$.

\noindent In this complex description chambers correspond exactly to Borel subgroups, hence we find:

\begin{theorem}
Let $G_{\mathbb{C}}$ be a simple Lie group of type $X_l$ and $\mathfrak{B}$ the building of the same type.
\begin{enumerate}
\item The chambers of $\mathfrak{B}$ correspond to the Borel subgroups of $G_{\mathbb{C}}$, simplices of $\mathfrak{B}$ correspond to parabolic subgroups. The correspondence can be realized by associating to every simplex $c\in \mathfrak{B}$ its stabilizer subgroup $P_c$ in $G_{\mathbb{C}}$ with respect to the left action, described above. 

\item A simplex $c$ is in the boundary of a cell $d$ iff $P_d\subset P_c$.  
\end{enumerate}
\end{theorem}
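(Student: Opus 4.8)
The plan is to unwind the explicit model $\mathfrak B_G=(G_{\mathbb C}/B\times\Delta)/\!\sim$ constructed above and, for an arbitrary simplex, to compute its stabiliser under the left $G_{\mathbb C}$-action, matching the result against the parabolic subgroups. For a cell $c$ of the model simplex $\Delta$ (equivalently of the Coxeter complex $\Sigma(W,S)$) write $W_c\subseteq W$ for its pointwise stabiliser --- generated by the reflections in the walls of $\Delta$ meeting $c$ --- and put $P_c:=BW_cB=\bigsqcup_{w\in W_c}BwB$; by the Bruhat decomposition and the $BN$-pair axioms this is a subgroup of $G_{\mathbb C}$ containing $B$, and as $c$ runs over the faces of the fundamental chamber the $W_c$ are exactly the standard sub-Coxeter groups $W_{S'}$ ($S'\subseteq S$) and the $P_c$ exactly the standard parabolics, with $P_c=B$ when $c$ is the chamber. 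The gluing relation $\sim$ is arranged so that $[(g_1B,c)]=[(g_2B,c)]$ iff $g_1P_c=g_2P_c$, and chambers of $\mathfrak B_G$ are just points of $G_{\mathbb C}/B$.

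To establish (1), I would note that the left action reads $h\cdot[(gB,c)]=[(hgB,c)]$, so the stabiliser of the simplex $[(gB,c)]$ is $\{h:hgP_c=gP_c\}=gP_cg^{-1}$. Since $B\subseteq P_c$ this contains the Borel $gBg^{-1}$, hence is parabolic; for a chamber it is exactly $gBg^{-1}$, and as all Borels are conjugate to $B$ every Borel occurs this way. Conversely any parabolic $P$ contains a Borel, which may be conjugated to $B$, and the overgroups of $B$ in a group with a $BN$-pair are precisely the standard parabolics $P_c$ --- a direct consequence of the Bruhat decomposition --- so $P=P_c=\mathrm{Stab}([(eB,c)])$ for some $c$. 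Injectivity of $[(gB,c)]\mapsto gP_cg^{-1}$ follows by $G_{\mathbb C}$-equivariance once one checks that $gP_{c'}g^{-1}=P_c$ forces $g\in P_{c'}$ and $P_c=P_{c'}$ with $c=c'$: the first because $g^{-1}Bg$ is then a Borel inside $P_{c'}$, hence conjugate to $B$ within $P_{c'}$, which places $g$ in $N_{G_{\mathbb C}}(B)\cdot P_{c'}=P_{c'}$; the second because $W_c=\{w\in W:BwB\subseteq P_c\}$ is recovered from $P_c$ by disjointness of Bruhat cells, and $c$ is recovered from $W_c$ as the face of the fundamental chamber fixed pointwise by $W_c$.

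For (2), I would reduce to a computation in a single apartment. If $c$ is in the boundary of $d$, both lie in a common apartment (building axiom); since the $G_{\mathbb C}$-action is type-preserving and transitive on apartments we may take this to be the standard one with $d=[(eB,d_0)]$, $d_0$ a face of the fundamental chamber, whence $c=[(wB,c_0)]$ with $w\in W_{d_0}$, i.e.\ $c=[(eB,c_0)]$ with $c_0$ a face of $d_0$ in $\Delta$. Passing to a face enlarges the pointwise stabiliser, so $W_{c_0}\supseteq W_{d_0}$ and $P_c=BW_{c_0}B\supseteq BW_{d_0}B=P_d$; this reproduces the boundary relation recorded for $\Sigma(S)$ above. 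For the converse, $P_d\subseteq P_c$ forces every chamber whose Borel lies in $P_d$ (hence in $P_c$) to contain both $c$ and $d$, and since a simplex is the face common to all chambers containing it this gives $c\subseteq\overline d$, i.e.\ $c$ in the boundary of $d$ (strictly so when the inclusion $P_d\subseteq P_c$ is proper). Conjugating by a general $g$ preserves all inclusions, giving the statement.

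The main obstacle is not this bookkeeping but the fact underlying it: that the geometrically realised complex $(G_{\mathbb C}/B\times\Delta)/\!\sim$ sitting in the sphere $\mathfrak g_R$ really is the Tits building of $G_{\mathbb C}$ --- equivalently, that the overgroups of $B$ are exactly the $P_c=BW_cB$ and that the gluing $\sim$ is the correct one. This rests entirely on the Bruhat decomposition and the $BN$-pair axioms already established, so once those are invoked parts (1) and (2) are formal consequences; the only secondary subtlety, that a simplex is determined by its stabiliser, was disposed of above using disjointness of Bruhat cells and the flag property of buildings.
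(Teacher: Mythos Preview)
The paper does not actually supply a proof of this theorem: it is stated as a summary of the preceding construction of $\mathfrak B_G=(G_{\mathbb C}/B\times\Delta)/\!\sim$ and its embedding into $\mathfrak g_R$, with the implicit understanding that the claims are standard consequences of the $BN$-pair formalism. Your write-up is correct and is exactly the argument the paper's construction points to --- computing stabilisers in the explicit model via the Bruhat decomposition and reading off the inclusion-reversing correspondence from the containments $W_{d_0}\subseteq W_{c_0}$ of standard Coxeter subgroups.

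One minor point: in your injectivity step you invoke $N_{G_{\mathbb C}}(B)\cdot P_{c'}=P_{c'}$, which is fine since $N_{G_{\mathbb C}}(B)=B$ in a group with a $BN$-pair, but you may as well say that directly. Otherwise the bookkeeping is clean and nothing is missing.
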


\begin{remark}
It is an important observation that in a simple complex Lie group all Borel subgroups are conjugate. In building theoretical language this translates to the fact that the building is a connected simplicial complex.
\end{remark}

The association of buildings to symmetric spaces can be done in the same way as we did it for Lie groups, with the adjoint representation replaced by the isotropy representation~\cite{Helgason01}, \cite{Mitchell88}, \cite{Eberlein96}). A good survey for applications of buildings in finite dimensional geometry is \cite{Ji06}.

\section{Kac-Moody groups and their Lie algebras}

\subsection{Geometric affine Kac-Moody algebras}
\label{section:geometric_affine_Kac-Moody_algebras}

The classical references for (algebraic) Kac-Moody algebras are the books \cite{Kac90} and \cite{Moody95}. We will encounter algebraic Kac-Moody algebras and various analytic completions. In some cases of ambiguity we use the denomination algebraic Kac-Moody algebra for the classical Kac-Moody algebras.

\begin{definition}[affine Cartan matrix]
An affine Cartan matrix $A^{n\times n}$ is a square matrix with integer
coefficients, such that
\begin{enumerate}
  \item $a_{ii}=2$ and $a_{i\not=j}\leq 0 \label{geometricconditionaffine}$.
  \item $a_{ij}=0 \Leftrightarrow a_{ji}=0\label{liealgebraconditionaffine}$.
  \item There is a vector $v>0$ (component wise) such that $Av=0$.
\end{enumerate}
\end{definition}

\begin{example}[$2\times 2$\ndash affine Cartan matrices]
There are -- up to equivalence -- two different $2$\ndash dimensional affine Cartan matrices:
$$\left(\begin{array}{cc} 2&-2\\-2&2  \end{array}\right),
\left(\begin{array}{cc} 2&-1\\-4&2  \end{array}\right)\,.$$

\noindent They correspond to the non-twisted algebra
$\tilde{A}_1$ and the twisted algebra $\tilde{A}_1'$.

\end{example}

\begin{enumerate}
\item The indecomposable non-twisted affine Cartan matrices are
$$\widetilde{A}_n, \widetilde{B}_n, \widetilde{C}_n, \widetilde{D}_n, \widetilde{E}_6, \widetilde{E}_7, \widetilde{E}_8, \widetilde{F}_4, \widetilde{G}_2\,.$$
Every non-twisted affine Cartan matrix $\widetilde{X}_l$  can be constructed from a (finite) Cartan matrix $X_l$ by the addition of a further line and column. The denomination as ``non-twisted'' points to the explicit construction as loop algebras.
\item The indecomposable twisted affine Cartan matrices are
$$ \widetilde{A}_1', \widetilde{C}_l', \widetilde{B}_l^t, \widetilde{C}_l^t, \widetilde{F}_4^t, \widetilde{G}_2^t\,.$$
The Kac-Moody algebras associated to them can be constructed as fixed point algebras of certain automorphisms $\sigma$ of a non-twisted Kac-Moody algebra $X$. This construction suggests an alternative notation describing a twisted Kac-Moody algebra by the order of $\sigma$ and the type of $X$.
This yields the following equivalences:

\begin{displaymath}
  \begin{array}{ccl}
  \widetilde{A}_1'   & \hspace{30pt} & ^2\widetilde{A}_{2}\\
  \widetilde{C}_l'   & \hspace{30pt} & ^2\widetilde{A}_{2l}, l\geq 2\\
  \widetilde{B}_l^t  & \hspace{30pt} & ^2\widetilde{A}_{2l-1}, l\geq 3\\
  \widetilde{C}_l^t  & \hspace{30pt} & ^2\widetilde{D}_{l+1}, l\geq 2\\
  \widetilde{F}_4^t  & \hspace{30pt} & ^2\widetilde{E}_6\\
  \widetilde{G}_2^t  & \hspace{30pt} & ^3\widetilde{D}_4\\ 
 \end{array}
\end{displaymath}
\end{enumerate}

\noindent As in the finite dimensional case to every affine Cartan matrices $A$ one can associate a realizations $\mathfrak{g}(A)$. Those correspond exactly to the affine Kac-Moody algebras. Fortunately besides this abstract approach using generators and relations there is a very concrete second description for affine Kac-Moody algebras, namely the loop algebra approach. To describe the loop algebra approach to Kac-Moody algebras we follow the terminology of the article~\cite{Heintze09}.

 Let $\mathfrak{g}$ be a finite dimensional reductive Lie algebra over the field $\mathbb{F}=\mathbb{R}$ or $\mathbb{C}$. Hence $\mathfrak{g}$ is a direct product of a semisimple Lie algebra $\mathfrak{g}_s$ with an Abelian Lie algebra $\mathfrak{g}_a$. Let furthermore $\sigma \in \textrm{Aut}(\mathfrak{g}_s)$ denote an automorphism of finite order of $\mathfrak{g}_s$ such that  $\sigma|_{\mathfrak{g}_a}=Id$. If $\mathfrak{g}_s$ is a Lie algebra over $\mathbb{R}$ we suppose it to be of compact type.
$$L(\mathfrak g, \sigma):=\{f:\mathbb{R}\longrightarrow \mathfrak{g}\hspace{3pt}|f(t+2\pi)=\sigma f(t), f \textrm{ satisfies some regularity condition}\}\label{abstractkacmoodyalgebra}\,.$$
We use the notation $L(\mathfrak g, \sigma)$ to describe in a unified way algebraic constructions that apply to various explicit realizations of loop algebras satisfying sundry regularity conditions --- i.e.\ smooth, real analytic, (after complexification) holomorphic or algebraic loops. If we discuss loop algebras of a fixed regularity we use other precise notations: $M\mathfrak{g}$ for holomorphic loops on $\mathbb{C}^*$, $L_{alg}\mathfrak{g}$ for algebraic, $A_n\mathfrak{g}$ for holomorphic loops on the annulus $A_n=\{z\in \mathbb{C}|e^{-n}\leq |z|\leq e^n\}$.

\begin{definition}[Geometric affine Kac-Moody algebra]
\label{geometricaffinekacmoodyalgebra}
The geometric affine Kac-Moody algebra associated to a pair $(\mathfrak{g}, \sigma)$ is the algebra:
$$\widehat{L}(\mathfrak g, \sigma):=L(\mathfrak g, \sigma) \oplus \mathbb{F}c \oplus \mathbb{F}d\,,$$
equipped with the lie bracket defined by:
\begin{alignat*}{1}
  [d,f]&:=f';[c,c]=[c,d]=[c,f]=[d,d]=0\,;\\
  [f,g]&:=[f,g]_{0} + \omega(f,g)c\,.
\end{alignat*}

\noindent Here $f\in L(\mathfrak g, \sigma)$ and $\omega$ is a certain antisymmetric $2$\ndash form on $M\mathfrak g$, satisfying the cocycle condition.
\end{definition}

Let us remark that in contrast to the usual Kac-Moody theory, $\mathfrak{g}$ has not to be simple, but may be reductive, i.e.\ a product of semisimple Lie algebra with an Abelian one. The algebra $\widetilde{L}(G, \sigma):=L(\mathfrak g, \sigma) \oplus \mathbb{F}c$ is called the derived algebra.

We give some further definitions:

\begin{definition}
A real form of a complex geometric affine Kac-Moody algebra $\widehat{L}(\mathfrak{g}_{\mathbb{C}}, \sigma)$ is the fixed point set of a conjugate linear involution. 
\end{definition}

Involutions of a geometric affine Kac-Moody algebra restrict to involutions of irreducible factors of the loop algebra. Hence the invariant subalgebras are direct products of invariant subalgebras in those factors together with the appropriate torus extension.

\begin{definition}[compact real affine Kac-Moody algebra]
A compact real form of a complex affine Kac-Moody algebra $\widehat{L}(\mathfrak{g}_{\mathbb{C}}, \sigma)$ is  a real form which is isomorphic to $\widehat{L}(\mathfrak{g}_{\mathbb {R}}, \sigma)$
where $\mathfrak{g}_{\mathbb R}$ is a compact real form of
$\mathfrak{g}_{\mathbb C}$. 
\end{definition}

\begin{remark}
A semisimple Lie algebra is called of ``compact type'' iff it integrates to a compact semisimple Lie group.
The infinite dimensional generalization of compact Lie groups are loop groups of compact Lie groups and their Kac-Moody groups, constructed as extensions of those loop groups (see section~\ref{section:Affine_Kac-Moody_groups}). Thus the denomination is justified by the fact that ``compact'' affine Kac-Moody algebras integrate to ``compact'' Kac-Moody groups.
\end{remark}

To define a loop group of the compact type we use an infinite dimensional version of the Cartan-Killing form:

\begin{definition}[Cartan-Killing form]
The Cartan-Killing form of a loop algebra $L(\mathfrak{g}_{\mathbb{C}},\sigma)$ is defined by
$$B_{(\mathfrak{g}_{\mathbb{C}},\sigma)}\left(f, g\right)=\int_{0}^{2\pi} B\left(f(t), g(t) \right)dt\,.$$ 
\end{definition}

\begin{definition}[compact loop algebra]
A loop algebra of compact type is a subalgebra of $L(\mathfrak{g}_{\mathbb{C}},\sigma)$ such that its Cartan-Killing form is negative definite. 
\end{definition}

\begin{lemma}
Let $\mathfrak{g}_{\mathbb{R}}$ be a compact semisimple Lie algebra.
Then the loop algebra $L(\mathfrak{g}_{\mathbb{R}}, \sigma)$ is of compact type.
\end{lemma}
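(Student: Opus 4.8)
The plan is to compute the Cartan-Killing form of $L(\mathfrak{g}_{\mathbb{R}},\sigma)$ directly from its definition and show it is negative definite, which by the definition of ``compact loop algebra'' is exactly what we need. Since $\mathfrak{g}_{\mathbb{R}}$ is a compact semisimple Lie algebra, its (finite-dimensional) Cartan-Killing form $B$ is negative definite on $\mathfrak{g}_{\mathbb{R}}$; this is the standard characterization of compact type, which the excerpt has been using throughout (e.g. in the Remark preceding the lemma). So fix a nonzero $f \in L(\mathfrak{g}_{\mathbb{R}},\sigma)$ and look at
$$B_{(\mathfrak{g}_{\mathbb{R}},\sigma)}(f,f)=\int_0^{2\pi} B(f(t),f(t))\,dt.$$

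First I would observe that for every $t$ the integrand $B(f(t),f(t))$ is $\leq 0$, because $B$ is negative semidefinite (indeed negative definite) on $\mathfrak{g}_{\mathbb{R}}$; hence the integral is $\leq 0$. Next I would argue strict negativity: since $f$ is a nonzero loop and satisfies a regularity condition (in particular it is continuous), there is some $t_0$ with $f(t_0)\neq 0$, so $B(f(t_0),f(t_0))<0$, and by continuity $B(f(t),f(t))$ stays bounded away from $0$ on a neighborhood of $t_0$. Therefore the integral is strictly negative. Thus $B_{(\mathfrak{g}_{\mathbb{R}},\sigma)}$ is negative definite on $L(\mathfrak{g}_{\mathbb{R}},\sigma)$, so by the definition of ``loop algebra of compact type'' the algebra $L(\mathfrak{g}_{\mathbb{R}},\sigma)$ is of compact type, as claimed.

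One small point worth spelling out is that $L(\mathfrak{g}_{\mathbb{R}},\sigma)$ is a subalgebra of $L(\mathfrak{g}_{\mathbb{C}},\sigma)$ in a way compatible with the two Cartan-Killing forms: the form $B$ on $\mathfrak{g}_{\mathbb{R}}$ is the restriction of (the real part of) the complex Cartan-Killing form on $\mathfrak{g}_{\mathbb{C}}$, so the integral formula defining $B_{(\mathfrak{g}_{\mathbb{C}},\sigma)}$ restricts to the integral formula for $B_{(\mathfrak{g}_{\mathbb{R}},\sigma)}$, and the negative-definiteness computed above is precisely the condition in the definition of ``loop algebra of compact type''. There is really no serious obstacle here; the only thing to be careful about is invoking the regularity/continuity of loops to upgrade ``negative semidefinite'' to ``negative definite'', and making sure the identification of the finite-dimensional form $B$ as negative definite on a compact semisimple $\mathfrak{g}_{\mathbb{R}}$ is cited as the classical fact it is.
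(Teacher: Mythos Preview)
Your proposal is correct and follows essentially the same approach as the paper: the paper's proof simply notes that the Cartan-Killing form on $\mathfrak{g}_{\mathbb{R}}$ is negative definite and concludes that the integrated form is negative definite. You have spelled out the continuity argument needed to pass from a nonpositive integrand to a strictly negative integral, which the paper leaves implicit; this is a welcome bit of care but not a different route.
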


\begin{proof}
The Cartan-Killing form on $\mathfrak{g}_{\mathbb{R}}$ is negative definite. Hence $B_{(\mathfrak{g}_{\mathbb{C}},\sigma)}\left(f, g\right)$ is negative definite.
\end{proof}

To find noncompact real forms we need the following result of Ernst Heintze and Christian Groß (Corollary 7.7.\ of~\cite{Heintze09}):

\begin{theorem}
Let $\mathcal{G}$ be an irreducible complex geometric affine Kac-Moody algebra i.e.\ $\mathcal{G}= \widehat{L}(\mathfrak{g}, \sigma)$ with $\mathfrak{g}$ simple, $\mathcal{U}$ a real form of compact type. The conjugacy classes of real forms of noncompact type of $\mathcal{G}$ are in bijection with the conjugacy classes of involutions on $\mathcal{U}$. The correspondence is given by $\mathcal{U}=\mathcal{K}\oplus \mathcal{P}\mapsto \mathcal{K}\oplus i\mathcal{P}$ where $\mathcal{K}$ and $\mathcal{P}$ are the $\pm 1$\ndash eigenspaces of the involution.  
\end{theorem}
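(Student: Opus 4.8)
The plan is to prove the theorem by reducing it, via the loop algebra description, to the corresponding finite-dimensional classification of real forms of a complex simple Lie algebra, which is Cartan's theorem that conjugacy classes of noncompact real forms correspond bijectively to conjugacy classes of involutions of the compact real form. The key point is that the extra two dimensions $\mathbb{F}c\oplus\mathbb{F}d$ and the loop parameter do not introduce new phenomena: every relevant involution is, up to conjugation, ``pointwise'' on the loop algebra and acts in the expected way on $c$ and $d$.

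First I would set up the correspondence explicitly. Given a real form $\mathcal{U}=\widehat{L}(\mathfrak{g}_{\mathbb{R}},\sigma)$ of compact type (which exists and is unique up to conjugation, and is of compact type by the Lemma proved above), write $\mathcal{U}=\mathcal{K}\oplus\mathcal{P}$ for the $\pm1$-eigenspace decomposition of a given involution $\theta$ of $\mathcal{U}$. One checks that $\mathcal{K}\oplus i\mathcal{P}\subset\widehat{L}(\mathfrak{g}_{\mathbb{C}},\sigma)$ is a real subalgebra (this is the usual bracket computation: $[\mathcal{K},\mathcal{K}]\subset\mathcal{K}$, $[\mathcal{K},\mathcal{P}]\subset\mathcal{P}$, $[\mathcal{P},\mathcal{P}]\subset\mathcal{K}$ gives closedness under the bracket for $\mathcal{K}\oplus i\mathcal{P}$), that it is a real form (its complexification is all of $\widehat{L}(\mathfrak{g}_{\mathbb{C}},\sigma)$ since $\mathcal{P}+i\mathcal{P}=\mathcal{P}_{\mathbb{C}}$), and that it is not of compact type: one exhibits an element of $i\mathcal{P}$ on which the Cartan-Killing form $B_{(\mathfrak{g}_{\mathbb{C}},\sigma)}$ is positive, using that $B$ was negative definite on $\mathcal{U}$. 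Conversely, given a real form $\mathcal{G}_0$ of noncompact type with conjugation $\rho$, one produces the compact real form's conjugation $\tau$ and an involution $\theta=\rho\tau=\tau\rho$ after conjugating so that $\rho$ and $\tau$ commute; then $\theta|_{\mathcal{U}}$ is the desired involution, and $\mathcal{G}_0=\mathcal{K}\oplus i\mathcal{P}$. The two constructions are manifestly inverse to each other, and conjugation-equivariant, so they descend to a bijection on conjugacy classes.

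The main obstacle — and the step that really uses irreducibility and $\mathfrak{g}$ simple — is showing that any two commuting conjugations (equivalently, the involution $\theta$) can be brought to standard form, and in particular that involutions of $\mathcal{U}$ are classified the way Heintze--Groß assert. Here I would invoke the structure theory cited just before the statement: involutions of a geometric affine Kac-Moody algebra restrict to involutions of the loop algebra factor and act on the abelian part $\mathbb{F}c\oplus\mathbb{F}d$; since $\mathfrak{g}$ is simple the loop algebra $L(\mathfrak{g},\sigma)$ is irreducible, and an involution of it is, up to conjugation by the Kac-Moody group and by an automorphism coming from a reparametrization of the loop parameter, of the form $f(t)\mapsto\varrho(f(\pm t+t_0))$ for a finite-order automorphism $\varrho$ of $\mathfrak{g}$; the sign and shift are absorbed, reducing to a finite-dimensional involution. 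The genuinely delicate points are the conjugacy of compact real forms (so that $\tau$ is well-defined up to conjugation), the averaging/fixed-point argument that replaces an arbitrary $\rho$ by one commuting with $\tau$ (which in infinite dimensions requires the maximal compact subgroup of the Kac-Moody group to have the relevant fixed-point property), and checking that the ``$d$-direction'' contributes nothing new to the classification. For all of these I would cite Corollary 7.7 of \cite{Heintze09} directly, as the statement already does, and present the above as the conceptual skeleton of that result rather than reproving it.

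\begin{proof}
See Corollary 7.7 of~\cite{Heintze09}; the argument follows the finite-dimensional pattern sketched above, reducing via the loop algebra description to Cartan's classification of real forms of complex simple Lie algebras.
\end{proof}
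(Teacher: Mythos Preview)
Your proposal is correct and matches the paper's treatment: the paper does not prove this theorem at all but simply attributes it to Heintze--Gro\ss\ (Corollary~7.7 of~\cite{Heintze09}), exactly as your final proof environment does. Your preceding sketch of the Cartan-duality mechanism and the reduction to the finite-dimensional classification is additional exposition not present in the paper, but it is accurate and compatible with how Heintze--Gro\ss\ proceed.
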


Thus to find noncompact real forms we have to study automorphism of order $2$ of a geometric affine Kac-Moody algebra of the compact type. From now on we restrict to involutions $\widehat{\varphi}$ of type $2$, that is $\widehat{\varphi}(c)=-c$.

A careful examination of the construction of a geometric affine Kac-Moody algebra of a non simple Lie algebra allows, to extend this result to the broader class of geometric affine Kac-Moody algebras~\cite{Freyn09}:

\begin{theorem}
Let $\mathcal{G}$ be a complex geometric affine Kac-Moody algebra, $\mathcal{U}$ a real form of compact type. The conjugacy classes of real forms of noncompact type of $\mathcal{G}$ are in bijection with the conjugacy classes of involutions on $\mathcal{U}$. The correspondence is given by $\mathcal{U}=\mathcal{K}\oplus \mathcal{P}\mapsto \mathcal{K}\oplus i\mathcal{P}$ where $\mathcal{K}$ and $\mathcal{P}$ are the $\pm 1$\ndash eigenspaces of the involution. Furthermore every real form is either of compact type or of noncompact type. A mixed type is not possible.
\end{theorem}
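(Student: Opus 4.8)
The plan is to reduce the statement about general geometric affine Kac-Moody algebras $\mathcal{G} = \widehat{L}(\mathfrak{g}, \sigma)$ to the already-established irreducible case (the theorem of Heintze--Groß quoted above, Corollary 7.7 of \cite{Heintze09}) by decomposing $\mathfrak{g}$ into its simple and abelian factors and tracking how involutions respect this decomposition. First I would recall that $\mathfrak{g}$ is reductive, say $\mathfrak{g} = \mathfrak{g}_a \times \prod_{j=1}^r \mathfrak{g}_j$ with $\mathfrak{g}_j$ simple, and that $\sigma$ preserves each factor (permuting isomorphic simple factors and acting trivially on $\mathfrak{g}_a$). By the remark preceding the statement (``Involutions of a geometric affine Kac-Moody algebra restrict to involutions of irreducible factors of the loop algebra''), any conjugate-linear involution $\rho$ of the complex algebra $\mathcal{G}$, after conjugation, preserves each irreducible summand $\widehat{L}(\mathfrak{g}_j^{\oplus k_j}, \sigma_j)$ of the loop-algebra part and acts on the central/derivation part $\mathbb{F}c \oplus \mathbb{F}d$ compatibly (with $\rho(c) = \pm c$ fixed by our normalization to type $2$, i.e. $\rho(c) = -c$ on the relevant factor). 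Hence the invariant subalgebra is the direct product of invariant subalgebras in the factors together with the appropriate torus extension — exactly the structural fact already asserted in the excerpt.

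The key steps, in order, are: (1) fix a compact real form $\mathcal{U} = \widehat{L}(\mathfrak{g}_{\mathbb{R}}, \sigma)$ and observe — using the lemma above, that $L(\mathfrak{g}_{\mathbb{R}}, \sigma)$ is of compact type because the Cartan--Killing form on a compact semisimple $\mathfrak{g}_{\mathbb{R}}$ is negative definite, extended additively over the abelian factor where one chooses the metric negative definite — that $\mathcal{U}$ decomposes as a product of compact real forms of the irreducible pieces; (2) given an involution $\theta$ on $\mathcal{U}$, decompose it factorwise $\theta = \prod_j \theta_j$ (on abelian factors $\theta$ is essentially trivial and contributes nothing new), apply Heintze--Groß to each irreducible factor to obtain the noncompact real form $\mathcal{K}_j \oplus i\mathcal{P}_j$, and take the product; conversely, given a noncompact real form $\mathcal{N}$ of $\mathcal{G}$, use the decomposition of $\mathcal{N}$ into irreducible factors (and the corresponding decomposition of $\mathcal{G}$) to reconstruct the defining involution on $\mathcal{U}$ factorwise; (3) check the bijection descends to conjugacy classes, i.e. two involutions conjugate in $\mathrm{Aut}(\mathcal{U})$ give conjugate real forms and vice versa — here one must be careful that the automorphism group of $\mathcal{U}$ may permute isomorphic factors, but this is absorbed by working with conjugacy classes on both sides. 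For the final sentence — that every real form is purely of compact or noncompact type, never mixed — I would argue that the signature of the (possibly indefinite) Cartan--Killing form restricted to a real form is an invariant: on a factor it is either negative definite (compact) or indefinite of the specific signature forced by the $\mathcal{K} \oplus i\mathcal{P}$ splitting (noncompact); a ``mixed'' form would have to be compact on some factors and noncompact on others, but then one shows that such a combination does not arise from a single conjugate-linear involution because the involution's action on the common central element $c$ must be globally $\rho(c) = -c$ (our type-$2$ normalization) and the resulting eigenspace decomposition forces the same type on every irreducible summand — alternatively, irreducibility of $\mathcal{G}$ as a whole is not assumed, so ``mixed'' is genuinely possible only if $\mathfrak{g}$ is decomposable, and the claim is precisely that even then compactness is inherited uniformly once we fix that the reductive algebra is built from a single compact form.

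The main obstacle I expect is step (3) together with the last assertion: keeping the bookkeeping of conjugacy classes honest when $\mathfrak{g}$ has several isomorphic simple factors that $\sigma$ or an outer automorphism may interchange, and pinning down exactly why no mixed-type real form exists. The cleanest route for the no-mixed-type claim is to use the Cartan--Killing form $B_{(\mathfrak{g}_{\mathbb{C}},\sigma)}$ of Definition (Cartan-Killing form) as a scalar invariant: a real form of compact type is characterized by negative-definiteness of this form (Definition, compact loop algebra), a real form of noncompact type by a definite signature obstruction on the $i\mathcal{P}$ part; since the form is a sum over factors, mixedness would be detectable factorwise, and the point is that a conjugate-linear involution of the \emph{whole} $\mathcal{U}$ with $\rho(c)=-c$ cannot restrict to the identity (compact-type, which corresponds to the trivial involution) on some factors and to a nontrivial involution on others while remaining a single well-defined involution of $\mathcal{U}$ — in fact it can, but then the resulting fixed-point algebra is still ``of noncompact type'' in the sense of the definition because at least the central directions and the nontrivial factors carry the indefinite signature, so one must argue the definition of ``type'' is applied to the real form as a whole and show the dichotomy is exhaustive. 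I would therefore phrase the final sentence's proof as: the form $B$ restricted to any real form is either negative definite or indefinite, these being the only two possibilities, and ``compact type'' / ``noncompact type'' are defined to be exactly these two cases, so exhaustiveness is immediate once one rules out that the fixed point set of a type-$2$ involution could have positive-definite or null directions, which follows from the explicit $\mathcal{K} \oplus i\mathcal{P}$ description combined with negative-definiteness on $\mathcal{U}$.
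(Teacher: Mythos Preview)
Your overall strategy matches the paper's: the paper does not give a proof here at all, but simply states that the result follows by ``a careful examination of the construction of a geometric affine Kac-Moody algebra of a non simple Lie algebra'' and cites \cite{Freyn09}, which is precisely your reduction to the irreducible Heintze--Gro{\ss} theorem via a factorwise decomposition of the underlying reductive $\mathfrak{g}$. So on the level of approach there is nothing to compare --- you have reconstructed the intended argument.

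That said, two points in your write-up need tightening before it would count as a proof. First, the decomposition is not literally a product of irreducible Kac-Moody algebras: there is a \emph{single} pair $\mathbb{F}c \oplus \mathbb{F}d$ shared by all loop factors, so $\widehat{L}(\mathfrak{g}_1 \times \mathfrak{g}_2, \sigma) \neq \widehat{L}(\mathfrak{g}_1, \sigma_1) \times \widehat{L}(\mathfrak{g}_2, \sigma_2)$. You acknowledge this in passing, but the bookkeeping of how an involution acts on the common $c$ and $d$ (and why a type-$2$ involution on the whole forces a uniform type-$2$ condition on each irreducible ``piece'') needs to be made explicit; this is exactly the ``careful examination of the construction'' the paper alludes to. Second, your discussion of the no-mixed-type clause contradicts itself: you first argue that a single involution cannot be trivial on some factors and nontrivial on others, then concede ``in fact it can'', then try to rescue the claim by redefining the dichotomy. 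The clean argument is the one you gesture at last: the signature of the restricted Cartan--Killing form on the loop part is an invariant of the real form, and by the lemma immediately following the theorem in the paper (negative definite on $\mathcal{K}$, positive definite on $\mathcal{P}$) the form is either negative definite (hence compact type by definition) or genuinely indefinite (hence noncompact type). There is no third signature possibility on the semisimple part, and the abelian part contributes nothing by the subsequent lemma. Commit to that line and drop the contradictory paragraph.
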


\begin{lemma}
Let $\mathfrak{g}$ be semisimple and $\widehat{L}(\mathfrak{g}, \sigma)_D$ be a real form of the noncompact type. Let $\widehat{L}(\mathfrak{g}, \sigma)_D=\mathcal{K}\oplus \mathcal{P}$ be a Cartan decomposition. The Cartan Killing form is negative definite on $\mathcal{K}$ and positive definite on $\mathcal{P}$
\end{lemma}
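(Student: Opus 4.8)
The plan is to reduce everything to the behaviour of the Cartan--Killing form $B_{(\mathfrak{g}_{\mathbb{C}},\sigma)}$ on the compact real form $\mathcal{U}=\widehat{L}(\mathfrak{g}_{\mathbb{R}},\sigma)$ and then transport it across the passage $\mathcal{U}=\mathcal{K}\oplus\mathcal{P}\mapsto\mathcal{K}\oplus i\mathcal{P}$ which produces the noncompact real form, exactly as in the finite-dimensional blueprint. First I would recall the theorem quoted just above: every noncompact real form $\widehat{L}(\mathfrak{g},\sigma)_D$ arises as $\mathcal{K}\oplus i\mathcal{P}$, where $\mathcal{U}=\mathcal{K}\oplus\mathcal{P}$ is the decomposition into the $+1$ and $-1$ eigenspaces of an involution $\widehat{\varphi}$ of $\mathcal{U}$, and by our standing assumption $\widehat{\varphi}$ is of type $2$. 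The Cartan decomposition of $\widehat{L}(\mathfrak{g},\sigma)_D$ is then precisely this $\mathcal{K}\oplus i\mathcal{P}$.

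Next I would invoke the preceding lemma (and its proof): since $\mathfrak{g}_{\mathbb{R}}$ is compact semisimple, the loop algebra $L(\mathfrak{g}_{\mathbb{R}},\sigma)$ is of compact type, i.e. $B_{(\mathfrak{g}_{\mathbb{C}},\sigma)}$ is negative definite on it, and the same holds on the relevant torus-extension (the factors $\mathbb{F}c\oplus\mathbb{F}d$ contribute a fixed nondegenerate pairing which, with the sign conventions used for the compact form, makes the full form on $\mathcal{U}$ negative definite). Hence $B|_{\mathcal{K}}$ and $B|_{\mathcal{P}}$ are each negative definite as restrictions of a negative definite form to subspaces. The key algebraic observation is that $\mathcal{K}$ and $\mathcal{P}$ are $B$-orthogonal: this follows because $\widehat{\varphi}$ is an automorphism of the bracket, $B$ is invariant under $\widehat{\varphi}$, and for $k\in\mathcal{K}$, $p\in\mathcal{P}$ one has $B(k,p)=B(\widehat{\varphi}k,\widehat{\varphi}p)=B(k,-p)=-B(k,p)$, so $B(k,p)=0$. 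Then on $\widehat{L}(\mathfrak{g},\sigma)_D=\mathcal{K}\oplus i\mathcal{P}$ the form splits as a direct sum, it is negative definite on $\mathcal{K}$ as before, and on $i\mathcal{P}$ it satisfies $B(ip,ip)=i^2 B(p,p)=-B(p,p)$, which is positive because $B|_{\mathcal{P}}$ is negative definite. That gives the two claimed signs.

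The one point requiring real care — and the step I expect to be the main obstacle — is the bilinearity/conjugate-linearity bookkeeping: $\widehat{L}(\mathfrak{g},\sigma)_D$ is a \emph{real} subspace of the complex algebra $\widehat{L}(\mathfrak{g}_{\mathbb{C}},\sigma)$, and the relevant nondegenerate pairing on the noncompact form is the restriction of the $\mathbb{C}$-bilinear extension of $B_{(\mathfrak{g}_{\mathbb{C}},\sigma)}$, not a Hermitian form; one must check that this restriction is real-valued on $\widehat{L}(\mathfrak{g},\sigma)_D$ (which it is, since $\mathcal{K}$ and $i\mathcal{P}$ are spanned by elements on which $B$ is real and they are mutually orthogonal) so that the identity $B(ip,ip)=-B(p,p)$ is legitimate and the notions ``negative/positive definite'' make sense. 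Once that is in place, everything else is the orthogonal-splitting argument above, carried out factorwise over the irreducible components of the loop algebra together with the center-and-derivation part, invoking the Heintze--Gro\ss\ correspondence and the compactness lemma already at our disposal. I would also note that nondegeneracy of $B$ on the derived algebra together with the type-$2$ condition $\widehat{\varphi}(c)=-c$ is what guarantees that the $d$-direction pairs correctly and nothing degenerates, so the definiteness is genuine rather than merely semidefinite.
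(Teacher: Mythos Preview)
Your argument on the loop-algebra level is correct and is a genuinely different, more conceptual route than the paper's. The paper does not use the $\mathcal{K}\oplus i\mathcal{P}$ trick at all: instead it invokes the normal form $\varphi(f)(t)=\varphi_0(f(-t))$ for involutions of the second kind (from \cite{Heintze09}), writes an element of $\mathrm{Fix}(\varphi)$ explicitly as $f(t)=\sum_n k_n\cos(nt)+\sum_n p_n\sin(nt)$ with $k_n\in\mathfrak{k}$, $p_n\in\mathfrak{p}$ the $\pm1$-eigenspaces of $\varphi_0$ on the finite-dimensional $\mathfrak{g}$, and then integrates $B(f(t),f(t))$ term by term using the $L^2$-orthogonality of $\{\cos(nt),\sin(nt)\}$. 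The twisted case $\sigma\neq\mathrm{id}$ is handled by embedding $L(\mathfrak{g},\sigma)$ into an untwisted $L(\mathfrak{h},\mathrm{id})$. Your approach bypasses this Fourier computation entirely; the $\widehat{\varphi}$-invariance argument for $B(\mathcal{K},\mathcal{P})=0$ and the one-line sign flip $B(ip,ip)=-B(p,p)$ are cleaner and make the analogy with the finite-dimensional Cartan decomposition transparent.

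There is, however, one genuine error in your write-up. The claim that the $\mathbb{F}c\oplus\mathbb{F}d$ factors ``make the full form on $\mathcal{U}$ negative definite'' is false: the pairing between $c$ and $d$ is hyperbolic (this is exactly why the paper's Kac-Moody symmetric spaces are Lorentzian, not Riemannian). This does not hurt your conclusion for $\mathcal{K}$, since for a type-$2$ involution $c$ and $d$ both lie in the $(-1)$-eigenspace and hence $\mathcal{K}$ sits entirely inside the loop algebra, where negative definiteness does hold by the compactness lemma. But it does mean your sign-flip argument cannot yield positive definiteness on the $\mathrm{span}\{c,d\}$-part of $\mathcal{P}$. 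The paper's proof in fact computes only on the loop algebra as well (the integral form $B_{(\mathfrak{g}_{\mathbb{C}},\sigma)}$ is by definition only on $L(\mathfrak{g},\sigma)$), so you should simply drop the incorrect parenthetical about the torus extension and state the definiteness on the loop component, which is what both your argument and the paper's actually establish.
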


\begin{proof}
Suppose first $\sigma$ is the identity. Let $\varphi$ be an automorphism. Then without loss of generality $\varphi(f)=\varphi_0(f(-t))$~\cite{Heintze09}. Let $\mathfrak{g}=\mathfrak{k}\oplus \mathfrak{p}$ be the decomposition of $\mathfrak{g}$ into the $\pm 1$-eigenspaces of $\varphi_0$. Then
$f\in \textrm{Fix}(\varphi)$ iff its Taylor expansion satisfies
$$\sum_n a_n e^{int}= \sum \varphi_0(a_{-n})e^{int}\,.$$
Let $a_n=k_n\oplus p_n$ be the decomposition of $a_n$ into the $\pm 1$ eigenspaces with respect to $\varphi_0$. 
Hence $$f(t)=\sum_n k_n \cos(nt)+\sum_n p_n \sin(nt)\,.$$ 
Then using bilinearity and the fact that $\{\cos(nt), \sin(nt)\}$ are orthonormal we can calculate $B_{\mathfrak{g}}$:
$$B_\mathfrak{g}=\int_0^{2\pi}\sum_n \cos^{2}(nt)B(k_n, k_n)-\int_0^{2\pi}\sum_n \sin^{2}(nt)B(p_n, p_n)\,.$$
Hence $B_{\mathfrak{g}}$ is negative definite on $\textrm{Fix}(\varphi)$. Analogously one calculates that it is positive definite on the $-1$\ndash eigenspace of $\varphi$. 
If $\sigma\not=Id$ then one gets the same result by embedding $L(\mathfrak{g}, \sigma)$ into an algebra $L(\mathfrak{h}, \textrm{id})$ which is always possible~\cite{Kac90}. 

\end{proof}

\begin{lemma}
Let $\mathfrak{g}$ be Abelian. The Cartan-Killing form of $L(\mathfrak{g})$ is trivial.
\end{lemma}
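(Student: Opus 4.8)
The plan is to reduce this to the fact that an Abelian Lie algebra carries the zero Cartan--Killing form and then to invoke the definition of the loop-algebra Cartan--Killing form as a pointwise integral. First I would recall that, by the definition just given, the Cartan--Killing form of $L(\mathfrak{g}, \sigma)$ is $B_{(\mathfrak{g},\sigma)}(f,g)=\int_0^{2\pi} B(f(t),g(t))\,dt$, where $B$ denotes the Cartan--Killing form of the finite-dimensional Lie algebra $\mathfrak{g}$; note that for $\mathfrak{g}$ Abelian the twisting automorphism is necessarily trivial ($\sigma|_{\mathfrak{g}_a}=\mathrm{id}$), so $L(\mathfrak{g})=L(\mathfrak{g},\mathrm{id})$ is the honest loop algebra.

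The key step is the observation that if $\mathfrak{g}$ is Abelian then $[x,y]=0$ for all $x,y\in\mathfrak{g}$, hence $\ad(x)=0\in\mathrm{End}(\mathfrak{g})$ for every $x\in\mathfrak{g}$, and therefore $B(x,y)=\mathrm{tr}\bigl(\ad(x)\ad(y)\bigr)=\mathrm{tr}(0)=0$ identically on $\mathfrak{g}$. Substituting this into the integral formula, the integrand $B(f(t),g(t))$ vanishes for every $t$ and every pair of loops $f,g\in L(\mathfrak{g})$, so $B_{(\mathfrak{g},\mathrm{id})}(f,g)=\int_0^{2\pi}0\,dt=0$. Thus the Cartan--Killing form of $L(\mathfrak{g})$ is identically zero.

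There is essentially no obstacle here: the statement is immediate from unwinding the two definitions, and the only point worth a word of care is the harmless one flagged above, namely that the Abelian hypothesis forces the loop construction to be the untwisted one, so that the integral formula for the Cartan--Killing form applies verbatim. (This lemma is exactly what makes the Abelian factor $\mathfrak{g}_a$ of a reductive $\mathfrak{g}$ contribute a degenerate direction, to be paired off by the extra $\mathbb{F}c$, $\mathbb{F}d$ summands, just as the central/derivation directions are in the simple case.)
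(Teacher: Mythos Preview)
Your proposal is correct and is precisely the ``direct calculation'' the paper indicates as its proof: the Cartan--Killing form of an Abelian $\mathfrak{g}$ vanishes identically, so the integrand in the definition of $B_{(\mathfrak{g},\sigma)}$ is zero. There is nothing more to it.
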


\begin{proof}
Direct calculation.
\end{proof}

\noindent Now we can define OSAKAs, the Kac-Moody analogue of orthogonal symmetric Lie algebras \cite{Freyn09}:

\begin{definition}[Orthogonal symmetric Kac-Moody algebra]

An orthogonal symmetric affine Kac-Moody algebra (OSAKA) is a pair $\left(\widehat{L}(\mathfrak{g}, \sigma), \widehat{\rho}\right)$ such that
\begin{enumerate}
	 \item $\widehat{L}(\mathfrak{g}, \sigma)$ is a real form of an affine geometric Kac-Moody algebra,
	 \item $\widehat{\rho}$ is an involutive automorphism of $\widehat{L}(\mathfrak{g}, \sigma)$ of the second kind,
	 \item $\textrm{Fix}(\widehat{\rho})$ is a compact real form.
\end{enumerate}
\end{definition}

\noindent Following Helgason, we define $3$ types of OSAKAs:

\begin{definition}[Types of OSAKAs]

\label{types of OSAKAs}
Let $\left(\widehat{L}(\mathfrak{g}, \sigma),\widehat{\rho}\right)$ be an OSAKA. Let $\widehat{L}(\mathfrak{g}, \sigma)=\mathcal{K}\oplus \mathcal{P}$ be the decomposition of $\widehat{L}(\mathfrak{g}, \sigma)$ into the eigenspaces of $\widehat{L}(\mathfrak{g}, \sigma)$ of eigenvalue $+1$ resp.\ $-1$.
\begin{enumerate}
	\item If $\widehat{L}(\mathfrak{g}, \sigma)$ is a compact real affine Kac-Moody algebra, it is said to be of the compact type.
	\item If $\widehat{L}(\mathfrak{g}, \sigma)$ is a noncompact real affine Kac-Moody algebra, $\widehat{L}(\mathfrak{g}, \sigma)=\mathcal{U}\oplus \mathcal{P}$ is a Cartan decomposition of $\widehat{L}(\mathfrak{g}, \sigma)$.
	\item If $L(\mathfrak{g},\sigma)$ is Abelian it is said to be of Euclidean type.
\end{enumerate}
\end{definition}

\begin{definition}[irreducible OSAKA]
An OSAKA $\left(\widehat{L}(\mathfrak{g}, \sigma), \widehat{\rho}\right)$ is called irreducible iff its derived algebra has no non-trivial derived Kac-Moody subalgebra invariant under $\widehat{\rho}$.
\end{definition}

\noindent Thus we can describe the different classes of irreducible OSAKAs of compact type.

\begin{enumerate}
\item The first class consists of pairs consisting of compact real forms $\widehat{M\mathfrak{g}}$, where $\mathfrak{g}$ is a simple Lie algebra together with an involution of the second kind. Complete classifications are available~\cite{Heintze08}. This are irreducible factors of type $I$. They correspond to Kac-Moody symmetric spaces of type $I$.
\item Let $\mathfrak{g}_{\mathbb{R}}$ be a simple real Lie algebra of the compact type. The second class consists of  pairs of an affine Kac-Moody algebra $\widehat{M(\mathfrak{g}_{\mathbb{R}}\times \mathfrak{g}_{\mathbb{R}})}$ together with an involution of the second kind, interchanging the factors. Those algebras correspond to Kac-Moody symmetric spaces that are compact Kac-Moody groups equipped with their $Ad$\ndash invariant metrics (type $II$). 
\end{enumerate}

\noindent Dualizing OSAKAs of the compact type, we get the OSAKAs of the noncompact type.

\begin{enumerate}
\item Let $\mathfrak{g}_{\mathbb{C}}$ be a complex semisimple Lie algebra, and $\widehat{L}(\mathfrak{g}_{\mathbb{C}},\sigma)$ the associated affine Kac-Moody algebra.
This class consists of real forms of the noncompact type that are described as fixed point sets of involutions of type 2 together with a special involution, called Cartan involution. This is the unique involution on $\mathcal{G}$, such that the decomposition into its $\pm 1$ eigenspaces $\mathcal{K}$ and $\mathcal{P}$ yields: $\mathcal{K}\oplus i \mathcal{P}$ is a real form of compact type of $\widehat{L}(\mathfrak{g}_{\mathbb{C}},\sigma)$. Those orthogonal symmetric Lie algebras correspond to Kac-Moody symmetric spaces of type $III$.

\item Let $\mathfrak{g}_{\mathbb{C}}$ be a complex semisimple Lie algebra. The fourth class consists of $\widehat{L}(\mathfrak{g}_{\mathbb{C}},  \sigma)$ with the involution given by the complex conjugation $\widehat{\rho_0}$ with respect to a compact real form, i.e.\ $\widehat{L}(\mathfrak{g}_{\mathbb{R}}, \sigma)$. 
Those algebras correspond to Kac-Moody symmetric spaces of type $IV$.
\end{enumerate}

The derived algebras of the last class of OSAKAs --- the ones of Euclidean type  --- are Heisenberg algebras~\cite{PressleySegal86}. The maximal subgroup of compact type is trivial.

\subsection{Affine Kac-Moody groups}
\label{section:Affine_Kac-Moody_groups}

There are several different approaches to affine Kac-Moody groups. The usual algebraic approach follows the definition of algebraic groups via a functor. Tits defines a group functor from the category of rings into the category of groups, whose evaluation on the category of fields yields Kac-Moody groups. Various completions of the groups defined this way are possible. Nevertheless, restricting to affine Kac-Moody groups, there is a second much more down to earth approach which consists in the definition of Kac-Moody groups as special extensions of loop groups - for those two and other approaches compare the article \cite{Tits84}. This second approach for affine Kac-Moody groups relies on a curious identification:
Let $G$ denote an affine algebraic group scheme and study the group $G(k[t,t^{-1}])$, where $k$ is a field. Either, defining a torus extension of $G(k[t,t^{-1}])$ we get a Kac-Moody group over the field $k$ or tensoring with the quotient field $k(t)$ of $k[t,t^{-1}]$ we get an algebraic group over $k(t)$. This hints to a close connection between algebraic groups and affine algebraic Kac-Moody groups, with a loop group as the intermediate object. Now ``analytic'' completions can be defined just by completing the ring $k[t,t^{-1}]$ with respect to some norm.

In this section we focus on the loop group approach to Kac-Moody groups.  Our presentation follows the book~\cite{PressleySegal86}.

We use again the regularity-independent notation $L(G_{\mathbb{C}}, \sigma)$ for the complex loop group and $L(G, \sigma)$ for its real form of compact type. To define groups of polynomial maps, we use the fact, that every compact Lie group is isomorphic to a subgroup of some unitary group. Hence we can identify it with a matrix group. Similarly, the complexification can be identified with a subgroup of some general linear group~\cite{PressleySegal86}.

\noindent Kac-Moody groups are constructed in two steps.
\begin{enumerate}
\item The first step consists in the construction of an $S^1$\ndash bundle in the real case (resp.\ a $\mathbb{C}^*$\ndash bundle in the complex case) over $L(G, \sigma)$ that corresponds via the exponential map to the derived algebra. The fiber corresponds to the central term $\mathbb{R}c$ (resp.\ $\mathbb{C}c$) of the Kac-Moody algebra.
\item In the second step we construct a semidirect product with $S^1$ (resp.\ $\mathbb{C}^*$). This corresponds via the exponential map to the $\mathbb{R}d$\ndash\/ (resp.\ $\mathbb{C}d$\ndash) term
\end{enumerate}

\noindent Study first the extension of $L(G, \sigma)$ with the short exact sequence:
$$1 \longrightarrow S^1\longrightarrow X \longrightarrow L(G, \sigma) \longrightarrow 1\,.$$

\noindent There are various groups $X$ that fit into this sequence. We need to find a group $\widetilde{L}(G, \sigma)$ such that its tangential Lie algebra at $e\in \widetilde{L}(G, \sigma)$  is isomorphic to $\widetilde{L}(\mathfrak{g}, \sigma)$.

As described in~\cite{PressleySegal86} this $S^1$\ndash bundle is best represented by triples $(g,p, z)$ where $g$ is an element in the loop group, $p$ a path connecting the identity to $g$ and $z\in  S^1 (\textrm{respective } \mathbb C^*)$ subject to the relation of equivalence: $(g_1, p_1, z_1) \sim (g_2, p_2, z_2)$ iff $g_1= g_2$ and $z_1= C_{\omega}(p_2*p_1^{-1})z_2$. Here $C_{\omega}(p_2*p_1^{-1})=e^{\int_{S(p_2*p_1^{-1})}\omega}$ where $S(p_2*p_1^{-1})$ is a surface bounded by the closed curve $p_2*p_1^{-1}$ and $\omega$ denotes the $2$\ndash form used to define the central extension of $L(\mathfrak{g}, \sigma)$.  The term $z_1= C_{\omega}(p_2*p_1^{-1})z_2$ defines a twist of the bundle. The law of composition is defined by $$(g_1, p_1, z_1)\cdot (g_2, p_2, z_2)=(g_1g_2, p_1*g_1(p_2), z_1 z_2)\,.$$

If $G$ is simply connected and $\omega$ integral (which is the case in our situation), it can be shown that this object is a well defined group~\cite{PressleySegal86}, theorem 4.4.1.
If $G$ is not simply connected, the situation is a little more complicated: Let $G=H/Z$ where $H$ is a simply connected Lie group and $Z=\pi_1(G)$. Let $(LG)_0$ denote the identity component of $LG$. We can describe the  extension using the short exact sequence~\cite{PressleySegal86}, section 4.6. :
$$1\longrightarrow S^1\longrightarrow \widetilde{LH}/Z\longrightarrow (LG)_0\longrightarrow 1$$

\noindent In case of complex loop groups, the $S^1$\ndash bundle is complexified to a $\mathbb{C}^*$\ndash bundle.

The second much easier extension yields now Kac-Moody groups:

\begin{definitions}[Kac-Moody group]~
\begin{enumerate}
\item Let $G$ be a compact real Lie group. The compact real Kac-Moody group $\widehat{L}(G, \sigma)$ is the semidirect product of
$S^1$ with the $S^1$\ndash bundle $\widetilde{L}(G, \sigma)$.

\item Let $G_{\mathbb{C}}$ be a complex simple Lie group. The complex Kac-Moody group $\widehat{L}(G_{\mathbb{C}}, \sigma)$ is the semidirect product
of $\mathbb C^*$ with the $\mathbb{C}^*$\ndash bundle $\widetilde{L}(G_{\mathbb{C}}, \sigma)$. It is the complexification of $\widehat{L}(G, \sigma)$.
\end{enumerate}
\end{definitions}

\noindent The action of the semidirect $S^1$\ndash (resp.\ $\mathbb{C}^*$\ndash) factor is in both cases given by a shift of
the argument: $\mathbb C^* \ni w: MG \rightarrow MG: f(z)\mapsto
f(z \cdot w)$.

\section{Kac-Moody symmetric spaces}
\label{section:Kac-Moody_symmetric_spaces}

The existence of Kac-Moody symmetric spaces was conjectured by Chuu-Lian Terng 1995 in her article~\cite{terng95}, nevertheless she explains that a rigorous definition faces serious difficulties. In his thesis Bogdan Popescu investigates possible constructions and is able to define Kac-Moody symmetric spaces of the compact type; nevertheless his approach fails for spaces of the noncompact type~\cite{Popescu05}. 

While we have freedom in choosing which regularity to use for the construction of many objects in Kac-Moody geometry, the regularity of Kac-Moody symmetric spaces is completely fixed by the algebraic operations required: we described that the semidirect factor acts on the loop group by a shift of the argument. As in the complex case this factor is isomorphic to $\mathbb{C}^*$, we need holomorphic loops on $\mathbb{C}^*$ for the complexification.

The realizations, we use for Kac-Moody symmetric spaces are the non-twisted groups $MG:=\{f:\mathbb{C}^*\longrightarrow G_{\mathbb{C}}|f \textrm{ is holomorphic}\}$ and the twisted groups $MG^{\sigma}:=\{f\in MG |\ \sigma \circ f(z)=f(\omega z)\}$. The real form of compact type is defined by $MG_{\mathbb R}:=\{f\in MG_{\mathbb C}| f(S^1) \subset G_{\mathbb R}\}\,.$

\begin{theorem}
\label{mgcistame}
$MG_{\mathbb C}$ and $MG_{\mathbb{C}}^{\sigma}$ are tame Fréchet manifolds. The same is true for compact real forms and all quotients that appear in the definition of Kac-Moody symmetric spaces.
\end{theorem}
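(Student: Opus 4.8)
The plan is to reduce the statement to a single concrete model — holomorphic maps $\mathbb{C}^* \to G_{\mathbb{C}}$ with $G_{\mathbb{C}}$ realized as a matrix group — and then verify the defining axioms of a tame Fréchet manifold (in the sense of Hamilton) by Fourier/Laurent expansion. First I would recall that by the discussion in Section~\ref{section:Affine_Kac-Moody_groups}, $G_{\mathbb{C}}$ embeds as a closed complex subgroup of some $GL(N,\mathbb{C})$, so $MG_{\mathbb{C}}$ sits inside $M\,\mathfrak{gl}(N,\mathbb{C})$-valued holomorphic maps; the ambient space of all holomorphic maps $\mathbb{C}^* \to \mathrm{Mat}^{N\times N}(\mathbb{C})$ is a Fréchet space under the seminorms given by sup norms on the annuli $A_n = \{e^{-n}\le |z| \le e^n\}$. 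The key point is that this Fréchet space is \emph{tame}: using Laurent coefficients $f = \sum_k a_k z^k$, the grading by $|k|$ together with the exponential weights $e^{n|k|}$ coming from the annulus norms gives the standard tame structure on a space of rapidly decreasing sequences (this is exactly the situation treated in Hamilton's survey and used throughout loop group theory). So the ambient space is a tame Fréchet space.

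Next I would upgrade this to the manifold statement. The group $MG_{\mathbb{C}}$ is the preimage of $\{e\}$ under the (pointwise-evaluated) defining polynomial equations of $G_{\mathbb{C}} \subset GL(N,\mathbb{C})$; since those equations are polynomial, the induced map on loop spaces is a tame map between tame Fréchet spaces, and one checks it is a tame submersion onto its image near the identity, so that $MG_{\mathbb{C}}$ is a tame Fréchet submanifold with tangent space $M\mathfrak{g}_{\mathbb{C}}$ at $e$ and charts obtained by left translation composed with a tame exponential chart. Concretely, the exponential map $M\mathfrak{g}_{\mathbb{C}} \to MG_{\mathbb{C}}$, $\xi \mapsto \exp\circ\,\xi$, is tame smooth with tame smooth local inverse (the logarithm series converges and is tame on a neighbourhood of $0$ because composition and multiplication of matrix-valued holomorphic loops are tame bilinear operations on each annulus). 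This gives the atlas of tame charts, establishing that $MG_{\mathbb{C}}$ is a tame Fréchet manifold.

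For the twisted version $MG_{\mathbb{C}}^\sigma$ one observes it is the fixed-point set of the tame smooth involution $f(z) \mapsto \sigma^{-1} f(\omega z)$ of $MG_{\mathbb{C}}$; a fixed-point set of a tame smooth involution is a tame Fréchet submanifold, with tangent space the $+1$-eigenspace $M\mathfrak{g}_{\mathbb{C}}^\sigma$, and the restricted exponential charts still work. The compact real form $MG_{\mathbb{R}} = \{f \in MG_{\mathbb{C}} \mid f(S^1)\subset G_{\mathbb{R}}\}$ is again the fixed-point set of a tame smooth antiholomorphic involution (conjugate-linear, built from the real structure on $G_{\mathbb{C}}$ together with $z \mapsto 1/\bar z$), hence tame. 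Finally, all the quotients entering the definition of Kac-Moody symmetric spaces are quotients by closed subgroups of the above form acting by left translation (or by the $S^1$/$\mathbb{C}^*$ central and semidirect factors of Section~\ref{section:Affine_Kac-Moody_groups}), and a local-section argument — a tame slice through the identity coset — shows these quotients inherit the tame Fréchet manifold structure.

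The main obstacle, and the step I would spend the most care on, is the \emph{tameness} of the nonlinear operations: one must check that matrix multiplication of holomorphic loops, composition $f \mapsto f \circ (z \mapsto \omega z)$, inversion in $MG_{\mathbb{C}}$, and the exponential/logarithm series all satisfy tame estimates with respect to the annulus seminorms, i.e.\ that the loss of derivatives (here, loss in the annulus parameter $n$) is uniformly bounded. The delicate case is inversion/logarithm, where one needs a Neumann-series estimate on each $A_n$ with constants controlled linearly in $n$; this is where the argument genuinely uses holomorphy on all of $\mathbb{C}^*$ (rather than on a fixed annulus) and the interpolation structure of the family $\{A_n\}$. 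Everything else — the submanifold and fixed-point-set arguments, and the quotient construction — is then formal once the tame category machinery of Hamilton is in place and these estimates are recorded.
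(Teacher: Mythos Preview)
Your approach is plausible but takes a genuinely different route from the paper. You work through an ambient matrix embedding $G_{\mathbb C}\subset GL(N,\mathbb C)$ and build charts from the pointwise exponential/logarithm, then pass to fixed-point sets and quotients. The paper instead linearizes via the \emph{logarithmic derivative}: it embeds
\[
\varphi: MG_{\mathbb C}\hookrightarrow \Omega^1(\mathbb C^*,\mathfrak g_{\mathbb C})\times G_{\mathbb C},\qquad f\mapsto (f^{-1}df,\ f(1)),
\]
and then identifies the image as those $1$-forms with trivial monodromy (the $(-1)$-Laurent coefficient lying in $\tfrac{1}{2\pi i}\exp^{-1}(e)$), showing this is a tame Fr\'echet submanifold of a linear tame space; the base result $\mathrm{Hol}(\mathbb C^*,\mathbb C^n)$ is tame is proved separately. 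For the extensions and quotients the paper does not redo a slice argument but invokes Popescu's theorem that Banach fibre bundles over tame Fr\'echet manifolds are tame.

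What each buys: the Maurer--Cartan embedding gives an essentially \emph{global} chart (up to the finite-dimensional base point $f(1)$), so tameness reduces to a single submanifold condition---regularity of the monodromy map---rather than to a battery of nonlinear estimates. Your route is more elementary in spirit but forces you to prove tame estimates for inversion and for the logarithm series, which you rightly flag as the crux; it also leaves the quotient step only sketched, whereas the paper offloads that to Popescu. One caution on your write-up: the opening move (``preimage of $\{e\}$ under the defining polynomials, hence tame submersion'') is not really what you use---you immediately switch to exponential charts---so you should drop or tighten that sentence. If you keep the exponential-chart argument, make explicit that the local inverse (matrix logarithm applied pointwise) is tame because the Cauchy estimates on $A_n$ control the series with loss independent of $n$.
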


The idea of the proof is to use logarithmic derivatives. The concept of logarithmic derivatives for regular Lie groups is developed in the book~\cite{Kriegl97}, chapters 38 and 40. 
Furthermore it is used by Karl-Hermann Neeb to prove regularity results for locally convex Lie groups~\cite{Neeb06}.
Recall the definitions of a tame Fréchet space~\cite{Hamilton82}:

\begin{definition}[Fréchet space]
A Fréchet vector space is a locally convex topological vector space which is complete, Hausdorff and metrizable.
\end{definition}

\begin{definition}[Grading]
Let $F$ be a Fréchet space. A grading on $F$ is a collection of seminorms $\{\|\hspace{3pt}\|_{n}, n\in \mathbb N_0\}$ that define the topology and satisfy
$$\|f \|_0\leq \|f\|_1 \leq \|f\|_2 \leq\| f \|_3 \leq \dots \,.$$
\end{definition}

\begin{definition}[Tame equivalence of gradings]
Let $F$ be a graded Fréchet space, $r,b \in \mathbb{N}$ and $C(n), n\in \mathbb{N}$ a sequence with values in $\mathbb{R}^+$. The two gradings  $\{\|\hspace{3pt}\|_n\}$ and $\{\widetilde{\|\hspace{3pt}\|}_n\}$ are called $\left(r,b,C(n)\right)$-equivalent iff 
\begin{equation*}
 \|f\|_n \leq C(n) \widetilde{\|f\|}_{n+r} \text{ and }  \widetilde{\|f\|}_n \leq C(n)\|f\|_{n+r} \text{ for all } n\geq b
\,.
\end{equation*}
They are called tame equivalent iff they are $(r,b,C(n))$\ndash equivalent for some $(r,b,C(n))$.
\end{definition}

\begin{example}
Let $B$ be a Banach space with norm $\| \hspace{3pt} \|_B$. Denote by $\Sigma(B)$ the space of all exponentially decreasing sequences $\{f_k\}$, ${k\in \mathbb N_0}$ of elements of $B$.
On this space, we can define various different gradings; among them are the following:
\begin{equation*}
\begin{aligned}
\|f\|_{l_1^n} &:= \sum_{k=0}^{\infty}e^{nk} \|f_k\|_B\\
\|f\|_{l_{\infty}^n}&:= \sup_{k\in \mathbb N_0} e^{nk}\|f_k\|_B
\end{aligned}
\end{equation*}
\end{example}

\begin{definition}[Tame map]
A linear map $\varphi: F\longrightarrow G$ is called $(r,b,C(n))$\ndash tame if it satisfies the inequality
$$\|\varphi(f)\|_n \leq C(n)\|f\|_{n+r}\,.$$
$\varphi$ is called tame iff it is $(r,b,C(n))$\ndash tame for some $(r,b, C(n))$.
\end{definition}

\begin{definition}[Tame isomorphism]
A map $\varphi:F\longrightarrow G$ is called a tame isomorphism iff it is a linear isomorphism and $\varphi$ and $\varphi^{-1}$ are tame maps.
\end{definition}

\begin{definition}[Tame direct summand]
$F$ is a tame direct summand of $G$ iff there exist tame linear maps $\varphi: F\longrightarrow G$ and $\psi: G \longrightarrow F$ such that $\psi \circ \varphi: F \longrightarrow F$ is the identity.
\end{definition}

\begin{definition}[Tame  space]
$F$ is tame iff there is a Banach space $B$ such that $F$ is a tame direct summand of $\Sigma(B)$.
\end{definition}

\begin{theorem}
The space $F: =\textrm{Hol}(\mathbb{C}^*,\mathbb C^n)$ is a tame Fréchet space.
\end{theorem}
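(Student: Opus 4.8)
The plan is to exhibit $F=\mathrm{Hol}(\mathbb{C}^*,\mathbb{C}^n)$ explicitly as a tame direct summand of a sequence space $\Sigma(B)$ for a suitable Banach space $B$, using the Laurent expansion. Every $f\in F$ has a unique Laurent series $f(z)=\sum_{k\in\mathbb{Z}}a_k z^k$ with $a_k\in\mathbb{C}^n$, converging on all of $\mathbb{C}^*$; the convergence on annuli $\{e^{-m}\le|z|\le e^m\}$ for all $m$ forces the two-sided sequence $(a_k)$ to decay faster than $e^{-m|k|}$ for every $m$, i.e.\ the coefficients are ``exponentially decreasing'' in the sense of the definition of $\Sigma(B)$. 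First I would fix the natural grading on $F$ by the sup-norms on annuli, $\|f\|_m:=\sup_{e^{-m}\le|z|\le e^m}\|f(z)\|$, check that these are seminorms (in fact norms), that they are increasing in $m$, and that they define the Fréchet topology of uniform convergence on compact subsets of $\mathbb{C}^*$ (compact sets are exhausted by these annuli, and $F$ is complete and metrizable for this topology, hence Fréchet by the earlier definition).

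Next I would fold the two-sided index $k\in\mathbb{Z}$ into a one-sided index via a bijection $\mathbb{Z}\to\mathbb{N}_0$ that is comparable to $|k|$ (for instance $k\mapsto 2k$ for $k\ge 0$ and $k\mapsto -2k-1$ for $k<0$), set $B=\mathbb{C}^n$, and define $\varphi:F\to\Sigma(B)$ by sending $f$ to its reindexed Laurent coefficients, and $\psi:\Sigma(B)\to F$ by summing the corresponding Laurent series. The heart of the argument is the pair of quantitative estimates needed for tameness: on one side, Cauchy's integral formula on the circle $|z|=e^{\pm m}$ gives $\|a_k\|\le e^{-m|k|}\|f\|_m$, which after reindexing yields a tame bound $\|\varphi(f)\|_{l_\infty^{m'}}\le C\|f\|_{m'+r}$ for appropriate shift $r$ absorbing the factor $2$ from the reindexing; on the other side, the triangle inequality applied termwise to the Laurent series gives $\|\psi(a)\|_m=\sup_{|z|=e^{\pm m}}\|\sum a_k z^k\|\le\sum_k e^{m|k|}\|a_k\|$, which is dominated by a shifted $l_\infty$-norm of the sequence because the extra decay $e^{-|k|}$ (from passing to grading level $m+1$) makes $\sum_k e^{m|k|}e^{-(m+1)|k|}=\sum_k e^{-|k|}$ a convergent geometric-type series; the $e^{nk}$ versus $e^{-|k|}$ bookkeeping together with the factor-$2$ reindexing is exactly an $(r,b,C(n))$-tame estimate. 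Finally $\psi\circ\varphi=\mathrm{id}_F$ by uniqueness of Laurent expansions, so $F$ is a tame direct summand of $\Sigma(\mathbb{C}^n)$, hence tame.

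The main obstacle I expect is purely bookkeeping rather than conceptual: one must be careful that the grading $\{\|\cdot\|_m\}$ on $F$ is tamely equivalent to whatever grading one uses when comparing with $\Sigma(B)$, and that the reindexing bijection $\mathbb{Z}\to\mathbb{N}_0$ and the choice between the $l_1$- and $l_\infty$-gradings on $\Sigma(B)$ are reconciled by absorbing bounded linear shifts into the parameter $r$ and the constants $C(n)$ — the example in the excerpt comparing $\|\cdot\|_{l_1^n}$ and $\|\cdot\|_{l_\infty^n}$ is precisely the tool that lets one pass freely between these. A secondary point worth a sentence is that $\Sigma(B)$ as defined consists of sequences indexed by $\mathbb{N}_0$, so the two-sidedness of Laurent series genuinely needs the folding step; once that is in place the estimates above close the argument with no further subtlety.
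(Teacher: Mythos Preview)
The paper does not actually prove this theorem; it states only that ``the quite technical proof can be found in~\cite{Freyn09}.'' So there is no in-paper argument to compare against. Your approach via Laurent expansion, exhibiting $F$ as a tame direct summand of a sequence space $\Sigma(B)$, is the standard route in Hamilton's framework and is almost certainly what the cited thesis does.

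One genuine bookkeeping slip to fix: the factor of $2$ introduced by your folding $\mathbb{Z}\to\mathbb{N}_0$ is \emph{multiplicative} in the grading index, not additive, so it cannot be absorbed into a fixed shift $r$ as you claim. With your bijection $j\approx 2|k|$, the Cauchy estimate yields $\|\varphi(f)\|_{l_\infty^{m}}\le\|f\|_{2m}$ rather than $\|f\|_{m+r}$, and that is not an $(r,b,C(n))$-tame bound. The easy remedy is to avoid the folding altogether: take $B=\mathbb{C}^{2n}$ and set $b_j=(a_j,a_{-j})$ for $j\ge 1$ and $b_0=(a_0,0)$, so that the weight $e^{mj}$ matches $e^{m|k|}$ exactly; both of your estimates then go through with shift $r=1$. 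Alternatively, you may simply declare the grading on $F$ to be $\|f\|_m:=\sup_{A_{2m}}|f|$ from the outset, since tameness only requires the existence of \emph{some} grading making the embedding tame. With either adjustment your argument is complete.
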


\noindent The quite technical proof can be found in \cite{Freyn09}.  As a consequence $M\mathfrak{g}$ is a tame Lie algebra (For a Lie algebra to be tame, we require additionally, that the adjoint action of each element is tame).

\begin{proof} Proof of theorem \ref{mgcistame}~\cite{Freyn09}: We concentrate on the special case $MG$.

\noindent Start with an embedding
\[
\begin{array}{ccr}
\varphi: MG_{\mathbb{C}}&\hookrightarrow& \Omega^{1}(\mathbb{C}^*, \mathfrak{g}_{\mathbb{C}}) \times G_{\mathbb{C}}\\
f &\mapsto& (\delta(f)=f^{-1}df, f(1))
\end{array}
\]

Let $\pi_1$ and $\pi_2$ denote the projections:
\[
\begin{array}{ccc}
\pi_1: \Omega^{1}(\mathbb{C}^*, \mathfrak{g}_{\mathbb{C}}) \times G_{\mathbb{C}} &\mapsto& \Omega^{1}(\mathbb{C}^*, \mathfrak{g}_{\mathbb{C}})\\
\pi_2: \Omega^{1}(\mathbb{C}^*, \mathfrak{g}_{\mathbb{C}}) \times G_{\mathbb{C}} &\mapsto& G_{\mathbb{C}}
\end{array}
\]

Charts for $MG_{\mathbb{C}}$ will be products of charts for $\pi_1 \circ \varphi(MG_{\mathbb{C}})$ and $\pi_2 \circ \varphi(MG_{\mathbb{C}})$.
\begin{itemize}
\item[-] $\pi_2 \circ \varphi$ is surjective; hence to describe the second factor, we can choose charts for $G$. Via the exponential mapping, we use charts in $\mathfrak{g}_{\mathbb{C}}$. To describe the norms, we use for $\|\hspace{3pt}\|_n$ on this factor the Euclidean metric.  

\item[-] The first factor is more difficult to deal with as $\pi_1 \circ \varphi$ is not surjective. While every form $\alpha \in \Omega(\mathbb{C}^*, \mathfrak{g}_{\mathbb{C}})$ is locally integrable, the monodromy may prevent global integrability. 
A form $\alpha \in \Omega^1(\mathbb{C}^*, \mathfrak{g}_{\mathbb{C}})$ is in the image of $\pi_1 \circ \varphi$ iff its monodromy vanishes, that is iff
$$e^{\int_{\mathbb{S}^1}\alpha}=e\in G_{\mathbb{C}}\,.$$

This is equivalent to the condition $\int_{\mathbb{S}^1}\alpha = a_{-1}(\alpha) \subset \frac{1}{2\pi i} \exp^{-1}(e)$ where $a_{-1}(\alpha) $ denotes the $(-1)$\ndash  coefficient of the Laurent development  of $\alpha= f(z)dz$. So we can describe $\Im(\pi_1\circ \varphi)$ as the inverse image of the monodromy map of $e\in G_{\mathbb{C}}$.

Thus we have to show that this inverse image is a tame Fréchet manifold. To this end, we use composition with a chart $\psi: U\longrightarrow V$ for $e\in U\subset G$ with values in $G_{\mathbb{C}}$. This gives us a tame map $\Omega(\mathbb{C}^*, \mathfrak{g}_{\mathbb{C}})\longrightarrow \mathfrak{g}_{\mathbb{C}}$. The proof is completed by showing that this map is regular and the proof that inverse images of regular maps are tame Fréchet submanifolds~\cite{Freyn09}.  This proves that $\pi_1 \circ \varphi$ is a tame Fréchet submanifold.
\end{itemize}
\noindent Thus $MG_{\mathbb{C}}$ is a product of a tame Fréchet manifold with a Lie group and hence a tame Fréchet manifold.
This completes the proof of theorem~\ref{mgcistame}.
\end{proof}

A similar result for Kac-Moody groups, is a direct consequence of a result of Bogdan Popescu, stating that fiber bundles whose fiber is a Banach space over tame Fréchet manifolds are tame~\cite{Popescu05}.

Kac-Moody symmetric spaces are tame Fréchet manifolds. Those manifolds have tangential spaces, that can be equipped with weak metrics. We suppose furthermore the existence of a Levi-Civita connection. Then one can prove, that the usual algebraic identities  --- i.e.\ the Bianchi identities --- hold~\cite{Hamilton82}. 

Other concepts well known from the finite dimensional case can be generalized too: 

\noindent A nice example is Kulkarni's theorem about the sectional curvature on Lorentz manifolds:
Define as usual the sectional curvature by $K_f(g,h)=\frac{\langle R(f)\{g,h,g\},h \rangle}{|g\wedge h|^2}$ if  $|g\wedge h|^2 \neq 0 $. Then

\begin{theorem}[generalized Kulkarni-type]
\label{Kulkarni}
Let $M$ be a tame Fréchet Lorentz manifold with Levi-Civita connection. Then the
following conditions are equivalent:
\begin{itemize}
  \item[-] \hspace{3pt}  $K_f(g,h)$ is constant.
  \item[-] \hspace{3pt} $a\leq K_f(g,h)$ or $K_f(g,h) \leq b$ for some $a, b\in \mathbb R$.
  \item[-] \hspace{3pt} $a \leq K_f(g,h) \leq b$ on all definite planes for some
    $a\leq b \in \mathbb R$.
  \item[-] \hspace{3pt} $a \leq K_f(g,h) \leq b$ on all indefinite planes for some
    $a\leq b \in \mathbb R$.
\end{itemize}
\end{theorem}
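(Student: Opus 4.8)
The plan is to reduce the statement to the classical (finite\ndash dimensional) theorem of Kulkarni together with a Schur\ndash type argument, using that every instance of the sectional curvature involves only finitely many tangent vectors. The implication from ``$K_f(g,h)$ constant'' to each of the three boundedness conditions is trivial, so all the work is in the three converse implications; since they are proved in the same way I would treat them uniformly, the only difference being which family of $2$\ndash planes the hypothesis controls (and, in the ``$a\le K$ or $K\le b$'' case, noting that a one\ndash sided bound on all nondegenerate planes already suffices in Kulkarni's lemma).

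Fix $f\in M$. Since $M$ carries a Levi\ndash Civita connection the curvature operator $R(f)$ is defined on the weakly metrized tame Fréchet space $T_fM$, and by the Bianchi identities recalled above it is an \emph{algebraic curvature tensor}: it has the two skew\ndash symmetries, the pair symmetry and the first Bianchi identity. Given $g,h$ (or any finite list of tangent vectors appearing in a computation), the subspace they span is finite\ndash dimensional, and since $M$ is Lorentz every finite\ndash dimensional subspace of $T_fM$ has index $\le 1$; adding at most one spacelike and one timelike vector I obtain a \emph{Lorentzian} subspace $W\subset T_fM$ with $\dim W\ge 3$ containing the given vectors, on which $R(f)$ still restricts to an algebraic curvature tensor. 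The boundedness hypothesis on $M$ descends to $W$, so the classical Kulkarni theorem applies to $R(f)|_W$ and produces a constant $c(W)$ with $R(f)|_W=c(W)\,R_1|_W$, where $R_1(x,y)z=\langle y,z\rangle x-\langle x,z\rangle y$. Comparing two such subspaces inside their (again Lorentzian, $\ge 3$\ndash dimensional) sum shows that $c(W)$ does not depend on $W$; calling it $c(f)$ I conclude $R(f)=c(f)\,R_1$ on all of $T_fM$, hence $K_f(g,h)=c(f)$ on every nondegenerate plane. The only place the indefinite signature enters — and the point I would spell out — is the input to Kulkarni's algebraic lemma on $W$: a degenerate plane $\mathrm{span}(u,v)$ with $u$ null and $v\perp u$ spacelike is the limit of nondegenerate planes $\mathrm{span}(u+\varepsilon w,v)$ with $\langle w,v\rangle=0$, whose Gram determinant is $O(\varepsilon)$ and whose causal type is prescribed by the sign of $\langle u,w\rangle$; since $\langle R(u+\varepsilon w,v)v,u+\varepsilon w\rangle\to\langle R(u,v)v,u\rangle$, boundedness of $K_f$ on the relevant family forces $\langle R(u,v)v,u\rangle=0$ on all degenerate planes, which is exactly what Kulkarni needs.

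It remains to promote $c(f)$ to a genuine constant, which I would do by the usual Schur argument: differentiating $R=c\,R_1$ gives $(\nabla_X R)(Y,Z)=(Xc)\,R_1(Y,Z)$, and substituting into the second Bianchi identity — available in the tame Fréchet category by the results of Hamilton quoted above — the cyclic sum becomes $(Xc)R_1(Y,Z)+(Yc)R_1(Z,X)+(Zc)R_1(X,Y)=0$; evaluating with $X,Y,Z$ chosen inside a $3$\ndash dimensional nondegenerate subspace of $T_fM$ forces $dc=0$. Hence $c$ is locally constant, so constant on each connected component of $M$, and therefore $K_f(g,h)\equiv c$, which closes the chain of equivalences.

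The main obstacle is organizational rather than analytic. One must check that (i) $R(f)$ is genuinely a symmetric algebraic curvature tensor on the weakly metrized Fréchet space $T_fM$ — this is precisely where the assumed existence of the Levi\ndash Civita connection and of the Bianchi identities is used — and (ii) the reduction to finite\ndash dimensional Lorentzian subspaces, the independence of the pointwise constant $c(f)$ from the chosen subspace, and the availability of the \emph{second} Bianchi identity for the Schur step are carried out carefully. Once these are in place, each remaining manipulation is the classical one applied verbatim inside a finite\ndash dimensional subspace, so the weak metric and the infinite dimension never actually intervene.
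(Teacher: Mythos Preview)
The paper does not actually prove this theorem: it is stated and then followed only by the sentence ``For the finite dimensional proof of this theorem and more generally finite dimensional Lorentz geometry~\cite{Oneill83}.'' There is no argument in the paper to compare against; your proposal supplies precisely what the paper omits.

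Your strategy is the natural one and is essentially correct. The key observation --- that sectional curvature and the algebraic identities it satisfies only ever involve finitely many tangent vectors, so one may work inside a finite\ndash dimensional Lorentzian subspace $W\subset T_fM$ where O'Neill/Kulkarni applies verbatim --- is exactly the right reduction, and your handling of the degenerate\ndash plane limit is the standard way to feed the boundedness hypothesis into Kulkarni's algebraic lemma. Two small points worth tightening: first, the sum $W_1+W_2$ of two Lorentzian subspaces need not itself be nondegenerate, so when you compare constants you should explicitly enlarge by one further vector pairing nontrivially with any null direction to restore nondegeneracy before invoking the lemma again; second, for the Schur step you should note that $c$ is smooth (locally $c=\langle R(u,v)v,u\rangle/\langle R_1(u,v)v,u\rangle$ for a fixed nondegenerate pair $u,v$), and that ``$dc=0\Rightarrow c$ locally constant'' on a tame Fr\'echet manifold follows by integrating along smooth paths. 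With these cosmetic fixes the argument goes through, and it is in the spirit of what the paper's reference to \cite{Oneill83} implicitly suggests.
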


For the finite dimensional proof of this theorem and more generally finite dimensional Lorentz geometry~\cite{Oneill83}.

Let us now turn to Kac-Moody symmetric spaces themselves:

\begin{definition}[Kac-Moody symmetric space]
An (affine) Kac-Moody symmetric space $M$ is a tame Fréchet Lorentz symmetric space such that its isometry group $I(M)$ contains a transitive subgroup isomorphic to an affine geometric Kac-Moody group $H$. 
\end{definition}

We can distinguish Kac-Moody symmetric spaces of the Euclidean, the compact and the noncompact type, corresponding to the respective types of Riemannian symmetric spaces:

We have the following results:

\begin{theorem}[affine Kac-Moody symmetric spaces of the ``compact'' type]

Both the  Kac-Moody group $\widehat{MG}^{\sigma}_{\mathbb R}$
equipped with its $Ad$\ndash invariant metric, and the quotient space
$X= \widehat{MG}^{\sigma}_{\mathbb R}/\textrm{Fix}(\rho_*)$ equipped with their
$Ad$\ndash invariant metric are tame Fréchet
symmetric spaces of the ``compact'' type with respect to their natural $Ad$\ndash invariant
metric. Their curvatures satisfy 
\[
\langle R(X,Y)X, Y\rangle \geq 0\,.
\]
\end{theorem}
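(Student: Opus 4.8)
The plan is to realise both spaces as homogeneous spaces attached to an OSAKA of the compact type, to read off the manifold and symmetric-space structure from the material already assembled, and to reduce the curvature estimate, at the base point, to the fact that the Cartan--Killing form is negative semidefinite on the derived algebra $\widetilde L(\mathfrak g,\sigma)=L(\mathfrak g,\sigma)\oplus\mathbb{R}c$.

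First I would fix the two symmetric pairs. The group $\widehat{MG}^\sigma_{\mathbb R}$ with its $Ad$-invariant metric is the homogeneous space of $\big(\widehat{MG}^\sigma_{\mathbb R}\times\widehat{MG}^\sigma_{\mathbb R},\Delta\big)$ with $\Delta$ the diagonal and the flip as involution (the type $II$ case), its tangent space at $e$ being $\widehat L(\mathfrak g,\sigma)$ itself; the quotient $X=\widehat{MG}^\sigma_{\mathbb R}/\mathrm{Fix}(\rho_*)$ is the homogeneous space of $\big(\widehat{MG}^\sigma_{\mathbb R},\mathrm{Fix}(\rho_*)\big)$ (the type $I$ case), with isotropy subalgebra the $+1$-eigenspace $\mathcal K=\mathrm{Fix}(\widehat\rho)$ and tangent space at the base point the $-1$-eigenspace $\mathcal P$. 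In both cases the underlying algebra is a compact real affine Kac-Moody algebra, so the OSAKA is of the compact type and the integrated Cartan--Killing form $B_{(\mathfrak g_{\mathbb C},\sigma)}$ is negative definite on $L(\mathfrak g,\sigma)$. That every manifold in sight is a tame Fréchet manifold follows from Theorem~\ref{mgcistame} together with Popescu's theorem on Banach-fibre bundles over tame Fréchet manifolds~\cite{Popescu05}; the geodesic symmetry at the base point ($g\mapsto g^{-1}$ for the group, the map induced by $\widehat\rho$ for the quotient) is an isometry because the metric is $Ad$-invariant, and transitivity of the Kac-Moody action by isometries propagates symmetries to all points, so the symmetric-space axioms hold. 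The metric is Lorentzian: $-B_{(\mathfrak g_{\mathbb C},\sigma)}$ is positive definite on $L(\mathfrak g,\sigma)$, while on $\mathbb{R}c\oplus\mathbb{R}d$ the invariant form is the hyperbolic plane ($\langle c,c\rangle=\langle d,d\rangle=0$, $\langle c,d\rangle\neq 0$), adding exactly one negative direction. Finally, as in the discussion preceding Theorem~\ref{Kulkarni}, I assume the existence of a Levi-Civita connection; being invariant and torsion-free it coincides with the canonical connection, so at the base point the curvature is the usual symmetric-space (resp.\ bi-invariant-metric) expression in the Lie bracket on the tangent space.

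For the estimate, note first that for any $X,Y$ in the tangent space at the base point the bracket $[X,Y]$ --- computed in $\widehat L(\mathfrak g,\sigma)$ in both cases (directly for the quotient, and through the identification of the tangent space of the bi-invariant metric with $\widehat L(\mathfrak g,\sigma)$ for the group) --- lies in the derived algebra $\widetilde L(\mathfrak g,\sigma)$: indeed no bracket in $\widehat L(\mathfrak g,\sigma)$ has a $d$-component, since $[d,d]=0$, $[d,f]=f'\in L(\mathfrak g,\sigma)$, $[c,\cdot]=0$ and $[f,g]=[f,g]_0+\omega(f,g)c$. Applying the symmetric-space curvature formula together with the $\mathrm{ad}$-invariance of the Cartan--Killing form (the identity $\langle[A,B],C\rangle=\langle A,[B,C]\rangle$, valid because $B_{(\mathfrak g_{\mathbb C},\sigma)}$ is invariant under every automorphism) one obtains, for a positive constant $c_0$,
\[
\langle R(X,Y)X,Y\rangle \;=\; -\,c_0\,B_{(\mathfrak g_{\mathbb C},\sigma)}\big([X,Y],[X,Y]\big),
\]
where $B_{(\mathfrak g_{\mathbb C},\sigma)}$ now denotes the $\mathrm{ad}$-invariant extension of the Cartan--Killing form to $\widehat L(\mathfrak g,\sigma)$. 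On the derived algebra $\widetilde L(\mathfrak g,\sigma)$ this form is negative semidefinite --- it is negative definite on $L(\mathfrak g,\sigma)$, vanishes on $\mathbb{R}c$, and $L(\mathfrak g,\sigma)$ and $c$ are orthogonal --- so the right-hand side is $\geq 0$; as $[X,Y]\in\widetilde L(\mathfrak g,\sigma)$ this gives $\langle R(X,Y)X,Y\rangle\geq 0$ at the base point, hence everywhere by homogeneity.

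The main obstacle is not this computation but the two features that set the infinite-dimensional situation apart from the finite-dimensional blueprint. First, one must know a priori that a tame Fréchet Levi-Civita connection and a genuine curvature tensor obeying the classical algebraic identities exist at all --- this is precisely what the cited tame Fréchet machinery delivers, and it is what licenses the symmetric-space curvature formula. Second, and more essentially, the metric is indefinite, so nonnegativity of curvature is not a consequence of any positivity statement: the real content of the estimate is that the extra Lorentzian directions $c$ and $d$ never contribute negatively, which works out because $[X,Y]$ always lands in the derived algebra, $c$ is lightlike and $B$-orthogonal to the loop algebra, and $d$ does not occur in a bracket at all.
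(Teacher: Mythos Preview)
The paper does not prove this theorem in the text; it is stated as a result with the details deferred to~\cite{Freyn09}. Your argument is the natural one and aligns with the framework the paper has assembled: the tame Fr\'echet manifold structure via Theorem~\ref{mgcistame} and Popescu's bundle result, the OSAKA formalism for the two types, the assumed Levi-Civita connection together with the algebraic curvature identities, and the negative definiteness of the integrated Cartan--Killing form on $L(\mathfrak g_{\mathbb R},\sigma)$. The decisive observation --- that every bracket in $\widehat L(\mathfrak g,\sigma)$ lands in the derived algebra $\widetilde L(\mathfrak g,\sigma)=L(\mathfrak g,\sigma)\oplus\mathbb R c$, where the invariant form is negative semidefinite because $c$ is isotropic and orthogonal to the loop part --- is exactly what lets the inequality survive the Lorentzian signature, and you isolate it cleanly.

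One small sharpening worth recording: in the type~$I$ quotient the involution $\widehat\rho$ is of the second kind, so $c,d\in\mathcal P$; hence $\mathcal K\subset L(\mathfrak g,\sigma)$ and $[\mathcal P,\mathcal P]\subset\mathcal K$ already sits inside the loop part, where the form is negative \emph{definite}. Only in the group (type~$II$) case does one genuinely need the semidefinite statement on the full derived algebra.
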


\begin{theorem}[affine Kac-Moody symmetric spaces of the ``noncompact'' type]

Both quotient spaces $X=\widehat{MG}^{\sigma}_{\mathbb{C}}/\widehat{MG}^{\sigma}_{\mathbb R}$ and  
$X=H/\textrm{Fix}(\rho_*)$, where $H$ is a noncompact real form of $\widehat{MG}^{\sigma}_{\mathbb{C}}$ equipped with their $Ad$\ndash invariant metric,
are tame Fréchet symmetric spaces of the ``noncompact'' type. Their curvatures satisfy

\[\langle R(X,Y)X, Y\rangle \leq 0\,.
\]

\noindent Furthermore Kac-Moody symmetric spaces of the noncompact type are
diffeomorphic to a vector space.
\end{theorem}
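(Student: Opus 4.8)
The plan is to run, in the tame Fréchet category, the argument that produces a Riemannian symmetric space of the noncompact type from an orthogonal symmetric Lie algebra, now starting from an OSAKA of the noncompact type. Write $\widehat{L}(\mathfrak g,\sigma)_D = \mathcal K\oplus\mathcal P$ for the Cartan decomposition, so that $\mathcal K = \mathrm{Fix}(\widehat\rho)$ is a compact real form and, at the group level, $\mathrm{Fix}(\rho_*)$ integrates $\mathcal K$; the space $\widehat{MG}^\sigma_{\mathbb C}/\widehat{MG}^\sigma_{\mathbb R}$ is the instance where $H = \widehat{MG}^\sigma_{\mathbb C}$ (viewed as a real group) and $\rho_*$ is complex conjugation along the compact real form. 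First I would record the manifold structure: by Theorem~\ref{mgcistame} the groups $MG_{\mathbb C}$, $MG^\sigma_{\mathbb C}$ and their compact real forms are tame Fréchet manifolds, by Popescu's fibre-bundle result~\cite{Popescu05} the Kac--Moody extensions are again tame, and since $\mathrm{Fix}(\rho_*)$ is a closed subgroup the homogeneous space $X = H/\mathrm{Fix}(\rho_*)$ is a tame Fréchet manifold. Then I would put the $Ad$-invariant metric on $X$: it is induced by the Cartan--Killing form of the Kac--Moody algebra; the eigenspace decomposition of $\widehat\rho$ is orthogonal for this form, and by the signature Lemma the form is negative definite on $\mathcal K$ and positive definite on $\mathcal P\cap L$, while the pairing of the $c$- and $d$-directions (which lie in $\mathcal P$ because $\widehat\rho$ is of the second kind, $\widehat\rho(c)=-c$) contributes exactly one indefinite direction, so the induced metric on $T_oX\cong\mathcal P$ is Lorentzian and extends to an $H$-invariant Lorentz metric on $X$.

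Next, the symmetric-space and curvature part. The geodesic symmetry at the base point $o$ is $s_o(gK) = \Theta(g)K$, where $\Theta$ is the involutive automorphism of $H$ integrating $\rho_*$; it fixes $o$, has differential $-\mathrm{Id}$ on $T_oX\cong\mathcal P$, and preserves the $Ad$-invariant metric because $\Theta$ preserves the Cartan--Killing form, so it is an isometric geodesic symmetry; conjugating by the transvections $\exp(tX)$, $X\in\mathcal P$, gives symmetries at all points, whence $X$ is a tame Fréchet Lorentz symmetric space whose transvection group contains the Kac--Moody group $H$ acting transitively, i.e.\ a Kac--Moody symmetric space. For the curvature, using the assumed Levi-Civita connection and the formal identities valid in this setting (Section~\ref{section:Kac-Moody_symmetric_spaces}, including the Bianchi identities), the curvature tensor at $o$ is the algebraic one, $R(X,Y)Z = [[X,Y],Z]$ for $X,Y,Z\in\mathcal P$; invariance of the Cartan--Killing form then gives $\langle R(X,Y)X,Y\rangle = B([X,Y],[X,Y])$ with $[X,Y]\in[\mathcal P,\mathcal P]\subset\mathcal K$, and since $B$ is negative definite on $\mathcal K$ this is $\leq 0$ on every plane (with equality exactly when $[X,Y]=0$). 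This is the noncompact-type estimate, dual to the compact-type computation of the previous theorem.

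It remains to show $X$ is diffeomorphic to a vector space, and here the Riemannian Cartan--Hadamard theorem is unavailable because the metric is Lorentzian (and, even in the Riemannian range, infinite-dimensional Cartan--Hadamard needs completeness hypotheses that are awkward for weak metrics). Instead I would prove the global Cartan (polar) decomposition of the Kac--Moody group, $H = \mathrm{Fix}(\rho_*)\cdot\exp(\mathcal P)$, with the multiplication map $\mathrm{Fix}(\rho_*)\times\mathcal P\to H$ a diffeomorphism of tame Fréchet manifolds; composing with the projection $H\to X$ then exhibits $\exp\colon\mathcal P\to X$, $p\mapsto\exp(p)\,\mathrm{Fix}(\rho_*)$, as a diffeomorphism onto $X$, so $X\cong\mathcal P$, a Fréchet vector space. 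To get the decomposition, one transfers the finite-dimensional polar decomposition $G_{\mathbb C} = G_{\mathbb R}\exp(i\mathfrak g_{\mathbb R})$ (resp.\ the Cartan decomposition of the relevant noncompact real group) to loops and lifts it through the torus extension, showing that $\exp|_{\mathcal P}$ is an injective immersion with tame inverse onto its image and that the multiplication map is onto and open; the latter follows from a tame (Nash--Moser) inverse function theorem, whose hypothesis is met because the differential of $(k,p)\mapsto k\exp(p)$ at $(e,0)$ is the identity of $\mathcal K\oplus\mathcal P$, a tame isomorphism, and the absence of conjugate points along the rays $\exp(tp)$ is controlled by the curvature sign from the previous step on the definite directions~\cite{Freyn09}.

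\textbf{Main obstacle.} The hard part is precisely this last step: establishing the global Cartan decomposition and its tame-diffeomorphism property for the holomorphic loop realization. The difficulty is that the pointwise polar decomposition of a holomorphic loop $\gamma\colon\mathbb C^*\to G_{\mathbb C}$ along $S^1$ need not extend holomorphically to all of $\mathbb C^*$, so one cannot simply decompose loops coordinatewise; one must control analytic continuation and convergence of the exponential series on the various loop completions and then run a Nash--Moser iteration to upgrade the resulting formal or pointwise statements to smooth tame ones --- this regularity bookkeeping, rather than any conceptual novelty, is where the real work lies. Everything else is a transcription of the finite-dimensional symmetric-space argument, using Theorem~\ref{mgcistame}, the signature Lemma, and the curvature formalism of Section~\ref{section:Kac-Moody_symmetric_spaces}.
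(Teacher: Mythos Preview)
The paper is a survey and states this theorem without proof, deferring all details to the author's thesis~\cite{Freyn09}. So there is no in-text proof to compare against; one can only check your outline against the framework the paper sets up.

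Within that framework your plan is the expected one and is consistent with everything the paper provides: the tame Fréchet manifold structure is exactly what Theorem~\ref{mgcistame} and Popescu's fibre-bundle result supply; the Lorentz signature on $\mathcal P$ follows from the signature Lemma on the loop part together with the $c$--$d$ pairing (both lie in $\mathcal P$ because $\widehat\rho$ is of the second kind); the symmetric-space structure via the involution $\Theta$ is the standard construction; and the curvature computation $\langle R(X,Y)X,Y\rangle = B([X,Y],[X,Y])\le 0$ with $[X,Y]\in\mathcal K$ is the routine algebraic identity, dual to the compact-type case. One small caution: the central term $\omega(f,g)c$ in the Kac--Moody bracket a priori lands in $\mathbb Rc\subset\mathcal P$, not in $\mathcal K$, so to conclude $[\mathcal P,\mathcal P]\subset\mathcal K$ you should check that $\omega$ vanishes on $\mathcal P\times\mathcal P$ (this uses that $\widehat\rho$ preserves the cocycle and sends $c\mapsto -c$); otherwise your curvature identity acquires an extra term that happens to be null but should be accounted for.

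For the diffeomorphism to a vector space you correctly reject Cartan--Hadamard and go for a global Cartan decomposition $H=\mathrm{Fix}(\rho_*)\cdot\exp(\mathcal P)$; this is indeed the route taken in~\cite{Freyn09}, and your identification of the main obstacle --- that the pointwise polar decomposition of a holomorphic loop on $S^1$ need not extend holomorphically to $\mathbb C^*$, so one must argue via tame inverse-function methods rather than coordinatewise --- is exactly the point where the genuine work sits. Your sketch of how to overcome it (Nash--Moser plus control of the exponential on the holomorphic completion) is the right shape, though the actual execution is substantial and is what~\cite{Freyn09} is cited for.
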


\noindent Define the notion of duality as for finite dimensional Riemannian symmetric spaces.

\begin{theorem}[Duality]
Affine Kac-Moody symmetric spaces of the compact type are dual to Kac-Moody symmetric spaces of the noncompact type and vice versa.
\end{theorem}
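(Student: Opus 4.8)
The plan is to reduce the duality of Kac-Moody symmetric spaces to an algebraic duality of OSAKAs, exactly as Helgason does in the finite dimensional case, and then to transport it through the correspondence between (simply connected) Kac-Moody symmetric spaces and OSAKAs furnished by the two preceding theorems.

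First I would make the algebraic duality precise. Let $\bigl(\widehat{L}(\mathfrak{g},\sigma),\widehat{\rho}\bigr)$ be an OSAKA of compact type, so $\widehat{L}(\mathfrak{g},\sigma)=\mathcal{U}$ is a compact real form and $\widehat{\rho}$ is an involution of the second kind with $\mathcal{U}=\mathcal{K}\oplus\mathcal{P}$ the decomposition into $\pm 1$\ndash eigenspaces. Passing to the complexification $\widehat{L}(\mathfrak{g}_{\mathbb{C}},\sigma)$, form the dual real form $\mathcal{U}^{*}:=\mathcal{K}\oplus i\mathcal{P}$ together with the involution $\widehat{\rho}^{*}$ acting as $+1$ on $\mathcal{K}$ and $-1$ on $i\mathcal{P}$. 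Three things have to be checked: (i) by the theorem of Heintze and Groß quoted above, together with its extension to reductive $\mathfrak{g}$, $\mathcal{U}^{*}$ is a real form of the noncompact type and $\mathcal{U}\mapsto\mathcal{U}^{*}$ induces a bijection on conjugacy classes of involutions; (ii) since $\mathcal{K}\oplus i(i\mathcal{P})=\mathcal{K}\oplus\mathcal{P}=\mathcal{U}$ is compact, $\widehat{\rho}^{*}$ is exactly the Cartan involution of $\mathcal{U}^{*}$, hence $\textrm{Fix}(\widehat{\rho}^{*})=\mathcal{K}$ is a compact real form and $\bigl(\mathcal{U}^{*},\widehat{\rho}^{*}\bigr)$ is an OSAKA of the noncompact type; (iii) the assignment is involutive on isomorphism classes, $\mathcal{U}^{**}\cong\mathcal{U}$, which accounts for the ``and vice versa''. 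Under this correspondence type $I$ is exchanged with type $III$ and type $II$ with type $IV$, while OSAKAs of Euclidean type are declared self-dual, there being no bracket data on $\mathcal{P}$ to twist.

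Second I would transport this through integration. Each Kac-Moody symmetric space $M$ determines an OSAKA by taking the Kac-Moody algebra of its transvection group together with the differential of the point symmetry $\sigma_{m}$; conversely the two theorems on Kac-Moody symmetric spaces of compact and of noncompact type show that an OSAKA of the respective type integrates to a tame Fréchet Lorentz symmetric space of that type, namely $\widehat{MG}^{\sigma}_{\mathbb R}/\textrm{Fix}(\rho_{*})$ resp.\ $H/\textrm{Fix}(\rho_{*})$, uniquely so after passing to the simply connected cover. Declaring the dual of $M$ to be the space integrating $\bigl(\mathcal{U}^{*},\widehat{\rho}^{*}\bigr)$ then gives the asserted bijection between the compact and the noncompact families. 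Compatibility of curvature is automatic: replacing $\mathcal{P}$ by $i\mathcal{P}$ reverses the sign of the Cartan--Killing form on the $-1$\ndash eigenspace (by the lemma on the definiteness of $B$ on $\mathcal{K}$ and $\mathcal{P}$), and the curvature of these spaces is, up to an overall sign, $\ad^{2}$ restricted to that eigenspace, so $\langle R(X,Y)X,Y\rangle\geq 0$ on the compact side is interchanged with $\langle R(X,Y)X,Y\rangle\leq 0$ on the noncompact side, in agreement with the two preceding theorems.

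The main obstacle is not the algebra but the bookkeeping that makes the OSAKA-to-space passage a genuine bijection in the tame Fréchet category: one needs that the space attached to an OSAKA of noncompact type is pinned down by it (simply connectedness, geodesic completeness, transitivity of the Kac-Moody group), and that the involution $\widehat{\rho}^{*}$, with its normalization $\widehat{\rho}^{*}(c)=-c$ and the resulting action on the $\mathbb{F}d$\ndash direction, is compatible with the Lorentzian metric, so that the dual is again a Lorentz symmetric space and not one of altered signature. These are exactly the points where the infinite dimensional theory departs from Helgason's, and they are supplied by the tameness results of Section~\ref{section:Kac-Moody_symmetric_spaces} together with the structure theory of involutions of the second kind.
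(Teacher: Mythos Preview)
Your proposal is correct and follows precisely the route the paper lays out: the paper does not give a self-contained proof of the duality theorem here (it is a survey, deferring details to \cite{Freyn09}), but the surrounding text makes clear that the intended argument is exactly the one you sketch --- the paper explicitly constructs OSAKAs, observes that ``Dualizing OSAKAs of the compact type, we get the OSAKAs of the noncompact type'' via $\mathcal{U}=\mathcal{K}\oplus\mathcal{P}\mapsto\mathcal{K}\oplus i\mathcal{P}$, and then invokes the correspondence between simply connected Kac-Moody symmetric spaces and OSAKAs. Your identification of the Heintze--Gro{\ss} theorem as the bijection on real forms, of $\widehat{\rho}^{*}$ as the Cartan involution, and of the type $I\leftrightarrow III$, $II\leftrightarrow IV$ pairing matches the paper's classification, and your closing paragraph correctly isolates the only genuinely infinite-dimensional issue (well-definedness of the OSAKA-to-space passage in the tame Fr\'echet category), which the paper likewise offloads to \cite{Freyn09}.
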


A complete classification of Kac-Moody symmetric spaces following the lines of the classification of finite dimensional Riemannian symmetric spaces follows from the correspondence between simply connected Kac-Moody symmetric spaces and OSAKAs~\cite{Freyn09}. 

Kac-Moody symmetric spaces have several conjugacy classes of flats. For our purposes the most important class are those of finite type. A flat is called of finite type iff it is finite dimensional. A flat is called of exponential type iff it lies in the image of the exponential map and it is called maximal iff it is not contained in another flat. Adapting a result of Bogdan Popescu~\cite{Popescu05} to our setting, and generalizing it to symmetric spaces of the noncompact type, we find:

\begin{theorem}
All maximal flats of finite exponential type are conjugate.
\end{theorem}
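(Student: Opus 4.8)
\noindent The plan is to pass from flats to abelian subspaces of the isotropy module, to prove the conjugacy of maximal finite\ndash dimensional abelian subspaces for the compact type by adapting Popescu's critical\ndash point argument for the $s$\ndash representation, and then to carry the result over to the noncompact type via the Cartan decomposition. First I would reduce to the algebra level: since the affine Kac-Moody group $H$ acts transitively on $M$, it suffices to fix the base point $o = eK$ with isotropy group $K$ and to show that any two maximal flats of finite exponential type through $o$ are $K$\ndash conjugate. Writing $\widehat{L}(\mathfrak{g},\sigma) = \mathcal{K}\oplus\mathcal{P}$ for the $\pm 1$\ndash eigenspace decomposition of the OSAKA involution, a flat of exponential type through $o$ is $\exp(\mathfrak{a})\cdot o$ for a finite\ndash dimensional abelian subspace $\mathfrak{a}\subset\mathcal{P}$, maximal precisely when $\mathfrak{a}$ is maximal among finite\ndash dimensional abelian subspaces of $\mathcal{P}$. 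Decomposing the OSAKA into irreducible factors reduces to the irreducible case; there one checks, using $[d,f] = f'$, that up to conjugation such an $\mathfrak{a}$ involves the loop variable only through constant loops, so that it contains a regular element and is, modulo the central and derivation directions, the intersection with $\mathcal{P}$ of a Cartan subalgebra of $\mathfrak{g}$.

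For the compact type $\widehat{L}(\mathfrak{g},\sigma)$ is a compact real form, $B := B_{(\mathfrak{g}_{\mathbb{C}},\sigma)}$ is negative definite on $\mathcal{K}$, and $K$ acts on $\mathcal{P}$ by the infinite\ndash dimensional $s$\ndash representation, which is polar \cite{terng95}. Given regular $X_1\in\mathfrak{a}_1$ and $X_2\in\mathfrak{a}_2$, I would consider $\varphi(k) = B(\mathrm{Ad}(k)X_1, X_2)$ along the orbit $K\cdot X_1$; after passing to a Hilbert\ndash space completion the properness and Fredholm property of the action provide an extremum of $\varphi$. At a critical point $k$ one has $B(\mathrm{Ad}(k)X_1, [Y,X_2]) = 0$ for all $Y\in\mathcal{K}$, hence $\mathrm{Ad}(k)X_1$ lies in the normal space to $K\cdot X_2$ at $X_2$; negative definiteness of $B$ on $\mathcal{K}$ identifies this normal space with the centralizer of $X_2$ in $\mathcal{P}$, which by regularity of $X_2$ equals $\mathfrak{a}_2$. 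Since $\mathrm{Ad}(k)X_1$ is again regular and $\mathrm{Ad}(k)\mathfrak{a}_1$ is a maximal abelian subspace of $\mathcal{P}$ through it, $\mathrm{Ad}(k)\mathfrak{a}_1 = \mathfrak{a}_2$; this is Popescu's argument \cite{Popescu05}.

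For the noncompact type let $\widehat{L}(\mathfrak{g},\sigma)_D = \mathcal{K}\oplus\mathcal{P}$ be a Cartan decomposition and $\mathcal{U} = \mathcal{K}\oplus i\mathcal{P}$ the associated compact real form. Because $[iX,iY] = -[X,Y]$, and $Z\in\mathcal{P}$ centralizes $\mathfrak{a}\subset\mathcal{P}$ iff $iZ$ centralizes $i\mathfrak{a}$, the assignment $\mathfrak{a}\mapsto i\mathfrak{a}$ is an inclusion\ndash preserving bijection between the finite\ndash dimensional abelian subspaces of $\mathcal{P}$ and those of $i\mathcal{P}\subset\mathcal{U}$, hence carries maximal ones to maximal ones, while $\mathrm{Ad}(k)$, $k\in K$, intertwines the two $K$\ndash actions. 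Applying the compact\ndash type result to $i\mathfrak{a}_1$ and $i\mathfrak{a}_2$ produces $k\in K$ with $\mathrm{Ad}(k)(i\mathfrak{a}_1) = i\mathfrak{a}_2$, hence $\mathrm{Ad}(k)\mathfrak{a}_1 = \mathfrak{a}_2$; translating back gives the conjugacy of the two flats. The Euclidean case is immediate, the symmetric space being a flat space homogeneous under translations.

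The main obstacle is the existence of the extremum in the compact case: the isotropy action lives on a tame Fréchet space, its invariant form on $\mathcal{P}$ is Lorentzian rather than Riemannian, and $K$ is noncompact. One must split off the low\ndash dimensional Lorentzian summand carried by the central and derivation generators, which lies in every maximal finite flat, and reduce to the compact loop group, where the metric is definite and the action proper and Fredholm, so that the minimax step is legitimate; one must also verify that ``finite exponential type'' selects exactly these finite\ndash dimensional subspaces. This is precisely where the Hilbert\ndash space model of Terng, Popescu's analysis, and the curvature control provided by Theorem~\ref{Kulkarni} are needed.
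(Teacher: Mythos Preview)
Your strategy coincides with what the paper indicates: the paper gives no detailed proof here, only the sentence ``Adapting a result of Bogdan Popescu \cite{Popescu05} to our setting, and generalizing it to symmetric spaces of the noncompact type''. Your reduction to maximal finite\ndash dimensional abelian subspaces of $\mathcal{P}$, the critical\ndash point argument on the $K$\ndash orbit for the compact type, and the passage to the noncompact type via $\mathfrak{a}\mapsto i\mathfrak{a}$ along the Cartan decomposition are exactly this programme spelled out.

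One correction: drop the appeal to Theorem~\ref{Kulkarni}. That result says that on a tame Fr\'echet Lorentz manifold a one\ndash sided bound on sectional curvature forces the curvature to be constant; since Kac-Moody symmetric spaces have nonconstant curvature, the theorem only tells you that the sectional curvature is \emph{unbounded} in both directions. It supplies no ``curvature control'' and plays no role in the conjugacy argument. What actually does the work in the compact step is that, once the Lorentzian summand $\mathbb{R}c\oplus\mathbb{R}d$ (which, as you note, lies in every maximal finite flat) is split off, the remaining $K$\ndash action on the loop part is proper Fredholm in the Hilbert model, so height functions on orbits satisfy the Palais--Smale condition and the extremum of $\varphi$ exists. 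That is Popescu's and Terng's input, not Kulkarni's.
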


The isotropy representations of Kac-Moody symmetric spaces correspond exactly to the $s$\ndash representations for involutions on affine Kac-Moody algebras that are studied in \cite{Gross00} and induce hence polar actions on Fréchet\ndash\ resp.\ Hilbert spaces. This is the subject of the next section.

\section{Polar actions}
\label{section:Polar_actions_on_Hilbert_and_Frechet_spaces}

 Let $P(G, H):= \left\{g\in \hat{G}= H^1([0,1], G)\mid (g(0), g(1))\in H\subset G \times G\right\}$ denote the space of all $H^1$-Sobolev path in $G$ whose endpoints are in $H$ and let $V=H^0([0,1], \mathfrak{g})$ denote the space of all $H^0$-Sobolev path in $V$.

\noindent We quote the following theorem of \cite{terng95}:

\begin{theorem}
 Suppose the action of $H$ on $G$ is hyperpolar. Let $A$ be a torus section through $e$ and let $\mathfrak{a}$ denote its Lie algebra. Then
\begin{enumerate}
 \item the $P(G,H)$-action on $V$ is polar and the space $\widehat{\mathfrak{a}}=\{\widehat{a}|a\in \mathfrak{a}\}$ is a section, where $\widehat{a}:[0,1]\longrightarrow \mathfrak{g}$ denotes the constant map with value $a$. 
\item Let $N(\mathfrak{a})$ be the normalizer of $\mathfrak{a}$ in $P(G, H)$, $Z(\mathfrak{a})$ the centralizer. The quotient $N(\mathfrak{a})/Z(\mathfrak{a})$ of the normalizer of $\mathfrak{a}$ is an affine Weyl group.
\end{enumerate}
\end{theorem}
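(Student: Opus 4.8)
The plan is to build the gauge-group action of $P(G,H)$ on $V=H^0([0,1],\mathfrak g)$ explicitly and then transport the finite-dimensional hyperpolarity hypothesis through this action. Recall that $\widehat G=H^1([0,1],G)$ acts on $V$ by the gauge transformation $g\cdot u = gug^{-1}-g'g^{-1}$; this action is transitive with the ``parallel transport'' map $V\to G$, $u\mapsto E_u(1)$ (the endpoint of the solution of $E_u'=uE_u$, $E_u(0)=e$) being equivariant for the action of $\widehat G$ by left-right multiplication on $G$. One checks that the isotropy of $\widehat G$ at $u=0$ is the subgroup of constant loops, and that the restricted action of $P(G,H)$ on $V$ fibers over the $H$-action on $G$ via $u\mapsto E_u(1)$. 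First I would set up these facts carefully (they are standard from Terng's and Pinkall--Thorbergsson's work on the correspondence between path space actions and the finite-dimensional isometric action), since everything else is pulled back along the equivariant submersion $V\to G$.

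Second, I would verify that $\widehat{\mathfrak a}=\{\widehat a : a\in\mathfrak a\}$ is a section. Since $E_{\widehat a}(1)=\exp a$, the image of $\widehat{\mathfrak a}$ under the parallel transport map is exactly the torus section $A=\exp\mathfrak a$ of the hyperpolar $H$-action on $G$. A point $\widehat a$ is $P(G,H)$-regular iff $\exp a$ is $H$-regular, and at such points one computes the tangent space to the orbit as the image of $d$(gauge action), whose orthogonal complement in $V=H^0([0,1],\mathfrak g)$ is precisely the space of constant paths with value in the normal space $T_{\exp a}(H\cdot \exp a)^\perp = \mathfrak a$ — this is where the flatness of the section $A$ and the $H^1$-vs-$H^0$ Sobolev pairing enter, making the orthogonal complement finite-dimensional and equal to $\widehat{\mathfrak a}$. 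That $\widehat{\mathfrak a}$ meets every orbit (surjectivity of $\widehat{\mathfrak a}\to V/P(G,H)$) follows from the fact that $A$ meets every $H$-orbit in $G$ together with transitivity of the gauge action over a fixed endpoint; that it meets them orthogonally follows from the computation just sketched. This establishes part (1).

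For part (2), I would compute $N(\widehat{\mathfrak a})$ and $Z(\widehat{\mathfrak a})$ inside $P(G,H)$. An element $g\in P(G,H)$ normalizes $\widehat{\mathfrak a}$ iff it carries constant $\mathfrak a$-valued paths to constant $\mathfrak a$-valued paths under the gauge action. Writing this out, $g$ must have $g'g^{-1}$ lying in $\mathfrak a$ along $[0,1]$ up to the affine shift, which forces $g(t)=\exp(tX)n(t)$ with $X\in\mathfrak a$ and $n(t)$ in the finite-dimensional normalizer $N_H(\mathfrak a)$ of $\mathfrak a$ for the $H$-action, subject to the endpoint condition $(g(0),g(1))\in H$. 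The centralizer $Z(\widehat{\mathfrak a})$ consists of those acting trivially on $\widehat{\mathfrak a}$, i.e. the paths with values in the centralizer $Z_H(\mathfrak a)$ plus the ``vertical'' translations $\exp(tX)$, $X\in\mathfrak a$, that act trivially on constants. The quotient $N(\widehat{\mathfrak a})/Z(\widehat{\mathfrak a})$ is then generated by two pieces: the spherical Weyl group $N_H(\mathfrak a)/Z_H(\mathfrak a)$ of the finite-dimensional hyperpolar action, and a translation lattice coming from those $\exp(tX)$ with $\exp X$ lying in the kernel relevant to the endpoint condition — i.e. $X$ in (a scaled copy of) the integer lattice $\exp^{-1}(Z(G)\cap A)$ or the coroot lattice, depending on $H$. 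The semidirect product of this lattice with the spherical Weyl group is precisely an affine Weyl group $W_{\mathrm{aff}}=L\rtimes W$ in the sense of Section~\ref{section:reflection_groups}.

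The main obstacle I expect is the precise identification of the translation lattice and the verification that the action of $N(\widehat{\mathfrak a})/Z(\widehat{\mathfrak a})$ on $\widehat{\mathfrak a}\cong\mathfrak a$ is by \emph{affine} transformations with the correct crystallographic structure — i.e. matching the reflections $s_j(H_i)\subset\mathcal H$ condition. This requires knowing that the walls one gets from the non-regular orbits in $\widehat{\mathfrak a}$ (preimages under $u\mapsto E_u(1)$ of the reflection hyperplanes of the finite-dimensional hyperpolar action on $A$) are exactly the affine hyperplanes $\{a\in\mathfrak a : \alpha(a)\in 2\pi\mathbb Z\}$ for the roots $\alpha$, so that the reflections in these walls together with the finite Weyl group close up to the full affine Weyl group rather than a proper subgroup. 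One handles this by the explicit singular-orbit analysis: $\widehat a$ is $P(G,H)$-singular iff $\exp a$ is $H$-singular, the $H$-singular set in $A$ is the union $\bigcup_\alpha \ker(\alpha\bmod 2\pi)$, and the local isotropy representation at such $\widehat a$ contains the reflection in the corresponding affine wall. Granting the structure theory of hyperpolar actions on compact Lie groups (sections are flat tori, the generalized Weyl group acts as the corresponding affine Weyl group on the Lie algebra of the section), this closes the argument, with reference to \cite{terng95} for the details of the path-space formalism.
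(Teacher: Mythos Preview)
The paper does not prove this theorem; it is introduced with ``We quote the following theorem of \cite{terng95}'' and no argument is given beyond the citation. So there is no proof in the paper to compare against.

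That said, your sketch is essentially the argument one finds in Terng's paper itself: the gauge action of $\widehat G$ on $V$, the equivariant parallel-transport map $u\mapsto E_u(1)$ onto $G$ intertwining the $P(G,H)$-action with the $H$-action, the identification of $\widehat{\mathfrak a}$ with the preimage of the finite-dimensional section $A$, and the decomposition of $N(\widehat{\mathfrak a})/Z(\widehat{\mathfrak a})$ into a finite Weyl group piece and a translation lattice. Your outline is sound; the only places that would need care in a full write-up are (i) the precise convention for the parallel-transport ODE (you want the one making $\Phi(g\cdot u)=g(0)\Phi(u)g(1)^{-1}$, which is $E_u^{-1}E_u'=u$ rather than $E_u'=uE_u$ as you wrote), and (ii) the orthogonality computation, where one really uses that for $\xi$ in the Lie algebra of $P(G,H)$ the infinitesimal action at $\widehat a$ is $t\mapsto [\xi(t),a]-\xi'(t)$, and integration by parts against a constant $b\in\mathfrak a$ together with the boundary condition on $\xi$ and the fact that $\mathfrak a$ is perpendicular to the $H$-orbit through $e$ gives the vanishing. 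These are exactly the computations in \cite{terng95}, so for the purposes of this survey your proposal supplies more than the paper does.
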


Similar to finite dimensional polar representations, that are induced by the isotropy representations of symmetric spaces, the classical examples of polar actions on Hilbert space come from isotropy representations of Kac-Moody symmetric spaces:

\begin{theorem}
 Let $\widehat{L}(G, \sigma)$ be a Kac-Moody group of $H^1$\ndash loops and let $\rho:\widehat{L}(G, \sigma)\longrightarrow \widehat{L}(G, \sigma)$ be an involution of the second kind i.e.\ an involution such that on the Lie algebra level $d\rho(c)=-c$ and $d\rho(d)=-d$. Let $L(\mathfrak{g}, \sigma)=\mathcal{K}\oplus \mathcal{P}$ be the decomposition of $L(\mathfrak{g}, \sigma)$ into the $\pm 1$\ndash eigenspaces of $d\rho$. Then the restriction of the Adjoint action of $\textrm{Fix}(\rho)\subset \widehat{L}(G, \sigma)$ to the subspace 
$\{x \in \mathcal{P}| r_d=1$ and $|x|=-1\}$ where $r_d$ denotes the coefficient of $d$, is polar.
\end{theorem}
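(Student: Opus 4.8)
The plan is to deduce this from the finite-dimensional theory together with Terng's theorem quoted above, by identifying the Adjoint action of $\textrm{Fix}(\rho)$ on the relevant slice of $\mathcal{P}$ with a $P(G,H)$-action of the type covered by Terng's theorem. First I would fix the involution $\rho$ and pass to the loop group picture: an involution of the second kind on $\widehat{L}(G,\sigma)$ restricts to an involution $\varphi$ of the loop group $L(G,\sigma)$, which by the structure theory of \cite{Heintze09} can be written (up to conjugation) in the normalized form $\varphi(f)(t)=\varphi_0(f(-t))$ for a suitable involution $\varphi_0$ of $G$ compatible with $\sigma$. The fixed-point group $\textrm{Fix}(\varphi_0)=:H_0\subset G$ then acts on $G$ (hyper)polarly, since $(G,H_0)$ is a symmetric pair and the isotropy/$s$-representation of a symmetric space is polar with the Cartan subalgebras as sections. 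This is exactly the input hypothesis of Terng's theorem.

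Next I would make precise the identification of the acting objects. The condition $r_d=1$ cuts out an affine hyperplane transverse to the degenerate direction $\mathbb{R}d$, and restricting further to $r_c$ fixed (which the centrality of $c$ and the normalization $d\rho(c)=-c$ force on $\mathcal{P}$) leaves a slice of $\mathcal{P}$ that is naturally the space $V=H^0([0,1],\mathfrak{g})$ of Terng, with the level $r_d=1$ corresponding to Terng's affine level set of connections; the further condition $|x|=-1$ picks out a fixed-radius sphere, on which the action is still well defined because the metric is $\textrm{Ad}$-invariant (here one uses the Cartan--Killing form computation of the lemmas above, in particular that $B$ is positive definite on $\mathcal{P}$, so that $\{|x|=-1\}$ is a genuine sphere in $\mathcal{P}$). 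Under this dictionary the Adjoint action of $\textrm{Fix}(\rho)$ becomes the gauge action of $P(G,H)$ with $H=\{(g_0,\varphi_0(g_0))\}\subset G\times G$ the graph of $\varphi_0$ (for $\sigma=\textrm{id}$; in the twisted case one first embeds $L(\mathfrak{g},\sigma)\hookrightarrow L(\mathfrak{h},\textrm{id})$ as in the proof of the Cartan--Killing lemma and runs the same argument).

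Granting this identification, part (1) of the quoted theorem of \cite{terng95} gives immediately that the action is polar, with section the constant loops $\widehat{\mathfrak{a}}$ for $\mathfrak{a}$ a maximal abelian subspace, intersected with the sphere and the level set $r_d=1$. The main obstacle I anticipate is precisely the bookkeeping in this identification: one must check that the affine constraint $r_d=1$ is preserved by $\textrm{Fix}(\rho)$ (it is, because the $d$-coefficient is the momentum of the $S^1$-reparametrization which commutes with $\textrm{Fix}(\rho)$), that restricting to the sphere $\{|x|=-1\}$ does not destroy polarity (a section still meets every orbit orthogonally after intersecting with an invariant sphere, since the radial direction is orthogonal to the orbits), and that the regularity classes match up ($H^1$-loops in the Kac-Moody group versus $H^1$-paths in $P(G,H)$, $H^0$-sections versus $\mathcal{P}$). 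None of these is deep, but getting the normalization of $\rho$, the role of the central term $c$, and the two constraints $r_d=1$, $|x|=-1$ to fit Terng's framework cleanly is where the real work lies; once done, polarity is a citation. Finally I would remark that the normalizer-modulo-centralizer of $\widehat{\mathfrak{a}}$ is the affine Weyl group of $\mathfrak{g}$, by part (2) of the quoted theorem, which is consistent with the picture that Kac-Moody geometry is governed by affine reflection groups.
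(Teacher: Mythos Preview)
The paper's own proof is nothing more than a citation to \cite{Gross00}; your proposal is essentially a sketch of what that reference does, namely identifying the isotropy (Adjoint) action of $\textrm{Fix}(\rho)$ on the affine slice $\{r_d=1\}\cap\mathcal{P}$ with a gauge action of a suitable $P(G,H)$ on $V=H^0([0,1],\mathfrak{g})$ and then invoking Terng's theorem. This is exactly the mechanism the paper itself spells out in the paragraph following the theorem, where the four types of $P(G,H)$-actions are matched with the four types of Kac-Moody symmetric spaces.

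One small correction to your bookkeeping: your choice $H=\{(g_0,\varphi_0(g_0))\}$, the graph of $\varphi_0$, is the \emph{twisted diagonal} case, which in the paper's taxonomy corresponds to types II/IV (the group case). For a general involution of the second kind on a simple factor (types I/III), the normalized form $\varphi(f)(t)=\varphi_0(f(-t))$ leads, after unfolding the loop to a path on $[0,1]$, to $H=K_{\sigma_1}\times K_{\sigma_2}$ with $K_{\sigma_i}$ fixed-point groups of involutions, not to a graph; the diagonal and twisted-diagonal cases arise only when $\rho$ is the switch (possibly twisted) on a product $G\times G$. This does not affect the validity of your argument --- in each case the resulting $H$-action on $G$ is hyperpolar because $(G,H_0)$ is a symmetric pair, and Terng's theorem applies --- but the case split is worth getting right.
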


\noindent For the definition of involutions of the second kind and a classification of involutions see \cite{Heintze08} and \cite{Heintze09}.
\begin{proof}
~\cite{Gross00}
\end{proof}

We describe the four classes of $P(G,H)$\ndash actions more explicitly~\cite{HPTT}:
\begin{enumerate}
 \item The diagonal subgroup $H:=\{(x,x)\in G\times G\}$. Elements of $P(G,H)$ are closed loops. The polar actions defined by this group are induced by the isotropy representations of non-twisted Kac-Moody symmetric spaces of type II resp.\ type IV.
\item The twisted diagonal subgroup $H:=\{(x, \sigma(x))\in G\times G\}$, where $\sigma$ is a diagram automorphism. The associated polar actions are induced by the isotropy representations of twisted Kac-Moody symmetric spaces of types II resp.\ IV.
\item Let $H:=K\times K\subset G\times G$. The polar actions defined by these groups are induced by the isotropy representations of non-twisted Kac-Moody symmetric spaces of type I resp.\ type III.
\item Let $H=K_{\sigma_1}\times K_{\sigma_2}$ where $K_{\sigma_i}$ are fixed point groups of suitable involutions. The polar actions defined by these groups are induced by the isotropy representations of twisted Kac-Moody symmetric spaces of type I resp.\ type III.
\end{enumerate}

\section{Isoparametric submanifolds}
\label{section:Isoparametric_submanifolds}

\begin{definition}[$PF$-Submanifold]
 An immersed finite codimensional submanifold $M\subset V$ is proper Fredholm (PF) if the restriction of the end point map to a disk normal bundle of $M$ on any radius $r$ is proper Fredholm.
\end{definition}

\begin{definition}
 An immersed PF submanifold $f:M\longrightarrow V$ of a Hilbert space $V$ is called isoparametric if
\begin{enumerate}
 \item $codim(M)$ is finite.
 \item $\nu(M)$ is globally flat.
 \item for any parallel normal field $v$ on $M$, the shape operators $A_{v(x)}$ and $A_{v(y)}$ are orthogonally equivalent for all $x,y$ in $M$ 
\end{enumerate}
\end{definition}

The two main references are \cite{Terng89} and \cite{PalaisTerng88}. One can associate to isoparametric submanifolds in Hilbert spaces affine Weyl groups, that describe --- as in the finite dimensional case --- the positions of the curvature spheres.
\noindent Similar to the finite dimensional case, most examples for isoparametric submanifolds in Hilbert space arise from polar actions:

\begin{theorem}
 A principal orbit of a polar action on a Hilbert space is isoparametric.
\end{theorem}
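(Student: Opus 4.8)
The plan is to mimic the finite-dimensional argument, where principal orbits of polar representations are isoparametric, transported to the Hilbert-space setting via the reduction to $P(G,H)$-actions established in the previous section. First I would recall the general structure: if a group $\Gamma$ acts isometrically, polarly and properly Fredholm on a Hilbert space $V$ with section $\Sigma$, then for a principal orbit $N=\Gamma\cdot x$ with $x\in\Sigma$, the normal space $\nu_x(N)$ is contained in $\Sigma$ (this is the defining orthogonality property of a section, together with the fact that on principal orbits the section is ``full'' in the normal direction). Slice-type arguments in the polar setting then show that every normal vector to a principal orbit lies tangent to (a translate of) $\Sigma$, and since $\Sigma$ is flat and totally geodesic (being a section), the normal bundle $\nu(N)$ inherits global flatness: a parallel normal field is obtained simply by moving a fixed vector $v\in\nu_x(N)\subset\Sigma$ around by the $\Gamma$-action, which is isometric.

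Next I would verify the two nontrivial isoparametric conditions. Finiteness of codimension, $\operatorname{codim}(N)<\infty$, follows because the section $\Sigma$ of a proper Fredholm polar action is finite-dimensional — this is built into the $P(G,H)$-picture, where by the theorem of Terng quoted above the section is $\widehat{\mathfrak a}$, the constant maps with values in the finite-dimensional torus algebra $\mathfrak a$, hence $\dim\Sigma=\dim\mathfrak a<\infty$, and $\operatorname{codim}(N)\le\dim\Sigma$. For the shape operators: given a parallel normal field $v$ obtained by $\Gamma$-translation as above, for any two points $x,y=\gamma\cdot x$ on the principal orbit the isometry $\gamma$ maps $N$ to itself, sends $x$ to $y$, and carries $v(x)$ to $v(y)$; differentiating, $d\gamma_x$ intertwines the shape operators $A_{v(x)}$ and $A_{v(y)}$. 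Since $d\gamma_x:T_xN\to T_yN$ is a linear isometry, $A_{v(x)}$ and $A_{v(y)}$ are orthogonally equivalent, which is exactly condition (3). The properness/Fredholm hypotheses guaranteeing the PF condition are inherited from the $P(G,H)$-action on $V=H^0([0,1],\mathfrak g)$, which is the standard model (see \cite{terng95}, \cite{PalaisTerng88}).

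The main obstacle — and the only place genuinely infinite-dimensional care is needed — is establishing global flatness of $\nu(N)$ and, more precisely, that the normal connection is flat as an honest connection on an infinite-dimensional vector bundle (local flatness plus triviality of holonomy). In finite dimensions one invokes compactness of the isotropy representation on the slice; here one must instead use the proper Fredholm property to ensure the slice representation at a principal point is trivial (no nonzero fixed directions in the normal space would otherwise be forced) and that curvature-sphere data behaves well. I would handle this by arguing that the polar action restricted to a neighborhood transverse to $N$ is, up to the $\Gamma$-action, modelled on the section $\Sigma$ together with the finite affine Weyl group $N(\mathfrak a)/Z(\mathfrak a)$ acting on it — which is precisely the content of part (2) of the quoted Terng theorem — so that the normal bundle, its connection, and the shape operators are all pulled back from this finite-dimensional affine picture, where flatness and constancy of principal curvatures are classical. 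This reduces the whole statement to the finite-dimensional theory of isoparametric submanifolds associated to affine Weyl groups, completing the proof.
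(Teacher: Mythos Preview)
The paper does not actually supply a proof of this theorem: it is stated without argument in Section~\ref{section:Isoparametric_submanifolds}, with the surrounding references to \cite{Terng89} and \cite{PalaisTerng88} serving as the justification. So there is no ``paper's own proof'' to compare against; your sketch is being measured against the standard argument in those sources.

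Your outline is essentially that standard argument and is sound in its broad strokes: at a principal point the normal space coincides with the tangent space to the section, equivariant normal fields exist because the principal isotropy acts trivially on the slice, and the shape operators at different points are conjugate via the group action. Two remarks are worth making. First, your justification of finite codimension and of normal flatness leans on the $P(G,H)$ model and the affine Weyl group quotient, but the theorem as stated concerns an arbitrary (proper Fredholm) polar action on Hilbert space; the argument should and does work in that generality without invoking the gauge-action picture---finite codimension is immediate from $\nu_x(N)=T_x\Sigma$ once the action is assumed Fredholm, and flatness follows from the equidistance of nearby orbits. Second, the step you flag as the ``main obstacle''---that the equivariant normal field is genuinely \emph{parallel}---is not really an infinite-dimensional issue: the end-point map $p\mapsto p+\widetilde v(p)$ sends $N$ onto another orbit, orbits of an isometric action are equidistant, and a normal field whose end-point map has this property is parallel. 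That argument is identical to the finite-dimensional one in \cite{PalaisTerng88} and does not require reducing to the finite affine Weyl chamber.
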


\noindent A partial converse also exists:

\begin{theorem}[Heintze-Liu]
 A (complete, connected, irreducible, full) isoparametric submanifold of a Hilbert space $V$ with codimension $\not= 1$ ($\not=2$ if dim $V < \infty$) is a principal orbit of a polar action.
\end{theorem}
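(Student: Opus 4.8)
The statement is the theorem of Heintze and Liu on homogeneity of infinite-dimensional isoparametric submanifolds (with the finite-dimensional case resting on Thorbergsson's theorem and the classification of the low-codimension examples). The plan is to manufacture, purely from the extrinsic geometry of $M\subset V$, a group $G\le\mathrm{Isom}(V)$ that preserves $M$ and acts transitively on it; since $M$ is complete, connected and full, $M=G\cdot x$ is then a principal orbit of the isometric $G$-action, the section being the affine normal flat $x+\nu_x M$, which meets every orbit orthogonally by definition of isoparametric.

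First I would recall the local structure theory from Terng's work (used already above): at each $x\in M$ there is a finite or countable family of \emph{curvature normals} $v_\alpha\in\nu_x M$ and associated \emph{curvature distributions} $E_\alpha=\ker\bigl(A_{(\cdot)}-\langle\,\cdot\,,v_\alpha\rangle\,\mathrm{Id}\bigr)$, which are smooth and integrable, with $T_xM=\overline{\bigoplus}_\alpha E_\alpha(x)$; the parallel normal fields are globally defined, the leaves of each $E_\alpha$ are finite-dimensional round spheres (finiteness coming from the proper Fredholm hypothesis), and the focal hyperplanes $\ell_\alpha=\{\,w\in\nu_xM:\langle w,v_\alpha\rangle=1\,\}$ generate an affine Weyl group $W$. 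The decisive point is that, because the shape operators are parallel, each curvature sphere is a round sphere of constant radius sitting inside an affine subspace of $V$, hence is intrinsically homogeneous.

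Next I would exploit this homogeneity of the curvature spheres to build $G$. For each $\alpha$ the leaves of $E_\alpha$ foliate $M$ by round spheres, and I would show — using parallelism of the shape operators together with the global flatness of $\nu M$ — that the intrinsic isometries of these leaves organize into ambient isometries of $V$ preserving $M$; this is the delicate \emph{patching} step, carried out in Heintze--Liu. Let $G$ be the group they generate. Then $G$ preserves $M$, and the orbit $G\cdot x$ contains every curvature sphere through $x$; iterating (move along a leaf of $E_{\alpha_1}$, then along a leaf of $E_{\alpha_2}$ through the new point, and so on) one reaches an open neighbourhood of $x$ in $M$, since $\overline{\bigoplus}_\alpha E_\alpha(x)=T_xM$. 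Hence every $G$-orbit is open, therefore also closed, and connectedness of $M$ forces $M=G\cdot x$. That $M$ is then a principal orbit of a polar action is immediate: $M$ has constant finite codimension and globally flat normal bundle, and $x+\nu_x M$ is a section.

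The main obstacle — and the source of the hypotheses — is the combination of the patching step with the claim that iterating the curvature-sphere motions actually sweeps out a full neighbourhood rather than a proper subset. Both are controlled by the structure of the affine Weyl group $W$: when $\operatorname{codim}M=1$ the arrangement of focal hyperplanes consists of a single parallel family, $W$ is too small, and genuinely non-homogeneous examples occur; for $\operatorname{codim}M\ge2$ in Hilbert space the curvature normals form an infinite set whose $W$ is an irreducible affine reflection group of rank $\ge2$, and a careful reduction to rank-two sub-configurations, propagating transitivity along chains of curvature spheres, yields the result. In the finite-dimensional setting the same mechanism needs $\operatorname{codim}\ge3$ (the extra excluded value $2$), and one substitutes Thorbergsson's building-theoretic argument, or Olmos's normal-holonomy argument, for the infinite-dimensional analysis. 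I expect the affine-Weyl-group bookkeeping in the patching and neighbourhood-sweeping steps to be by far the most technical part; everything else follows directly from the isoparametric condition.
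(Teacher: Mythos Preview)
The paper does not give a proof here at all: it simply writes ``For the proof see~\cite{HeintzeLiu}.'' and, a line later, remarks that ``In principle a proof along the lines of Thorbergsson should be possible also.'' So there is nothing to compare your argument against on the paper's side; your proposal is in fact considerably more detailed than what the paper offers.

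That said, your outline is a faithful sketch of the Heintze--Liu strategy: one uses Terng's curvature distributions and curvature spheres, extends the intrinsic isometries of each curvature sphere to ambient isometries of $V$ preserving $M$ (the hard ``patching'' step, which genuinely needs the rank~$\ge 2$ hypothesis via reductions to rank-two slices), and then argues that the group so generated is transitive on $M$. One point to tighten: your passage ``iterating \dots\ one reaches an open neighbourhood of $x$ in $M$, since $\overline{\bigoplus}_\alpha E_\alpha(x)=T_xM$. Hence every $G$-orbit is open'' is not quite right in the infinite-dimensional case. Each curvature sphere is finite-dimensional and there are only countably many $E_\alpha$, so finite compositions of curvature-sphere motions only sweep out a countable union of finite-dimensional pieces, which cannot be open in $M$. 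The actual transitivity argument in Heintze--Liu is more delicate: one shows the orbit is a complete, totally geodesic submanifold whose tangent space at $x$ is all of $T_xM$, and concludes via completeness rather than via openness. Your final paragraph correctly flags this step as the most technical one, so you are aware of the difficulty; just be careful not to invoke the finite-dimensional ``open orbit'' shortcut.
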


\noindent For the proof see  \cite{HeintzeLiu}.
In principle a proof along the lines of Thorbergsson should be possible also.

\section{Universal geometric twin buildings}
\label{section:Universal_geometric_twin_buildings}

For Kac-Moody symmetric spaces universal twin buildings take over the role played by spherical buildings for finite dimensional Riemannian symmetric spaces.

For algebraic loop groups (resp.\ affine Kac-Moody groups) there appear two constructions
of buildings in the literature:

\begin{itemize}
\item[-] If one replaces semisimple Lie groups (or more generally
reductive linear algebraic groups) by algebraic Kac-Moody groups, it
turns out that algebraic twin buildings take over the role played by spherical buildings for Lie groups.

\noindent As in the Lie group situation we want an equivalence between Borel subgroups of the Kac-Moody group and chambers of the building. Due to the fact that Kac-Moody groups have two conjugacy classes of Borel subgroups the associated ``building'' breaks up into two pieces: Such a a twin building consists of a pair  $\mathfrak{B}^+\cup \mathfrak{B}^-$ of buildings
that are ``twinned'': The twinning can be described in different ways: From the
point of view of apartments the twinning is most easily defined by the
introduction of a system of twin apartments, that is subcomplexes
$\mathcal{A}^+\cup \mathcal{A}^-\subset \mathfrak{B}^+\cup \mathfrak{B}^-$, consisting of
two apartments $\mathcal{A}^+$ and $\mathcal{A}^-$, such that $\mathcal{A}^+$ is contained in $\mathfrak{B}^+$ and $\mathcal{A}^-$ is contained in $\mathfrak{B}^-$.
Imposing some axioms similar to those used for spherical buildings, many 
features known from apartments in spherical buildings generalize to the
system of twin apartments~\cite{AbramenkoBrown08}. 

\item[-] For groups of algebraic loops there is a theory of affine buildings (but not
for twin buildings) developed from the point of view of loop groups~\cite{Mitchell88}. 
\end{itemize}

To associate twin buildings to Kac-Moody symmetric spaces the main
problem is again to unify algebraic and analytic aspects of the
theory: twin buildings associated to affine Kac-Moody groups consist
of pairs of Euclidean buildings. These are purely algebraic
constructions and thus they correspond only to the subgroup of algebraic
loops.  Written in an algebraic notion these groups are of the form $G(\mathbb{C}[z, z^{-1}])$, that is: groups of polynomial loops in $z$ and $z^{-1}$. 
Their affine Weyl groups act transitively on the chambers of
the apartments whereas the groups $G(\mathbb{C}[z, z^{-1}])$ act transitively
on the apartments.

\noindent Unfortunately a straightforward process of completion ---
even in only one direction $z$ or $z^{-1}$ --- destroys the twinned structure. The
classically used remedy can be found in Shrawan Kumar's book~\cite{Kumar02}:
A group that is completed only in one direction --- (let's say in the direction of
$z$: hence the Laurent polynomials in $z$ and $z^{-1}$ are replaced by holomorphic
functions with finite principal part) --- acts on the part of the twin
building corresponding to this direction (i.e.\ $\mathfrak{B}^+$). For our
purposes however it is not enough to restrict the theory to one half of the twin
building: On the one hand we need a completion that is symmetric in
$z$ and $z^{-1}$ in order to be able to define the involutions of the second kind
which are needed for Kac-Moody symmetric spaces. On the other hand
our long-term objective is a proof of Mostow rigidity and there are 
good rigidity results only for twin buildings themselves, but not for their affine
parts separately.

Our solution in \cite{Freyn09} is to define on the level of the building a
``completion'' of the two parts of the twin building that corresponds
to the completed groups. For different completions of the loop group (i.e.\ 
    $H^1$, $C^{\infty}$, analytic or holomorphic loops), the associated completions
of the building are different. 
\noindent The process of completing algebraic twin buildings leads to a simplicial complex
$\mathfrak{B}= \mathfrak{B}^+ \cup \mathfrak{B^-}$, whose (uncountable
    many) positive and negative connected components are affine buildings;
we call these objects ``universal geometric buildings''.

\subsection{Universal BN-pairs}

\begin{definition}[Universal geometric BN\ndash pair for $\widehat{L}(G,\sigma)$]

Let $\widehat{L}(G,\sigma)$ be an affine Kac-Moody group. 
Data $(\widehat{B}^+, \widehat{B}^-, N, W, S)$  is a twin BN\ndash pair for $\widehat{L}(G,\sigma)$ iff there are subgroups $\widehat{L}(G, \sigma)^+$ and $\widehat{L}(G, \sigma)^-$ of $\widehat{L}(G,\sigma)$ such that $\widehat{L}(G,\sigma)=\langle \widehat{L}(G, \sigma)^-, \widehat{L}(G, \sigma)^+ \rangle$ subject to the following axioms:
\begin{enumerate}
	\item $(\widehat{B}^+, N, W, S)$ is a BN\ndash pair for $\widehat{L}(G,\sigma)^+$ (called $B^+N$),	
	\item $(\widehat{B}^-, N, W, S)$ is a BN\ndash pair for $\widehat{L}(G,\sigma)^-$ (called $B^-N$),
	\item $(\widehat{B}^+\cap \widehat{L}(G, \sigma)^- , \widehat{B}^-\cap \widehat{L}(G, \sigma)^+, N, W, S)$ is a twin BN-pair for     $\widehat{L}(G, \sigma)^+\cap \widehat{L}(G, \sigma)^-$.
\end{enumerate}
\end{definition}

The subgroups $\widehat{L}(G, \sigma)^+$ and $\widehat{L}(G, \sigma)^-$of $\widehat{L}(G,\sigma)$ depend on the choice of $\widehat{B}^+$ and $\widehat{B}^-$. A choice of a different subgroup $\widehat{B}^{+'}$ (resp.\  $\widehat{B}^{-'}$) gives the same  subgroup $\widehat{L}(G, \sigma)^+$ (resp.\ $\widehat{L}(G, \sigma)^-$) of $\widehat{L}(G,\sigma)$ if $\widehat{B}^{+'}\subset \widehat{L}(G, \sigma)^+$ (resp. if $\widehat{B}^{-'}\subset \widehat{L}(G, \sigma)^-)$). For all positive (resp.\ negative) Borel subgroups the positive (resp.\ negative) subgroups $\widehat{L}(G, \sigma)^+$ resp $\widehat{L}(G, \sigma)^-$ are conjugate. Hence without loss of generality we can think of $\widehat{B}^{\pm}$ to be the standard positive (resp negative) affine Borel subgroup. The groups $\widehat{L}(G, \sigma)^{\pm}$ -- called the standard positive (resp.\ negative) subgroups are then characterized by the condition that $0$ (resp.\ $\infty$) is a pole for all elements.

\begin{remark}
For an algebraic Kac-Moody group a generalized BN\ndash pair coincides with a BN\ndash pair. Hence we get $\widehat{L}(G,\sigma)^+ =\widehat{L}(G,\sigma)^-=\widehat{L}(G,\sigma)$.
\end{remark}

\noindent We use the equivalent definition for the loop groups $L(G, \sigma)$.

\begin{lemma}
\label{bruhatforloopgroups}
\begin{enumerate}
  \item The groups $L(G,\sigma)^+$ (resp.\ $L(G,\sigma)^-$) have a positive (resp.\ negative) Bruhat decomposition and a Bruhat twin decompositions.
  \item The group  $L(G,\sigma)$ has a Bruhat twin decomposition but no Bruhat decomposition.
\end{enumerate}
\end{lemma}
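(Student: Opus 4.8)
The plan is to translate both items into the language of $BN$\ndash pairs via the Bruhat decomposition theorem of Section~\ref{section:the_finite_dimensional_blue_print}, and then to track how the Bruhat and Birkhoff double cosets behave when one passes from algebraic loops to holomorphic ones. The single analytic engine is Birkhoff factorization for holomorphic loops on $\mathbb{C}^{*}$ (\cite{PressleySegal86}; equivalently Grothendieck splitting of $G_{\mathbb C}$\ndash bundles on $\mathbb{P}^{1}$, which by GAGA covers holomorphic and not merely algebraic transition functions): every $g\in L(G,\sigma)$ factors as $g=g_{-}\cdot\dot w\cdot g_{+}$ with $g_{+}$ in the standard positive subgroup, $g_{-}$ in the standard negative subgroup and $\dot w$ a representative of some $w\in W$ in the affine Weyl group, the element $w$ and the Iwahori\ndash level normal forms of $g_{\pm}$ being uniquely determined. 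The whole proof is then a bookkeeping of where the singularities of $g$ at $0$ and at $\infty$ are absorbed.

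For part~(1) I would fix a standard Iwahori subgroup $\widehat B^{+}\subset L(G,\sigma)^{+}$, its counterpart $\widehat B^{-}\subset L(G,\sigma)^{-}$, the usual affine extension $N$ of the torus normalizer of $G_{\mathbb C}$, and the affine Coxeter system $(W,S)$. For the \emph{algebraic} loop group $G(\mathbb C[z,z^{-1}])$ the positive and negative Iwahori\ndash Bruhat decompositions $\coprod_{w\in W}\widehat B^{\pm}w\widehat B^{\pm}$ and the Birkhoff (twin) decomposition $\coprod_{w\in W}\widehat B^{-}w\widehat B^{+}$ are classical (Iwahori--Matsumoto, Garland, Tits; see \cite{Tits84}, \cite{Kumar02}). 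These survive completion on the relevant side because the $W$\ndash label and the double coset of a loop in $L(G,\sigma)^{+}$ are read off from its Iwahori\ndash level Birkhoff normal form, which depends on only finitely many Laurent coefficients; hence completion neither adds nor merges double cosets. This yields the positive (resp.\ negative) Bruhat decomposition and the Bruhat twin decomposition of $L(G,\sigma)^{+}$ (resp.\ $L(G,\sigma)^{-}$); on the algebraic core $L(G,\sigma)^{+}\cap L(G,\sigma)^{-}$ the twin decomposition restricts to the honest twin $BN$\ndash pair of Tits.

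For part~(2), the Bruhat twin decomposition of $L(G,\sigma)$ is the universal geometric twin $BN$\ndash pair of the preceding definition, assembled from part~(1): axioms (1) and (2) are the positive and negative Bruhat decompositions of $L(G,\sigma)^{\pm}$, axiom (3) is Tits's twin $BN$\ndash pair on $L(G,\sigma)^{+}\cap L(G,\sigma)^{-}$; equivalently fine Birkhoff factorization gives the set\ndash level decomposition $L(G,\sigma)=\coprod_{w\in W}\widehat B^{-}w\widehat B^{+}$. A genuine Bruhat decomposition $L(G,\sigma)=\coprod_{w}\widehat B w\widehat B$ for a single $BN$\ndash pair, however, cannot exist: such a $\widehat B$ is holomorphic at one of the two punctures, and each $\dot w$ contributes only a zero or a pole of bounded order there, so every loop in $\bigcup_{w}\widehat B w\widehat B$ is meromorphic at that puncture — whereas $L(G,\sigma)$ contains holomorphic loops with an essential singularity there, for instance $z\mapsto\exp(zX)$ with $0\neq X\in\mathfrak g_{\mathbb C}$, and no such loop lies in any double coset. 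Equivalently, by the Bruhat decomposition theorem a Bruhat decomposition would endow $L(G,\sigma)$ with a single $BN$\ndash pair, hence with a \emph{connected} building, contradicting the fact (Section~\ref{section:Universal_geometric_twin_buildings}) that the associated geometric building has uncountably many connected components.

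The main obstacle is the persistence step in part~(1): one must show that the passage from $\mathbb C[z,z^{-1}]$\ndash points to holomorphic loops neither merges nor creates $\widehat B$\ndash double cosets. This needs (i) existence of an Iwahori\ndash level Birkhoff normal form for holomorphic — not merely algebraic — loops, which I would derive from Birkhoff factorization together with the finite\ndash dimensional Bruhat decomposition applied to the reductive factor of the relevant stratum, and (ii) uniqueness of that normal form, resting on the uniqueness clause of Birkhoff factorization and a triangularity argument for the pro\ndash unipotent radicals of $\widehat B^{\pm}$. A smaller but genuine point is to make the essential\ndash singularity obstruction in part~(2) insensitive to the choice of Iwahori and of the representatives $\dot w$, so that it rules out every candidate $BN$\ndash pair and not merely the standard one.
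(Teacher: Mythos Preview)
Your proposal is correct and its analytic core---Birkhoff factorization for holomorphic loops, together with the essential-singularity obstruction for the negative claim in part~(2)---is exactly what the paper invokes, though the paper's own proof is a two-sentence pointer: part~(1) ``by definition'' (the universal geometric $BN$-pair axioms already stipulate that $(\widehat B^{\pm},N,W,S)$ is a $BN$-pair for $L(G,\sigma)^{\pm}$, so Bruhat is immediate) with the twin decomposition obtained by restriction from part~(2), and part~(2) simply cited as ``a restatement of the decomposition results in chapter~8 of~\cite{PressleySegal86}''. Your write-up therefore unpacks the content behind that citation rather than offering a different route; the persistence step you flag as the main obstacle is precisely what Pressley--Segal establish directly for the analytic groups, so you may shortcut your completion-from-the-algebraic-case argument by quoting their Bruhat and Birkhoff decompositions for holomorphic loops, and your essential-singularity example $z\mapsto\exp(zX)$ is a clean justification for the ``no Bruhat decomposition'' clause that the paper leaves implicit.
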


\begin{proof}
The Bruhat decompositions in the first part follow by definition, the Bruhat twin decomposition by restriction and the second part.
The second part is a restatement of the decomposition results in chapter $8$ of~\cite{PressleySegal86}. 
\end{proof}

\noindent Compare also the similar decomposition results stated in \cite{Tits84}.

\begin{theorem}[Bruhat decomposition]
Let $\widehat{L}(G, \sigma)$ be an affine Kac-Moody group with affine Weyl group $W_{\textrm{aff}}$.
Let furthermore $\widehat{B}^\pm$ denote a positive (resp.\ negative) Borel group. There are decompositions

$$\widehat{L}(G, \sigma)^+=\coprod_{w\in W_{\textrm{aff}}}\widehat{B}^+w\widehat{B}^+\hspace{10pt}
\textrm{and}\hspace{10pt}
\widehat{L}(G, \sigma)^-=\coprod_{w\in W_{\textrm{aff}}}\widehat{B}^-w\widehat{B}^-\,.
$$
\end{theorem}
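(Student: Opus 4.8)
The plan is to reduce the statement to the Bruhat decomposition attached to an abstract $BN$\ndash pair. By axiom~(1) in the definition of a universal geometric $BN$\ndash pair, $(\widehat{B}^+, N, W, S)$ is an ordinary $BN$\ndash pair for the group $\widehat{L}(G,\sigma)^+ = \langle \widehat{B}^+, N\rangle$, with $T = \widehat{B}^+\cap N \lhd N$ and $W = N/T$. The classical structure theory of $BN$\ndash pairs (the abstract analogue of the Bruhat decomposition theorem quoted above; see \cite{Bourbaki02}, \cite{AbramenkoBrown08}) then gives directly
$$\widehat{L}(G,\sigma)^+ = \coprod_{w\in W}\widehat{B}^+ w\widehat{B}^+\,,$$
the union being disjoint. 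For the write-up I would recall the two ingredients. Surjectivity: axiom~(3), $C(s)C(w)\subseteq C(w)\cup C(sw)$, shows that $\bigcup_{w\in W}\widehat{B}^+w\widehat{B}^+$ is stable under left multiplication by $\widehat{B}^+$ and by each $s\in S$, hence by all of $N$, so it is a subgroup; as it contains $\widehat{B}^+$ and $N$ it equals $\langle\widehat{B}^+,N\rangle = \widehat{L}(G,\sigma)^+$. In particular every double coset $\widehat{B}^+w\widehat{B}^+$ already lies inside $\widehat{L}(G,\sigma)^+$ and does not spill into the ambient group $\widehat{L}(G,\sigma)$. Disjointness: $\widehat{B}^+w\widehat{B}^+ = \widehat{B}^+w'\widehat{B}^+$ forces $w = w'$, proved by induction on $l(w)$ using axiom~(4), $s\widehat{B}^+s\not\subseteq\widehat{B}^+$.

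It remains to identify $W = N/T$ with the affine Weyl group $W_{\textrm{aff}} = L\rtimes W_{\textrm{fin}}$. Here $T$ is the standard maximal torus of $\widehat{L}(G,\sigma)$ --- the finite torus of $G$ together with the $\mathbb{C}^*$\ndash factors attached to $c$ and $d$ --- and $N$ its normalizer, so that $N/T$ is, by definition of the Weyl group of a Kac-Moody group, the affine Weyl group. Concretely one passes to the loop group $L(G,\sigma)$ and reads off $N/T\cong \pi_1(T_G)\rtimes W_{\textrm{fin}}$ --- the semidirect product of the coroot lattice with the finite Weyl group --- from the computation of the normalizer of the torus in \cite{PressleySegal86}, chapter~5, or \cite{Kac90}. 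The negative statement is obtained verbatim, replacing axiom~(1) by axiom~(2) and $\widehat{B}^+$ by $\widehat{B}^-$.

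An alternative, essentially equivalent route avoids the abstract machinery and builds on Lemma~\ref{bruhatforloopgroups}, which already furnishes the positive (resp.\ negative) Bruhat decomposition $L(G,\sigma)^\pm = \coprod_{w\in W_{\textrm{aff}}} B^\pm wB^\pm$ of the loop groups themselves. One then lifts through the torus extension: projecting out the central $c$\ndash direction (which, being central, lies in $\widehat{B}^\pm$) and using that the semidirect $d$\ndash factor also lies in $\widehat{B}^\pm$, the double cosets of $\widehat{L}(G,\sigma)^\pm$ are exactly the preimages of those of $L(G,\sigma)^\pm$, and both the covering of the group and the disjointness are preserved.

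Nothing in the derivation above is hard; the genuine content --- and the point I would expect to be the main obstacle --- is the \emph{input}, namely that $\widehat{L}(G,\sigma)^\pm$ really do carry a $BN$\ndash pair with Weyl group $W_{\textrm{aff}}$, equivalently that the loop-group decompositions of Lemma~\ref{bruhatforloopgroups} hold. These rest on the structural decomposition results for completed loop groups in \cite{PressleySegal86}, chapter~8, and on the fact that completing in the single variable $z$ (resp.\ $z^{-1}$) damages only one half of the twinning while leaving the Bruhat decomposition of that half intact. The care required there is precisely to stay within $\widehat{L}(G,\sigma)^+$ rather than the full Kac-Moody group $\widehat{L}(G,\sigma)$, which --- as Lemma~\ref{bruhatforloopgroups} records --- admits no honest Bruhat decomposition at all.
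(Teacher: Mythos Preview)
Your proposal is correct and takes essentially the same approach as the paper: the paper's proof is the single line ``This is a consequence of Lemma~\ref{bruhatforloopgroups}'', and that lemma in turn says the Bruhat decompositions ``follow by definition'' --- i.e.\ from axiom~(1) of the universal geometric $BN$\ndash pair, exactly your first route --- with the analytic input coming from chapter~8 of \cite{PressleySegal86}. Your alternative route via Lemma~\ref{bruhatforloopgroups} and lifting through the torus extension is literally the paper's argument; the extra detail you supply (the abstract surjectivity/disjointness sketch and the identification $W\cong W_{\textrm{aff}}$) simply unpacks what the paper leaves implicit.
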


\begin{proof}
This is a consequence of lemma \ref{bruhatforloopgroups}.
\end{proof}

\begin{theorem}[Bruhat twin decomposition]
Let $\widehat{L}(G, \sigma)$ be an affine algebraic Kac-Moody group with affine Weyl group $W_{\textrm{aff}}$.
Let furthermore $\widehat{B}^\pm$ denote a positive and its opposite negative Borel group. There are two decompositions
 $$\widehat{L}(G,\sigma)=\coprod_{w\in W_{\textrm{aff}}}\widehat{B}^{\pm}w\widehat{B}^{\mp}\,.$$ 
\end{theorem}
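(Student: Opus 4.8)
The plan is to reduce the statement to the loop group $L(G,\sigma)$, where Lemma~\ref{bruhatforloopgroups}(2) already provides a Bruhat twin decomposition (a restatement of chapter~8 of~\cite{PressleySegal86}), and then to transport it upward. The passage from $L(G,\sigma)$ to the full Kac\ndash Moody group is harmless: the two extra torus directions $\mathbb{C}c$ and $\mathbb{C}d$ both lie in the standard maximal torus $\widehat{T}$, which is contained in $N$ and in both $\widehat{B}^{+}$ and $\widehat{B}^{-}$; hence the defining data of the twin BN\ndash pair of $L(G,\sigma)$ extend to a twin BN\ndash pair $(\widehat{B}^{+},\widehat{B}^{-},N,W_{\textrm{aff}},S)$ of $\widehat{L}(G,\sigma)$ with the same affine Weyl group. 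Moreover, since the group here is algebraic, the Remark above gives $\widehat{L}(G,\sigma)^{+}=\widehat{L}(G,\sigma)^{-}=\widehat{L}(G,\sigma)$, so there is only one group in play. Finally the automorphism $z\mapsto z^{-1}$ (which exchanges the punctures $0$ and $\infty$) interchanges $\widehat{B}^{+}$ with $\widehat{B}^{-}$ and normalizes $N$, so it swaps the two asserted decompositions; it therefore suffices to establish $\widehat{L}(G,\sigma)=\coprod_{w\in W_{\textrm{aff}}}\widehat{B}^{+}w\widehat{B}^{-}$.

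Granting the twin BN\ndash pair, disjointness of the cells is formal and runs as in the ordinary BN\ndash pair case. The twin axioms (see~\cite{AbramenkoBrown08}, \cite{Tits84}) produce a well\ndash defined codistance $\delta^{*}\colon\mathcal{C}^{+}\times\mathcal{C}^{-}\to W_{\textrm{aff}}$ between the two chamber sets $\mathcal{C}^{\pm}=\widehat{L}(G,\sigma)/\widehat{B}^{\pm}$, and $\delta^{*}(g\widehat{B}^{+},h\widehat{B}^{-})$ depends only on the double coset $\widehat{B}^{+}g^{-1}h\widehat{B}^{-}$; since $\delta^{*}(\widehat{B}^{+},w\widehat{B}^{-})=w$, this forces $\widehat{B}^{+}w\widehat{B}^{-}=\widehat{B}^{+}w'\widehat{B}^{-}\Rightarrow w=w'$ and hence the disjointness of distinct cells. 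Equivalently one can induct on $l(w)$ using the twin analogue of axiom~(3) of a BN\ndash pair to pin down the Weyl label.

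The real content is surjectivity: every element of $\widehat{L}(G,\sigma)$ must lie in some cell $\widehat{B}^{+}w\widehat{B}^{-}$, which by the reductions above amounts to factoring an arbitrary loop $g\in L(G,\sigma)$. For this I would invoke the Birkhoff factorization of~\cite{PressleySegal86}, chapter~8: a holomorphic loop $g\colon\mathbb{C}^{*}\to G_{\mathbb{C}}$ — in the algebraic case a Laurent-polynomial loop — factors as $g=g_{-}\cdot\lambda\cdot g_{+}$, with $g_{+}$ extending holomorphically across $0$, $g_{-}$ extending across $\infty$ and normalized there, and $\lambda\in\mathrm{Hom}(\mathbb{C}^{*},T)$ a coweight. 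Feeding $g_{+}(0)$ and $g_{-}(\infty)$ through the ordinary Bruhat/Gauss decomposition of the finite group $G_{\mathbb{C}}$ absorbs the spherical Weyl group $W$, and combining this with the lattice of coweights yields an element $w$ of $W_{\textrm{aff}}=L\rtimes W$; one then checks that the surviving "negative" factor lies in $\widehat{B}^{-}$ (pole confined to $\infty$, correct triangular shape) and the "positive" factor in $\widehat{B}^{+}$, so that $g\in\widehat{B}^{+}w\widehat{B}^{-}$. For twisted $\sigma$ I would reduce to $\sigma=\mathrm{id}$ by the embedding $L(G,\sigma)\hookrightarrow L(H,\mathrm{id})$ used elsewhere in this article, intersecting the factorization with the fixed-point group of the corresponding finite-order automorphism, which preserves $\widehat{B}^{\pm}$ and $N$.

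I expect this last step to be the main obstacle: extracting the Birkhoff factorization with enough control over the principal and polar parts that the three factors land in $\widehat{B}^{-}$, $N$ and $\widehat{B}^{+}$ \emph{precisely} — not merely in larger opposite parabolic-type subgroups — and handling the twisted case carefully. Everything else (the torus bookkeeping of the first paragraph and the disjointness argument) is routine once the twin BN\ndash pair structure of Lemma~\ref{bruhatforloopgroups} is available.
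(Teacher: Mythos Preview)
The paper gives no explicit proof of this theorem; it is stated immediately after the Bruhat decomposition theorem (whose one-line proof is ``This is a consequence of lemma~\ref{bruhatforloopgroups}'') and is evidently meant to follow in the same way from part~(2) of that lemma, whose proof in turn reads ``The second part is a restatement of the decomposition results in chapter~8 of~\cite{PressleySegal86}.'' So the paper's argument, unpacked, is precisely: Birkhoff factorization for $L(G,\sigma)$ from Pressley--Segal, then pass to the two torus extensions.

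Your proposal is therefore the same approach as the paper's, only written out in far more detail than the paper bothers with: you make explicit the reduction to the loop group, the harmlessness of the central and semidirect extensions, the $z\mapsto z^{-1}$ symmetry, and the disjointness via the codistance. All of that is correct and is exactly what the paper's one-line citation is silently invoking. Your caveat about the ``main obstacle'' --- landing the Birkhoff factors precisely in $\widehat{B}^{\mp}$ --- is well taken, but the paper simply outsources that to~\cite{PressleySegal86}, chapter~8, and you may do the same.
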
 

\begin{remark}
Note that the Bruhat twin decomposition is defined on the whole group $\widehat{L}(G,\sigma)$. This translates into the fact that any two chambers in $\mathfrak{B}^+$ resp.\ $\mathfrak{B}^-$ have a well-defined Weyl codistance. In contrast Bruhat decomposition are only defined for subgroups. This translates into the fact that there are positive (resp.\ negative) chambers without a well-defined Weyl distance.
\end{remark}

\begin{example}
\label{Kumarsexample}
Kumar studies Kac-Moody groups and algebras that are completed ``in one direction''. In the setting of affine Kac-Moody groups of holomorphic loops this means: holomorphic functions with finite principal part. There is an associated twin BN-pair; the positive Borel subgroups are completed affine Borel subgroups while the negative ones are the algebraic affine Borel subgroups. Thus for a universal geometric twin BN-pair we have to use: $\widehat{L}(G,\sigma)^+=\widehat{L}(G,\sigma)$ and $\widehat{L}(G,\sigma)^-=\widehat{L_{alg}G}^{\sigma}$~\cite{Kumar02}.
\end{example}

\begin{lemma}
The intersection $\widehat{L}(G,\sigma)^0$ of  $\widehat{L}(G,\sigma)^+$ with  $\widehat{L}(G,\sigma)^-$ is isomorphic to the group of algebraic loops
$$\widehat{L}(G,\sigma)^0\simeq \widehat{L_{alg}G}^{\sigma}$$
\end{lemma}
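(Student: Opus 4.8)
The plan is to prove the lemma by unwinding the definitions of the two standard subgroups and then checking that the extension data restrict compatibly. Fix a faithful matrix realization $G_{\mathbb{C}}\hookrightarrow GL(n,\mathbb{C})$, so that every $\widehat{f}\in\widehat{L}(G,\sigma)$ is represented by a matrix $f(z)$ of holomorphic functions on $\mathbb{C}^*$ together with its central and $d$\ndash coordinates. By the characterization of $\widehat{L}(G,\sigma)^{\pm}$ recalled above, $\widehat{f}\in\widehat{L}(G,\sigma)^+$ exactly when $z=0$ is at worst a pole of finite order of $f$, i.e.\ the Laurent expansion $f(z)=\sum_{k\in\mathbb{Z}}a_k z^k$ has $a_k=0$ for $k$ sufficiently negative (finite principal part at $0$); symmetrically $\widehat{f}\in\widehat{L}(G,\sigma)^-$ exactly when $z=\infty$ is at worst a pole of finite order, i.e.\ $a_k=0$ for $k$ sufficiently positive.

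First I would settle the set-theoretic identity. If $\widehat{f}\in\widehat{L}(G,\sigma)^0=\widehat{L}(G,\sigma)^+\cap\widehat{L}(G,\sigma)^-$, then its entrywise Laurent expansion has $a_k=0$ for $|k|$ large, so $f$ is a matrix of Laurent polynomials in $z,z^{-1}$; since the defining equations of $G_{\mathbb{C}}$ are polynomial, $f\in G_{\mathbb{C}}(\mathbb{C}[z,z^{-1}])=L_{alg}G_{\mathbb{C}}$, and because $\widehat{f}$ already satisfies the $\sigma$\ndash twisting condition it lies in $L_{alg}G^{\sigma}$. Conversely every algebraic loop is a Laurent polynomial, hence has finite principal part at both $0$ and $\infty$ and therefore lies in $\widehat{L}(G,\sigma)^+\cap\widehat{L}(G,\sigma)^-$. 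Thus the underlying loop group of $\widehat{L}(G,\sigma)^0$ is precisely $L_{alg}G^{\sigma}$.

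It then remains to identify the extension data, so that the bijection becomes an isomorphism of affine Kac-Moody groups. The central $S^1$\ndash (resp.\ $\mathbb{C}^*$\ndash) bundle of $\widehat{L}(G,\sigma)$ is built from the cocycle $\omega$ via triples $(g,p,z)$ of a loop, a connecting path, and a central coordinate; restricting to algebraic loops one may take the connecting paths inside $L_{alg}G^{\sigma}$, and $\omega$ evaluated on such loops and paths is exactly the cocycle defining $\widetilde{L_{alg}G}^{\sigma}$, so the bundle over $\widehat{L}(G,\sigma)^0$ is $\widetilde{L_{alg}G}^{\sigma}$. Finally the semidirect $d$\ndash factor acts by the rotation $z\mapsto wz$, which preserves the space of Laurent polynomials, hence restricts to the corresponding factor of the algebraic group. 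Assembling the two steps yields $\widehat{L}(G,\sigma)^0\simeq\widehat{L_{alg}G}^{\sigma}$ (compare also Example~\ref{Kumarsexample}, where $\widehat{L}(G,\sigma)^-$ already equals this group).

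I expect the Laurent-series argument itself to be essentially immediate; the step requiring genuine care is twofold: pinning down that the completion $\widehat{L}(G,\sigma)^{\pm}$ really cuts out \emph{finite} principal parts at $0$ (resp.\ $\infty$), which is part of the construction of universal geometric buildings in~\cite{Freyn09}, and verifying that the central and $d$\ndash extensions of the large Kac-Moody group restrict to the \emph{algebraic} extensions rather than to some larger object. Both points hold because the larger constructions are obtained from the algebraic ones purely by completion, so they agree verbatim on the algebraic part — but this is exactly where a careless argument would have a gap, since one is asserting an isomorphism of Kac-Moody groups, not merely of the underlying abstract loop groups.
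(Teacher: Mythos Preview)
Your argument is correct, but it takes a different route from the paper. The paper's proof is a single sentence: $\widehat{L_{alg}G}^{\sigma}$ is the maximal subgroup of $\widehat{L}(G,\sigma)$ having \emph{both} Bruhat decompositions. This works directly from the defining BN\ndash pair axioms: by definition $\widehat{L}(G,\sigma)^{\epsilon}$ is the subgroup for which $(\widehat{B}^{\epsilon},N,W,S)$ is a BN\ndash pair, so lying in the intersection is equivalent to admitting both a positive and a negative Bruhat decomposition, and Lemma~\ref{bruhatforloopgroups} (together with the decomposition results of chapter~8 of~\cite{PressleySegal86}) identifies this exactly with the algebraic loop group.

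You instead use the analytic description of $\widehat{L}(G,\sigma)^{\pm}$ (finite principal part at $0$ resp.\ $\infty$) and argue via Laurent expansions in a chosen matrix embedding that a loop in both must be a Laurent polynomial; you then check separately that the central cocycle and the rotation action restrict to the algebraic extensions. This is more elementary and makes the extension\ndash data step explicit, whereas the paper's one\ndash line proof treats that step as automatic once the loop\ndash group level is settled. The paper's approach has the advantage of being intrinsic to the BN\ndash pair framework (no matrix embedding, no coordinate Laurent series), and it is what actually ties the lemma into the surrounding building\ndash theoretic narrative; your approach has the advantage of giving a concrete picture of what the intersection looks like and of not relying on the somewhat heavy Pressley--Segal decomposition results.
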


\begin{proof}
$\widehat{L_{alg}G}^{\sigma}$ is the maximal subgroup of $\widehat{L}(G,\sigma)$ having both Bruhat decompositions.
\end{proof}

\subsection{Universal geometric twin buildings}

\noindent We now define a universal geometric twin building using the $W$\ndash metric approach:

\begin{definition}[Universal geometric twin building]
Let $\widehat{L}(G,\sigma)$ be an affine Kac-Moody group with a universal geometric BN\ndash pair.
Define $\mathcal{C}^+:=\widehat{L}(G,\sigma)/B^+$ and $\mathcal{C}^-:=\widehat{L}(G,\sigma)/B^-$. 
\begin{enumerate}
\item The distance $\delta^{\epsilon}:\mathcal{C}^{\epsilon}\times \mathcal{C}^{\epsilon} \longrightarrow W$, $\epsilon \in \{+,-\}$ is defined  via the Bruhat decompositions:
$\delta^{\epsilon}(gB^{\epsilon} , fB^{\epsilon})=w$ iff $g^{-1}f$ is in the $w$\ndash class of the Bruhat decomposition of $\widehat{L}(G,\sigma)^{\epsilon}$. Otherwise it is $\infty$.
\item The codistance  $\delta^*:\mathcal{C}^+\times \mathcal{C}^- \cup \mathcal{C}^-\times \mathcal{C}^+ \longrightarrow W$ is defined  by $\delta^*(gB^- , fB^+)=w$ (resp.\ $\delta^*(gB^+ , fB^-)=w$) iff $g^{-1}f$ is in the $w$\ndash class of the corresponding Bruhat twin decomposition of $\widehat{L}(G,\sigma)$.
\end{enumerate}
\end{definition}

The elements of $\mathcal{C}^{\pm}$ are called the positive (resp.\ negative) chambers of the universal geometric twin building. The building is denoted by $\mathfrak{B}=\mathfrak{B^+}\cup\mathfrak{B}^-$. One can define a simplicial complex realization in the usual way. We define connected components in $\mathfrak{B}^{\pm}$ in the following way: Two elements $\{c_1, c_2\}\in \mathfrak{B}^{\pm}$ are in the same connected component iff $\delta^{\pm}(c_1, c_2)\in W_{\textrm{aff}}$. This is an equivalence relation. Denote the set of connected components by $\pi_0(\mathfrak{B})$ resp.\ $\pi_0(\mathfrak{B}^{\pm})$.

\begin{remark}
Let $L(G, \sigma)$ be an algebraic affine Kac-Moody group. Then the universal geometric twin building coincides with the algebraic twin building.
\end{remark}

\begin{lemmas}[Properties of a universal geometric twin building]~
\label{propertiesofgeometricuniversaltwinbuilding}
\begin{enumerate}
\item The connected components of $\mathfrak{B}^{\epsilon}$ are buildings of type $(W,S)$,
\item Each pair consisting of one connected component in $\mathfrak{B}^+$ and one in $\mathfrak{B}^-$ is an algebraic twin building of type $(W,S)$,
\item The connected components of $\mathfrak{B}^{\epsilon}$ are indexed by elements in $\widehat{L}(G, \sigma)/\widehat{L}(G,\sigma)^{\epsilon}$,
\item The action of $\widehat{L}(G, \sigma)$  on $\mathfrak{B}$ by left multiplication is isometric,
\item The Borel subgroups are exactly the stabilizers of the chambers, parabolic subgroups are the stabilizers of simplices,
\item $\widehat{L}(G, \sigma)^{\epsilon}$  acts on the identity component $\Delta^{\epsilon}_0$ by isometries,
\item Let $\Delta_0^+\cup \Delta_0^-$ be a twin building in $\mathfrak{B}$. There is a subgroup isomorphic to the group of algebraic loops, that acts transitively on $\Delta_0^+\cup \Delta_0^-$.
\end{enumerate}
\end{lemmas}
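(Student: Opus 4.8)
The plan is to reduce every clause to the standard dictionary between (twin) BN\ndash pairs and (twin) buildings in the $W$\ndash metric language of \cite{AbramenkoBrown08}, applied to the three Tits systems packaged in the universal geometric BN\ndash pair $(\widehat{B}^+,\widehat{B}^-,N,W,S)$: the BN\ndash pairs $(\widehat{B}^{\pm},N,W,S)$ for $\widehat{L}(G,\sigma)^{\pm}$, the genuine twin BN\ndash pair for $\widehat{L}(G,\sigma)^0:=\widehat{L}(G,\sigma)^+\cap\widehat{L}(G,\sigma)^-\simeq\widehat{L_{alg}G}^{\sigma}$, and the twin Bruhat decomposition of the full group $\widehat{L}(G,\sigma)$ supplied by Lemma~\ref{bruhatforloopgroups}.

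\emph{Parts (1), (3), (4), (6).} First I would observe that $f_1\widehat{B}^{\epsilon}$ and $f_2\widehat{B}^{\epsilon}$ lie in the same connected component iff $\delta^{\epsilon}(f_1\widehat{B}^{\epsilon},f_2\widehat{B}^{\epsilon})\in W_{\textrm{aff}}$, and by the definition of $\delta^{\epsilon}$ through the Bruhat decomposition of $\widehat{L}(G,\sigma)^{\epsilon}$ (which exhausts exactly that subgroup) this holds iff $f_1^{-1}f_2\in\widehat{L}(G,\sigma)^{\epsilon}$, i.e.\ iff $f_1\widehat{L}(G,\sigma)^{\epsilon}=f_2\widehat{L}(G,\sigma)^{\epsilon}$; hence the components are the $\widehat{L}(G,\sigma)^{\epsilon}$\ndash orbits on $\mathcal{C}^{\epsilon}$, indexed by $\widehat{L}(G,\sigma)/\widehat{L}(G,\sigma)^{\epsilon}$, which is part (3). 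The identity component $\Delta_0^{\epsilon}=\widehat{L}(G,\sigma)^{\epsilon}/\widehat{B}^{\epsilon}$ with its induced Weyl distance is a building of type $(W,S)$ on which $\widehat{L}(G,\sigma)^{\epsilon}$ acts by type\ndash preserving automorphisms, because $(\widehat{B}^{\epsilon},N,W,S)$ is a genuine BN\ndash pair for $\widehat{L}(G,\sigma)^{\epsilon}$ --- this is part (6). Part (4) is the identity $(gf_1)^{-1}(gf_2)=f_1^{-1}f_2$, which shows that left translation preserves $\delta^{\epsilon}$ and $\delta^*$; applying it, left translation by any $g$ carries $\Delta_0^{\epsilon}$ isometrically onto the component $g\widehat{L}(G,\sigma)^{\epsilon}$, so every component is a building of type $(W,S)$, which is part (1).

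\emph{Parts (2) and (5).} By (4) it is enough to treat the pair of identity components $\Delta_0^+\cup\Delta_0^-$. The codistance between them is read off from the twin Bruhat decomposition of $\widehat{L}(G,\sigma)$ (Lemma~\ref{bruhatforloopgroups}(2)), so $\delta^*$ is everywhere defined with values in $W_{\textrm{aff}}$; the twin\ndash building axioms tying $\delta^*$ to the two distances $\delta^{\pm}$ come from the third axiom of the universal geometric BN\ndash pair, which provides a genuine twin BN\ndash pair on $\widehat{L}(G,\sigma)^0\simeq\widehat{L_{alg}G}^{\sigma}$, together with the BN\ndash pair axioms for $\widehat{B}^{\pm}$ on $\widehat{L}(G,\sigma)^{\pm}$; feeding this into the twin BN\ndash pair $\leftrightarrow$ twin building dictionary of \cite{AbramenkoBrown08} gives (2). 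For (5), the stabilizer of the base chamber $e\widehat{B}^{\epsilon}$ under left multiplication is $\{g\mid g\widehat{B}^{\epsilon}=\widehat{B}^{\epsilon}\}=\widehat{B}^{\epsilon}$, and by the transitivity in (4) on the chambers of each component every chamber stabilizer is a conjugate of $\widehat{B}^{\epsilon}$, i.e.\ a Borel subgroup; the stabilizer of a simplex of type $J\subset S$ is the corresponding standard parabolic $\widehat{P}_J\supseteq\widehat{B}^{\epsilon}$ (and its conjugates), with the face relation becoming reverse inclusion --- again a formal consequence of the dictionary.

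\emph{Part (7) and the main obstacle.} Given a twin building $\Delta_0^+\cup\Delta_0^-\subset\mathfrak{B}$, by (2) and (4) we may take it to be the pair of identity components. Since $\widehat{L}(G,\sigma)^0\subseteq\widehat{L}(G,\sigma)^{\pm}$, part (6) shows it preserves and acts isometrically on both halves; and it acts transitively on the pairs of opposite chambers, for if $f\widehat{B}^+$ and $g\widehat{B}^-$ are opposite then $g^{-1}f\in\widehat{B}^-\widehat{B}^+$, so $b^+f^{-1}=b^-g^{-1}=:h$ for suitable $b^{\pm}\in\widehat{B}^{\pm}$, whence $h\in\widehat{L}(G,\sigma)^+\cap\widehat{L}(G,\sigma)^-=\widehat{L}(G,\sigma)^0$ carries the given opposite pair to the standard one; together with the action of $N$ on the standard twin apartment this yields strong transitivity of $\widehat{L}(G,\sigma)^0\simeq\widehat{L_{alg}G}^{\sigma}$ on $\Delta_0^+\cup\Delta_0^-$, which is (7). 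The one genuinely nonformal point is (2): by design $\widehat{L}(G,\sigma)$ has no single Bruhat decomposition, so the twinning of a pair of (possibly large) components cannot be imported from a BN\ndash pair on $\widehat{L}(G,\sigma)$ itself; one has to check directly that the globally defined $\delta^*$ interacts with the two separately defined Bruhat decompositions of $\widehat{L}(G,\sigma)^{\pm}$ exactly as the twin building axioms demand, the only control being the twin BN\ndash pair on the intersection $\widehat{L}(G,\sigma)^0$ --- this, rather than any computation, is the heart of the matter, and the remaining clauses are bookkeeping with the BN\ndash pair/building dictionary.
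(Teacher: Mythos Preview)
Your proposal is correct and follows essentially the same route as the paper: both arguments reduce every clause to the standard dictionary between (twin) BN\ndash pairs and (twin) buildings, using the Bruhat decompositions of $\widehat{L}(G,\sigma)^{\pm}$ for the distances and the twin Bruhat decomposition of $\widehat{L}(G,\sigma)$ for the codistance, together with the identity $(gf_1)^{-1}(gf_2)=f_1^{-1}f_2$ for isometry of the left action. Your organization differs slightly---you obtain (1) by translating the identity component (a building via the BN\ndash pair on $\widehat{L}(G,\sigma)^{\epsilon}$) rather than checking the building axioms directly, and you supply an explicit argument for (7) via transitivity on opposite pairs, which the paper's sketch omits---but the substance is the same, and your identification of (2) as the only genuinely nonformal step matches the paper's implicit emphasis.
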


\begin{proof}~
We sketch the proofs of some parts. For additional details~\cite{Freyn10d}
\begin{enumerate}
\item The relation defined by finite codistance is clearly symmetric and self-reflexive, transitivity follows by calculation. One has to check the axioms for a building.
\item Each pair consisting of a connected component in $\mathfrak{B}^+$ and one in $\mathfrak{B}^-$ fulfills the axioms of an algebraic twin building. As the Bruhat decomposition is defined on $\widehat{L}(G, \sigma)$, the codistance is defined between arbitrary chambers in $\mathfrak{B}^{\epsilon}$ resp.\ $\mathfrak{B}^{-\epsilon}$.
\item $\widehat{L}(G,\sigma)$ has a decomposition into subsets of the form $\widehat{L}(G,\sigma)^{\epsilon}$. Those subsets are indexed with elements in $\widehat{L}(G,\sigma)/\widehat{L}(G,\sigma)^{\epsilon}$. The class corresponding to the neutral element is $\widehat{L}(G,\sigma)^{\epsilon}\subset \widehat{L}(G,\sigma)$. Thus it corresponds to a connected component and a building of type $(W,S)$.  The result follows via translation by elements in $\widehat{L}(G,\sigma)/\widehat{L}(G,\sigma)^{\epsilon}$: a connected component of $\mathfrak{B}^{\epsilon}$ containing $fB^{\epsilon}$ consists of all elements $ f\widehat{L}(G,\sigma)^{\epsilon} B^{\epsilon}$ as $\delta(fhB^{\epsilon}, fh'B^{\epsilon})=w((fh)^{-1}fh)= w(h^{-1}f^{-1}fh')=w(h^{-1}h')\in W$ as $h, h'\in \widehat{L}(G, \sigma)^{\epsilon}$. 
\item $G$ acts isometrically on a twin building if the action on both parts preserves the distances and the codistance~\cite{AbramenkoBrown08}, 6.3.1; the result follows by direct calculation. 
\item The chamber corresponding to $fB^{\epsilon}$ is stabilized by the Borel subgroup $B_f^{\epsilon}:=fB^{\epsilon}f^{-1}$. The converse follows as each Borel subgroup is conjugate to a standard one. Analogous for the parabolic subgroups. 
\end{enumerate}

\end{proof}

\begin{theorem} [Embedding of twin buildings]

Denote by $H_{l, r}$ the intersection of the sphere of radius $l$  of a real affine Kac-Moody algebra $\widehat{M\mathfrak{g}}$ with the horospheres $r_d=\pm r$, where $r_d$ is the coefficient of $d$ in the Kac-Moody algebra.
  There is a $2$\ndash parameter family $(l,r)\in \mathbb{R}^+\times \mathbb{R}^+$of $\widehat{MG}$\ndash equivariant immersion of $\mathfrak{B^+\cup \mathfrak{B^-}}$ into  $\widehat{M\mathfrak{g}}$. It is defined by the identification of $H_{l, r}$ with $\mathfrak{B}$. The two complexes
 $\mathfrak{B}^+$ and $\mathfrak{B}^-$ are immersed into the two sheets of $H_{l, r}$.
\end{theorem}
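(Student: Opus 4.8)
The plan is to transplant the finite-dimensional construction of Section~\ref{subsection:spherical_buildings} --- where the spherical building $\mathfrak{B}_G=(G/T\times\overline\Delta)/\!\sim$ is realized inside the sphere $\mathfrak{g}_R=\mathfrak{g}\cap S_R$ by the adjoint orbit map --- to the affine Kac-Moody setting, with the sphere $\mathfrak{g}_R$ replaced by the Lorentzian level set $H_{l,r}$. First I would check that $H_{l,r}$ is $\widehat{MG}$-invariant: the Cartan-Killing form of $\widehat{M\mathfrak{g}}$ is $\mathrm{Ad}$-invariant, and the coefficient $r_d$ of $d$ is an $\mathrm{Ad}(\widehat{MG})$-invariant affine functional (the adjoint action of the loop part and of the central circle fixes $d$ modulo $M\mathfrak{g}\oplus\mathbb{R}c$, and the semidirect rotation fixes $d$), so both defining equations $\langle x,x\rangle=-l^2$ and $r_d=\pm r$ are preserved. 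The sign choice $r_d=\pm r$ splits $H_{l,r}$ into two sheets $H_{l,r}^{\pm}$, and these will carry $\mathfrak{B}^{+}$ and $\mathfrak{B}^{-}$ respectively.

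Next I would analyse the action inside a single flat. Choose a maximal flat $\widehat{\mathfrak{t}}=\mathfrak{t}\oplus\mathbb{R}c\oplus\mathbb{R}d$ of finite exponential type; by the conjugacy theorem quoted above all such flats are $\widehat{MG}$-conjugate. After fixing $r_d=\epsilon r$ the intersection $\widehat{\mathfrak{t}}\cap H_{l,r}^{\epsilon}$ is an affine space on which the affine Weyl group $W_{\mathrm{aff}}=N(\widehat{MT})/\widehat{MT}$ acts as a reflection group; a fundamental domain is a closed alcove $\overline\Delta$, whose walls $\Delta_{i}$ index the simple reflections $S$ and whose face poset reproduces the Coxeter complex $\Sigma(W_{\mathrm{aff}},S)$, i.e.\ a (thin) apartment. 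For a point $X$ lying in the intersection of the faces $\Delta_{i},\ i\in I$, the $\widehat{MG}$-stabiliser of $X$ is the standard parabolic $W_{I}\ltimes\widehat{MT}$ extended by the corresponding unipotent radical, exactly as in the finite-dimensional computation.

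Then I would assemble the orbit map. The map $\Phi^{\epsilon}\colon \widehat{MG}/\widehat{MT}\times\overline\Delta\longrightarrow H_{l,r}^{\epsilon}$, $(g\widehat{MT},X)\mapsto\mathrm{Ad}(g)X$, is well defined since $\widehat{MT}$ fixes $\overline\Delta$ pointwise, is $\widehat{MG}$-equivariant for the left action, and is injective on the interior $\widehat{MG}/\widehat{MT}\times\Delta$; collapsing the fibres over the boundary by the stabiliser description above yields a map from the simplicial complex $(\widehat{MG}/\widehat{MT}\times\overline\Delta)/\!\sim$. Using the affine Iwasawa decomposition $\widehat{B}^{\epsilon}=\widehat{MT}\cdot\widehat{A}\cdot\widehat{N}^{\epsilon}$ inside the subgroup $\widehat{L}(G,\sigma)^{\epsilon}$ one rewrites the identity component of $\mathcal{C}^{\epsilon}$ as $\widehat{L}(G,\sigma)^{\epsilon}/\widehat{MT}\cong \widehat{L}(G,\sigma)^{\epsilon}/\widehat{B}^{\epsilon}$, so that the connected component through the base alcove is precisely the building attached to the Bruhat decomposition of $\widehat{L}(G,\sigma)^{\epsilon}$, and the $\widehat{MG}$-translates produce the remaining components indexed by $\widehat{L}(G,\sigma)/\widehat{L}(G,\sigma)^{\epsilon}$ as in Lemma~\ref{propertiesofgeometricuniversaltwinbuilding}. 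This identifies the domain with $\mathfrak{B}^{\epsilon}$ and makes $\Phi^{\epsilon}$ the desired $\widehat{MG}$-equivariant immersion of $\mathfrak{B}^{\epsilon}$ into $H_{l,r}^{\epsilon}$; letting $(l,r)$ range over $\mathbb{R}^{+}\times\mathbb{R}^{+}$ gives the two-parameter family (the extra parameter, absent in the finite-dimensional case, is the horosphere level in the new Lorentzian direction), and equivariance across the family is the commuting square of Section~\ref{subsection:spherical_buildings} with $\mathrm{Ad}(h)$ on the bottom row; restricting to the algebraic loop subgroup recovers the usual embedding of the algebraic twin building.

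The hard part will be two linked points. First, justifying that $\widehat{\mathfrak{t}}\cap H_{l,r}^{\epsilon}$ really is a flat affine space carrying an honest $W_{\mathrm{aff}}$-action with alcove fundamental domain: one must control the Lorentzian level sets inside the flat and see that, after the horosphere slicing $r_d=\epsilon r$, the degenerate direction drops out and what remains is the standard affine reflection geometry. Second, one only obtains an \emph{immersion}, not an embedding: $\widehat{MG}$ is not transitive on all maximal flats (only on those of finite exponential type, by the theorem quoted above), so the image of $\Phi^{\epsilon}$ is the proper $\widehat{MG}$-invariant subset of $H_{l,r}^{\epsilon}$ swept out by the finite-type flats, and one must check that the restriction to $\mathfrak{B}^{\epsilon}$ is injective on each connected component while the whole map is only a tame Fréchet immersion into the ambient $H_{l,r}\subset\widehat{M\mathfrak{g}}$. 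The functional-analytic bookkeeping --- that $\widehat{MG}/\widehat{MT}$, $\mathcal{C}^{\epsilon}$ and $H_{l,r}$ are tame Fréchet manifolds and that $\Phi^{\epsilon}$ has the right regularity --- is routine given Theorem~\ref{mgcistame} but needs to be recorded.
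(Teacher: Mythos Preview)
The paper does not actually prove this theorem; it is stated as a result of \cite{Freyn09} in this survey, with the finite\ndash dimensional construction of Section~\ref{subsection:spherical_buildings} serving as the advertised blueprint. Your proposal is precisely the transplantation of that blueprint to the Kac-Moody setting, so in spirit it agrees with what the paper intends.

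One point deserves tightening. In the Iwasawa step you write $\widehat{L}(G,\sigma)^{\epsilon}/\widehat{MT}\cong \widehat{L}(G,\sigma)^{\epsilon}/\widehat{B}^{\epsilon}$, but $\widehat{L}(G,\sigma)^{\epsilon}$ is not the compact real form, and modding out by $\widehat{MT}$ versus by $\widehat{B}^{\epsilon}$ are not the same quotient of that group. What you need is the loop\ndash group Iwasawa decomposition (Pressley--Segal, chapter~8): the compact real form $\widehat{MG}_{\mathbb{R}}$ acts transitively on the full flag variety $\widehat{L}(G_{\mathbb{C}},\sigma)/\widehat{B}^{\epsilon}$ with isotropy $\widehat{MT}$, giving $\widehat{MG}_{\mathbb{R}}/\widehat{MT}\cong \widehat{L}(G_{\mathbb{C}},\sigma)/\widehat{B}^{\epsilon}=\mathcal{C}^{\epsilon}$. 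This is the correct analogue of $G/T\cong G_{\mathbb{C}}/B$ and is what makes the orbit map $\Phi^{\epsilon}$ hit all of $\mathfrak{B}^{\epsilon}$, including the disconnected pieces, from the single compact group action; the decomposition into connected components indexed by $\widehat{L}(G,\sigma)/\widehat{L}(G,\sigma)^{\epsilon}$ then appears on the target side, not in the domain of $\Phi^{\epsilon}$. With this correction your outline is sound, and your identification of the two genuinely delicate issues --- the affine geometry on $\widehat{\mathfrak{t}}\cap H_{l,r}^{\epsilon}$ after horosphere slicing, and the fact that one obtains only an immersion because the adjoint orbits of finite\ndash type flats do not exhaust $H_{l,r}$ --- matches the actual obstacles.
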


As in the finite dimensional situation, one can extend this result to the isotropy representations of Kac-Moody symmetric spaces. This gives embeddings of the universal geometric twin building into the tangential spaces. A consequence is the following

\begin{cor}
Points of the  isoparametric submanifolds, corresponding to Kac-Moody symmetric spaces,  are in bijection to chambers in one half of the universal twin building 
\end{cor}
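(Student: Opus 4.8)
The plan is to chain together two identifications that have already been set up in the paper: the embedding of the universal geometric twin building $\mathfrak{B}=\mathfrak{B}^+\cup\mathfrak{B}^-$ into the real affine Kac-Moody algebra $\widehat{M\mathfrak{g}}$ (the "Embedding of twin buildings" theorem), and the correspondence between points of the isoparametric submanifold attached to a Kac-Moody symmetric space and the geometry of the isotropy representation (which by the remark following that theorem extends the adjoint-action picture to the $s$-representation, exactly as in the finite-dimensional blue print). First I would fix the data: a Kac-Moody symmetric space $X=\widehat{L}(\mathfrak{g},\sigma)_D/\mathrm{Fix}(\widehat{\rho})$ of noncompact type with Cartan decomposition $\widehat{L}(\mathfrak{g},\sigma)=\mathcal{K}\oplus\mathcal{P}$, take $\mathfrak{k}=\mathrm{Fix}(\widehat{\rho})$ with associated Kac-Moody group $\widehat{K}=\mathrm{Fix}(\rho_*)$, and recall that the isotropy representation is the $\mathrm{Ad}$-action of $\widehat{K}$ on $\mathcal{P}$, which is polar by the theorem of Gro\ss{} quoted in Section~\ref{section:Polar_actions_on_Hilbert_and_Frechet_spaces}, and whose principal orbits are the isoparametric submanifolds in question by the theorem that principal orbits of polar actions on Hilbert spaces are isoparametric.

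Next I would run the embedding argument in this isotropy setting rather than the adjoint setting. As in the finite-dimensional blue print (Subsection~\ref{subsection:spherical_buildings}), one restricts the polar $\widehat{K}$-action on $\mathcal{P}$ to the level set $H_{l,r}=\{x\in\mathcal{P}\mid |x|=l,\ r_d(x)=r\}$ cut out by fixing both the norm and the coefficient $r_d$ of the degree derivation $d$; this is precisely the "essential", $1$-codimensional slice the introduction advertises. A fundamental domain for the $W_{\mathrm{aff}}$-action on a flat $\mathfrak{a}\subset\mathcal{P}$ (a section of the polar action, which exists by Terng's theorem) is an alcove, and the $W_{\mathrm{aff}}$-translates of this alcove tessellate $\mathfrak{a}\cap H_{l,r}$, matching the thin affine Coxeter complex; the $\widehat{K}$-translates then tessellate the whole sheet, producing the two halves $\mathfrak{B}^\pm$ according to the sign of $r$. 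Identifying a point $x\in H_{l,r}$ with the chamber whose stabilizer in $\widehat{K}$ is the parabolic generated by the reflections fixing $x$ gives an $\widehat{MG}$-equivariant immersion of $\mathfrak{B}$ into $H_{l,r}\subset\mathcal{P}$, exactly the statement of the preceding theorem transported from $\widehat{M\mathfrak{g}}$ to a tangent space. The bijection with \emph{one} half of the twin building comes from fixing the sign of $r_d$: a principal orbit lies in a single horosphere sheet, so only $\mathfrak{B}^+$ (say) is seen by that orbit, while the Bruhat--twin structure ensures the two sheets carry the two opposite conjugacy classes of Borel subgroups.

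Finally I would assemble the bijection: a principal orbit $\widehat{K}\cdot x_0\subset H_{l,r}$ meets each alcove of each flat in exactly one regular point (regularity of $x_0$ forces trivial stabilizer, and the section meets each orbit orthogonally in a $W_{\mathrm{aff}}$-orbit), so the orbit is in $\widehat{K}$-equivariant bijection with $\widehat{K}/\widehat{K}_{x_0}$, which by the chamber–Borel dictionary of Lemma~\ref{propertiesofgeometricuniversaltwinbuilding}(5) is exactly the chamber set $\mathcal{C}^+$ of one half of the universal geometric twin building. I expect the main obstacle to be the infinite-dimensional subtleties of the "one regular point per alcove" step: unlike the compact finite-dimensional case one must check that the $PF$-condition and the tame Fréchet (or Hilbert) structure make the principal orbit genuinely transverse to every flat and that no chamber of $\mathfrak{B}^+$ is missed — i.e.\ that the immersion of the theorem above is actually onto one sheet — and that the disconnectedness of $\mathfrak{B}^\pm$ (the $\pi_0$ indexed by $\widehat{L}(G,\sigma)/\widehat{L}(G,\sigma)^\pm$) is correctly accounted for, which is why the statement says "one half of the universal twin building" rather than a single connected building. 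All of this is already implicit in the cited results, so the corollary should follow by carefully transcribing the adjoint-orbit argument into the isotropy-representation language.
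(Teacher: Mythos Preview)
Your proposal is correct and follows essentially the same route as the paper: the paper presents the corollary as an immediate consequence of the Embedding of twin buildings theorem, noting only that ``one can extend this result to the isotropy representations of Kac-Moody symmetric spaces,'' and your argument is precisely the detailed unpacking of that sentence along the lines of the finite-dimensional blueprint in Subsection~\ref{subsection:spherical_buildings}. The one step you leave slightly implicit --- passing from $\widehat{K}/\widehat{K}_{x_0}$ to $\mathcal{C}^+=\widehat{L}(G,\sigma)/B^+$ --- requires the Kac-Moody Iwasawa decomposition rather than Lemma~\ref{propertiesofgeometricuniversaltwinbuilding}(5) alone, but this is exactly parallel to the identification $G/T\cong G_{\mathbb C}/B$ used in the finite-dimensional case and is available in the Kac-Moody setting.
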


This shows in particular that, from a geometric point of view, universal geometric twin buildings
are the correct generalization of spherical buildings for Kac-Moody
groups.

\subsection{Linear representations for universal geometric buildings}

The matrix representations for the classical Lie groups give rise to linear representations for the associated buildings: For example the building of type $A_{n-1}$ over $\mathbb{C}$ corresponds to the flag complex of subspaces in the vector space $V^n:=\mathbb{C}^n$. Buildings for the other classical groups correspond to complexes of \glqq special\grqq\ subspaces~\cite{Garrett97}.

In this section we describe the infinite dimensional flag representation for buildings of type $\widetilde{A}_{n-1}$. As linear representations for the loop groups, we use the operator representations studied in~\cite{PressleySegal86}, chapter 6. Subspaces are elements in suitable Grassmannians \cite{PressleySegal86} (chapter 7). Originally those Grassmannians were introduced by Mikio Sato in the context of integrable systems~\cite{Sato83}.

Let $H^{n}= L^2(S^1, \mathbb{C}^n)$ denote the separable Hilbert space of square summable functions on $S^1$ with values in $\mathbb{C}^n$. Let $H= H^{++} \oplus H^0 \oplus H^{--}$ be a polarization (for example the one induced by the action of $-i\frac{d}{d\theta}$). Set $H^+ = H^{++} \oplus H^0$ and $H^-= H^0 \oplus H^{--}$.

\noindent Following~\cite{PressleySegal86} definition 7.1, the positive Grassmannian is defined as follows:

\begin{definition}[Grassmannian]
\label{positivegrassmanian}
The positive Grassmannian $Gr^{+}(H)$ is the set of all closed subspaces $W$ of $H$ such that
\begin{enumerate}
	\item the orthogonal projection $pr_{+}: W \longrightarrow H^+$ is a Fredholm operator,
	\item the orthogonal projection $pr_{--}: W \longrightarrow H^{--}$ is a Hilbert-Schmidt operator.
\end{enumerate}
\end{definition}

We define the virtual dimension of $W$ to be $\nu (W) =\textrm{dim}(\textrm{ker } pr_+) -\textrm{dim}(\textrm{coker } pr_{+})$.

There are various subgrassmannians corresponding to sundry regularity conditions.
The most important examples are:

\begin{definition}[positive algebraic Grassmannian]
\label{algebraicgrassmanian}    
The positive algebraic Grassmannian $Gr_0^+(H)$ consists of subspaces $W\subset Gr^+(H)$ such that $z^k H^+ \subset W \subset z^{-k} H^+$. 
\end{definition}

Using the explicit description $ H=L^2(S^1, \mathbb{C}^n)$, $Gr_0^+(H)$ consists exactly of the elements $W\in Gr(H)$ such that the images of $pr_{--}:W \longrightarrow H^{--}$ and $pr_+: W^{\perp} \longrightarrow H^+$ are polynomials~\cite{PressleySegal86} and~\cite{Gohberg03}.

\noindent There are various other Grassmannians, i.e.\ the rational Grassmannian $Gr_1(H)$, $Gr_{\omega}(H)$ and the smooth Grassmannian $Gr_{\infty}(H)$~\cite{PressleySegal86}, for the definition of the tame Fréchet Grassmannian $G_{t}$ and the $H^1$\ndash Grassmannian~\cite{Freyn09}, corresponding to different regularity conditions.

\begin{definition}[reduced Grassmannian]
\label{reducedgrassmanian}
The reduced positive Grassmannian $Gr^{n,+}(H^n)$ consists of subspaces $W\subset Gr^+(H^n)$ such that $G(W) \subset W$ 
(or explicitly $zW\subset W$).
\end{definition}

The definition of the other types of reduced Grassmannians, especially reduced algebraic Grassmannians, reduced $H^1$- and reduced tame Grassmannians is self explaining. 

The following theorem~\cite{PressleySegal86}, theorem 8.3.2 --- shows this to be the correct notion to work well with the action of loop groups.

\begin{theorem}
The group $L_{\frac{1}{2}}U_n$ of $\frac{1}{2}$\ndash Sobolev loops acts transitively on $Gr^{n,+}$ and the isotropy group of $H^+$ is the group $U_n$ of constant loops. 
\end{theorem}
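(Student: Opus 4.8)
The plan is to exhibit an explicit transitive action of $L_{\frac12}U_n$ on $Gr^{n,+}(H^n)$ and then identify the isotropy subgroup of the base point $H^+$. First I would recall the operator-theoretic setup from~\cite{PressleySegal86}, chapter~6: a loop $\gamma \in L_{\frac12}U_n$ acts on $H^n = L^2(S^1,\mathbb{C}^n)$ by pointwise multiplication, $(\gamma\cdot f)(\theta) = \gamma(\theta)f(\theta)$, and since $\gamma(\theta)$ is unitary for each $\theta$ this is a bounded unitary operator on $H^n$. The key analytic input is that multiplication by a $\frac12$-Sobolev loop has off-diagonal blocks (with respect to the polarization $H = H^{++}\oplus H^0\oplus H^{--}$) that are Hilbert-Schmidt; this is precisely the borderline Sobolev regularity at which the commutator $[\,\gamma, \mathrm{pr}_{H^+}]$ lies in the Hilbert-Schmidt class, which is exactly what is needed so that $\gamma\cdot W \in Gr^+(H^n)$ whenever $W \in Gr^+(H^n)$. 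Moreover multiplication by $\gamma$ commutes with the shift operator $G$ (multiplication by $z$), so it preserves the reduced condition $zW\subset W$; hence the action restricts to $Gr^{n,+}(H^n)$.

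Next I would prove transitivity. The cleanest route is to observe that $Gr^{n,+}(H^n)$ can be identified with the space of measurable families $\theta \mapsto W(\theta)$ of subspaces of $\mathbb{C}^n$ of a fixed dimension, subject to a regularity/winding condition; concretely, a reduced subspace $W$ with $zW\subset W$ corresponds to a based loop in a finite-dimensional Grassmannian $Gr_k(\mathbb{C}^n)$, and the component containing $H^+$ consists of those of virtual dimension $0$ and trivial winding. Given two such $W_1, W_2$ in the same component as $H^+$, one builds a loop $\gamma$ with $\gamma(\theta)W_1(\theta) = W_2(\theta)$ by a fiberwise construction: at each $\theta$ choose a unitary carrying $W_1(\theta)$ to $W_2(\theta)$, and show this can be done so that $\theta\mapsto\gamma(\theta)$ is $\frac12$-Sobolev. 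Taking $W_1 = H^+$ gives transitivity on the relevant component; that $Gr^{n,+}(H^n)$ as defined is connected (or that one restricts to the identity component) follows because the reduced condition together with finite virtual dimension pins down the topological type — I would cite~\cite{PressleySegal86}, chapter~7 for the component structure.

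For the isotropy statement, suppose $\gamma \in L_{\frac12}U_n$ stabilizes $H^+$. Now $H^+ = H^{++}\oplus H^0$ is, in the Fourier picture, the span of the modes $e^{ik\theta}v$ with $k \geq 0$ and $v\in\mathbb{C}^n$. Writing $\gamma(\theta) = \sum_{m} \gamma_m e^{im\theta}$ with $\gamma_m \in \mathrm{Mat}_{n\times n}(\mathbb{C})$, the condition $\gamma\cdot H^+ \subseteq H^+$ forces $\gamma_m = 0$ for all $m < 0$ (apply $\gamma$ to the constant mode $v$ and read off negative Fourier coefficients); applying the same to $\gamma^{-1}$, which is also in $L_{\frac12}U_n$ and also stabilizes $H^+$, forces $\gamma_m = 0$ for $m > 0$ as well. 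Hence $\gamma = \gamma_0$ is a constant loop, and unitarity of $\gamma(\theta)\equiv\gamma_0$ says $\gamma_0\in U_n$. Conversely every constant $U_n$-loop visibly fixes $H^+$, so the isotropy group is exactly $U_n$.

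\textbf{Main obstacle.} The genuinely delicate point is the $\frac12$-Sobolev bookkeeping in the transitivity step: one must verify both that multiplication by an $H^{1/2}$-loop preserves $Gr^+(H^n)$ (the Hilbert–Schmidt property of the off-diagonal block, which is sharp — a lower Sobolev exponent fails) and that the fiberwise-constructed unitary joining two subspaces can actually be arranged to have $H^{1/2}$ regularity rather than merely measurable or continuous regularity. Everything else — the commutation with the shift, the Fourier-coefficient argument for the isotropy group, and the reduction to the identity component — is routine once this analytic core is in place, and for the full details I would refer to~\cite{PressleySegal86}, theorem 8.3.2 and the surrounding material in chapters 6--8.
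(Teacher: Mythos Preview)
The paper does not prove this theorem at all: it simply attributes the result to~\cite{PressleySegal86}, theorem~8.3.2, and moves on. So your proposal already goes further than the paper by sketching an argument, and the analytic setup (pointwise multiplication, Hilbert--Schmidt off-diagonal blocks at the borderline $H^{1/2}$ regularity, commutation with the shift $z$) as well as the Fourier-coefficient computation of the isotropy group are correct and standard.

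The one place where your sketch is imprecise is the transitivity step. A $z$\ndash invariant subspace $W$ with $zW\subsetneq W$ is \emph{not} naturally a measurable family $\theta\mapsto W(\theta)$ of subspaces of $\mathbb{C}^n$; that fiberwise picture describes the fully invariant case $zW=W$, which is disjoint from $Gr^{n,+}$ (such $W$ fail the Fredholm condition). The actual mechanism is the Beurling--Lax--Halmos theorem: every closed shift-invariant subspace of virtual dimension $0$ in $Gr^{n,+}$ is of the form $W=\phi\cdot H^+$ for a measurable $U_n$\ndash valued function $\phi$ on $S^1$, and the Fredholm/Hilbert--Schmidt conditions on $W$ translate into $\phi\in L_{1/2}U_n$. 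This is how Pressley--Segal prove 8.3.2, and it is what your ``fiberwise unitary'' should be replaced by. Once you make that substitution your outline is fine, and since the paper itself only cites the reference, citing~\cite{PressleySegal86} is in any case exactly what is expected here.
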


This theorem yields the equivalences $\Omega_{\frac{1}{2}}U_n = Gr^{n}(H)$ and $\Omega_{alg}U_n= Gr_0^{n}$. Similar statements hold for $Gr_1^{n,+}(H)$, $Gr_{\omega}^{n,+}(H)$, $Gr_{\infty}^{n,+}(H)$  and $Gr_{t}^{n,+}(H)$; 
for $Gr_1^{n,+}(H)$, $Gr_{\omega}^{n,+}(H)$, $Gr_{\infty}^{n,+}(H)$ a proof can be found in \cite{PressleySegal86}; this proof adapts to the case of $Gr_{t}^{n,+}(H)$ straight forwardly. 

\noindent The next step is the definition of the flag varieties:

As subspaces in a flag satisfy $z W_k = W_{k+n}\subset W_k$, all subspaces of periodic flags are taken from $Gr^{n,+}$.

Let $\{e_1,\dots, e_n\}$ be a basis of $V^n\simeq \mathbb{C}^n$ and $V_i:=\textrm{span}\langle e_{i+1}, \dots, e_n\rangle$. 
We define the positive normal flag to be the flag $\{W_{k'}\}_{k'\in \mathbb{Z}}$ such that for $k'=kn+l, k\in \mathbb{Z}, l\in \{0, \dots, {n-1}\}$ we have $W_{k'}=W_{kn+l}:=z^k W_{l}$   and $W_l$ consists of all functions whose negative Fourier coefficients vanish and whose $0$\ndash coefficient is in $V_i$.

To define the manifolds of partial periodic flags the virtual dimension is used. 
For this purpose let $\mathcal{K}\subset \mathbb{Z}$ be a subset such that with $k\in \mathcal{K}$ also $k+nl\in \mathcal{K}, \forall l\in \mathbb{Z}$. 

\noindent Furthermore set $m_{\mathcal{K}}:= \#\{\mathcal{K}\cap \{1,\dots, n\}\}$. Denote those $m_{\mathcal{K}}$\ndash numbers $k_1, \dots, k_{m_{\mathcal{K}}}$.

\begin{samepage}
\begin{definition}[positive periodic flag manifold]
The positive periodic flag manifold $Fl_{\mathcal{K}}^{n,+}$ consists of all flags $\{W_k\}, k \in \mathbb{Z}$  in $H^{n}$ such that
\begin{enumerate}
	\item $W_k \subset \textrm{Gr}^+H$,
	\item $W_{k+1} \subset W_k \forall k$, 
	\item $zW_k = W_{k+n}$.
	\item For every flag $\{W_k\}_{k\in \mathbb{Z}}$ the map $\nu:(\{W_k\}_{k\in \mathbb{Z}})\longrightarrow \mathbb{Z}$ mapping every subspace $W_k$ to its virtual dimension is a surjection onto $\mathcal{K}$.
\end{enumerate}
\end{definition}
\end{samepage}

A flag is full iff $\mathcal{K}=\mathbb{Z}$. It is trivial iff $m_{\mathcal{K}}=1$. Trivial flags are in bijection with elements of $Gr^{n,+}(H)$ under the identification $Gr^{n,+}(H)\ni W_0 \leftrightarrow \{z^k W_0\}_{k\in \mathbb{Z}}\in Fl_{\nu(W_0)+n\mathbb{Z}}^{n,+}$.
\noindent A completely symmetric theory can be developed for $H^-$. 

\begin{theorem}
 The flag complex of all periodic flags is a universal geometric twin building $\mathfrak{B}$. Positive flags correspond to simplices in $\mathfrak{B}^+$, negative ones to simplices in $ \mathfrak{B}^-$
\end{theorem}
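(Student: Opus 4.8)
The plan is to identify the flag complex of periodic flags with the chamber--complex model of the universal geometric twin building by setting up a dictionary between periodic flags and parabolic subgroups of the affine Kac--Moody group $\widehat{L}(G,\sigma)$ of type $\widetilde{A}_{n-1}$ (here $\sigma=\mathrm{id}$), realised through its operator representation on $H^n=L^2(S^1,\mathbb{C}^n)$ via $LU_n$ resp.\ $LGL_n(\mathbb{C})$, and then checking the building and twinning axioms against the definitions given above.

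First I would treat chambers and simplices. A full positive periodic flag is determined by the finite chain $W_0\supsetneq W_1\supsetneq\cdots\supsetneq W_{n-1}\supsetneq zW_0$ in $Gr^{n,+}(H)$. Feeding the transitivity theorem for $Gr^{n,+}$ (the $\tfrac12$--Sobolev loop group acts transitively, with isotropy the constant loops) into an induction over the $n$ steps of this chain, one shows that the loop group acts transitively on the set of full positive periodic flags lying at finite distance from the standard one, and that the stabiliser of the standard flag is the standard affine Borel subgroup $\widehat{B}^{+}$; the mirror argument on $H^{-}$ gives the negative chambers and $\widehat{B}^{-}$. The partial flag manifolds $Fl^{n,+}_{\mathcal{K}}$ then correspond to cosets $\widehat{L}(G,\sigma)/\widehat{P}$ with $\widehat{P}$ the standard parabolic attached to the set of colours $\{k_1,\dots,k_{m_{\mathcal{K}}}\}\subset\{0,\dots,n-1\}$ realised by $\mathcal{K}$, and the refinement order $W\le W'$ on flags corresponds to reverse inclusion $\widehat{P}_{W'}\subset\widehat{P}_{W}$ of parabolics; this matches, simplex by simplex, the positive periodic flags with $\mathfrak{B}^{+}$ and the negative ones with $\mathfrak{B}^{-}$, together with the face relations.

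Next I would install the combinatorics. The Coxeter system is $(\widetilde{W},S)=(\widetilde{S}_n,\{s_0,\dots,s_{n-1}\})$ of type $\widetilde{A}_{n-1}$; a choice of coordinate decomposition $H=\bigoplus_{j\in\mathbb{Z}}\mathbb{C}v_j$ with $zv_j=v_{j+n}$ cuts out the subcomplex of all flags assembled from spans of the $v_j$, which is visibly the Coxeter complex $\Sigma(\widetilde{W},S)$ --- these are the (twin) apartments; they are thin, and the whole complex is thick because the residue of a wall is the set of lines in a $2$--dimensional complex quotient, i.e.\ $\mathbb{P}^1(\mathbb{C})$. For the $W$--metric description I would read $\delta^{+}(gB^{+},g'B^{+})=w$ off the double coset $\widehat{B}^{+}w\widehat{B}^{+}$ in the Bruhat decomposition of $\widehat{L}(G,\sigma)^{+}$ (Lemma~\ref{bruhatforloopgroups} and the Bruhat decomposition theorem above) containing $g^{-1}g'$, with $\delta^{+}$ set to $\infty$ precisely when $g^{-1}g'\notin\widehat{L}(G,\sigma)^{+}$; concretely $\delta^{+}$ is the classical relative--position invariant, recording how the virtual dimensions $\nu(W_i\cap W'_j)$ jump, and by Lemma~\ref{propertiesofgeometricuniversaltwinbuilding}(3) the finite--distance components of $\mathfrak{B}^{\pm}$ are indexed by $\widehat{L}(G,\sigma)/\widehat{L}(G,\sigma)^{\pm}$, geometrically by the residue class of $\nu(W_0)$ modulo $n$ together with the finiteness of the principal part of the flag.

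For the twinning, a positive flag $\{W_k\}\subset Gr^{n,+}$ and a negative flag $\{W'_k\}\subset Gr^{n,-}$ always have a well-defined relative position, since the relevant orthogonal projections remain Fredholm; this yields the codistance $\delta^{*}$ with values in all of $\widetilde{W}$, matching the Bruhat twin decomposition $\widehat{L}(G,\sigma)=\coprod_{w}\widehat{B}^{\pm}w\widehat{B}^{\mp}$, and the twin--building axioms (opposition and the compatibility of distance with codistance) are checked on a twin apartment arising from a single coordinate decomposition of $H$. Assembling the dictionary with all axioms verified is then exactly the definition of the universal geometric twin building together with Lemma~\ref{propertiesofgeometricuniversaltwinbuilding}. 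I expect the main obstacle to be the transitivity--and--stabiliser step for the \emph{completed} (not merely algebraic) flag variety: one must show that every periodic flag in $Gr^{n,+}$, not only those in the algebraic orbit, is carried to the standard flag by an element of the appropriately completed loop group, unique modulo $\widehat{B}^{+}$ --- that is, that the geometric class of periodic flags of finite principal part coincides with the group--theoretic orbit $\widehat{L}(G,\sigma)^{+}\!\cdot(\text{standard flag})$. This is precisely where the analytic input of~\cite{PressleySegal86} (Fredholmness and the Birkhoff/Bruhat factorisation on $Gr$) has to be matched with the algebraic Bruhat theory of~\cite{Kumar02}.
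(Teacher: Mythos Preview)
Your proposal is correct but follows a genuinely different route from the paper. You argue group-theoretically: establish a bijection between full periodic flags and cosets $\widehat{L}(G,\sigma)/\widehat{B}^{\pm}$ via transitivity of the loop-group action on $Gr^{n,+}$ together with the identification of the stabiliser as the affine Borel, and then transport the $W$-metric building structure across this bijection from the abstract universal geometric twin building (Lemma~\ref{propertiesofgeometricuniversaltwinbuilding}), reading distances and codistances off the Bruhat and twin-Bruhat decompositions.

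The paper instead works intrinsically on the flag complex. It defines apartments directly via \emph{frames} --- periodic families of lines $\{U_k\}_{k\in\mathbb{Z}}$ with $U_{k+n}=zU_k$ spanning $H^n$ --- and verifies the simplicial building axioms on the flag side. The combinatorial core is Lemma~\ref{flagapartment}, which shows by an explicit pigeonhole argument on the permutation $\pi(k)=\{m\mid (W_m\setminus W_{m+1})\cap(W'_k\setminus W'_{k+1})\neq\emptyset\}$ that any two \emph{compatible} flags lie in a common frame-apartment; Lemma~\ref{axiombuilding2foralgebraicflagbuildings} then supplies the apartment-isomorphism axiom. The twinning is handled via twin apartments in the sense of Abramenko--Ronan rather than via the codistance you extract from the Bruhat twin decomposition. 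What your route buys is a clean structural argument and a direct link to the $BN$-pair definition; what the paper's route buys is an explicit combinatorial description of apartments and of the compatibility relation cutting out the connected components, and it largely sidesteps the analytic transitivity-and-stabiliser problem you correctly flag as the main obstacle, since the building axioms are checked flag-by-flag rather than deduced from an orbit identification.
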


\begin{proof} This is the main result of chapter $6$ of  \cite{Freyn09}. \end{proof}

\noindent We give some further hints:
One starts by defining apartments via frames, i.e.\ a set of $1$\ndash dimensional subspaces, that span $V$. Then one proves two lemmas showing that those apartments satisfy the axioms in the simplicial complex definition of a building. Using those, one shows, that each connected component is a building. Then one checks using twin apartments, that $\mathfrak{B}=\mathfrak{B}^+ \cup \mathfrak{B}^-$ is a universal twin building (cf.~\cite{AbramenkoRonan98} for the concept of twin apartments).

 \begin{definition}[frames and affine Weyl group]

A frame is a sequence of subspaces $\{U_k\}_{k\in \mathbb{Z}}\subset H^n$ such that $U_{k+n}=z U_k$ and $H^n=\bigcup U_k$. We call a permutation $\pi: \{U_k\}_{k \in\mathbb{Z}}\longrightarrow \{U_k\}_{k \in\mathbb{Z}}$ admissible if $\pi(U_{k+n})= \pi(U_k)+n$.
The affine Weyl group $W_{\textrm{aff}}$ is defined to be the group of admissible permutations of $\{U_k\}_{k \in\mathbb{Z}}$.

\end{definition}

\begin{definition}[apartment]

Let $\{W_k\}$ be a full flag, $\{\leq \hspace{-3pt}W_k\}$ the set of all partial flags in $\{W_k\}$, hence in simplicial complex language, the boundary of $\{W_k\}$ and $\{U_k\}_{k \in\mathbb{Z}}$ a frame. The apartment $\mathcal{A}(\{W_k\}, U_k)$ consists of all flags that are transformations of flags in $\{\leq W_k\}$ by admissible permutations of $\{ \{U_k\}_{k \in\mathbb{Z}}\}$.
\end{definition}

\begin{lemma}
\label{flagapartment}
For a pair of two flags $\{W_k\}$ and $\{W'_{k'}\}$ there exists an apartment containing both of them iff they are compatible in the sense that for all elements $W_k \in \{W_k\}$  there are elements $W'_{i'},W'_{l'}\in \{W'_{k'}\}$ such that $W'_{i'} \subset W_k \subset W'_{l'}$ and vice versa.
Compatibility defines an equivalence relation on the space of flags.
\end{lemma}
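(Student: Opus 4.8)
The plan is to prove the two implications of the characterization separately and then dispatch the equivalence-relation claim, which is elementary. Throughout I read $\{W_k\}$ as a full periodic flag (the case relevant for the building axioms), so $W_k\in\mathrm{Gr}^{+}(H^n)$, $zW_k=W_{k+n}$, $\dim W_k/W_{k+1}=1$, $\overline{\bigcup_k W_k}=H^n$ and $\bigcap_k W_k=0$; a frame $\{U_m\}$ occurring in an apartment is always subordinate to its reference flag, so every subspace in that apartment is the closed span of a subcollection of the $U_m$, and inclusion of two such coordinate subspaces is equivalent to inclusion of the corresponding index sets.

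\emph{Common apartment $\Rightarrow$ compatible.} Suppose $\{W_k\}$ and $\{W'_{k'}\}$ both lie in the apartment of a frame $\{U_m\}$. I would first note that the flag axioms alone pin down the index sets: writing $W_k=\overline{\mathrm{span}}\{U_m:m\in A_k\}$, the $A_k$ form a doubly infinite chain with $|A_k\setminus A_{k+1}|=1$, $A_{k+n}=A_k+n$, $\bigcup A_k=\mathbb Z$, $\bigcap A_k=\emptyset$; setting $\phi(m):=\max\{k:m\in A_k\}$ produces an admissible bijection $\phi\colon\mathbb Z\to\mathbb Z$ with $A_k=\{m:\phi(m)\ge k\}$, and likewise $W'_{k'}=\overline{\mathrm{span}}\{U_m:\psi(m)\ge k'\}$ for an admissible $\psi$. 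Since $A_k$ is a finite union of arithmetic rays bounded below and $A_k^{c}$ a finite union of arithmetic rays bounded above, $\psi(A_k)$ is bounded below and $\psi(A_k^{c})$ bounded above; taking $l':=\min\psi(A_k)$ and $i':=1+\max\psi(A_k^{c})$ gives $A'_{i'}\subseteq A_k\subseteq A'_{l'}$, i.e.\ $W'_{i'}\subseteq W_k\subseteq W'_{l'}$, and the symmetric choice gives the ``vice versa''. Hence the flags are compatible.

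\emph{Compatible $\Rightarrow$ common apartment.} This is the substantive half. First I would upgrade compatibility to commensurability: using the sandwiching together with $zW_k=W_{k+n}$ and fullness (periodicity makes the offsets $i'(k)-k$, $l'(k)-k$ bounded), there is a constant $a\ge 0$ with $W'_{k+a}\subseteq W_k\subseteq W'_{k-a}$ for all $k$ after aligning the two indexings, so $W_k\cap W'_k$ is finite-codimensional in both $W_k$ and $W'_k$. I then build a frame subordinate to $\{W_k\}$ that is \emph{also} subordinate to $\{W'_{k'}\}$: for each step $W_m\supsetneq W_{m+1}$ let $\mu(m)$ be the largest $k'$ with $W'_{k'}\cap W_m\supsetneq W'_{k'}\cap W_{m+1}$ (and $\mu(m)=-\infty$ if there is none); then $\mu(m+n)=\mu(m)+n$ and $W_m\cap W'_{\mu(m)}\not\subseteq W_{m+1}$, so a line $U_m\subseteq W_m\cap W'_{\mu(m)}$ with $U_m\not\subseteq W_{m+1}$ exists. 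I would take $U_m$ to be the orthogonal-complement line of $W_{m+1}$ in $W_m$ whenever that line already lies in $W'_{\mu(m)}$, extend by $U_{m+n}:=zU_m$, and use commensurability to see that only finitely many steps per period force the non-orthogonal choice, so $\{U_m\}$ deviates from the orthonormal frame $\{W_m\ominus W_{m+1}\}$ only by a bounded amount per period; from this one deduces that $\{U_m\}$ is still complete in $H^n$ with closed coordinate subspaces, i.e.\ a genuine frame subordinate to $\{W_k\}$. A telescoping argument along the steps, using that $W'_{k'}$ jumps at step $m$ iff $U_m\subseteq W'_{k'}$ (this is where the choice $U_m\subseteq W'_{\mu(m)}$ enters), then shows $W'_{k'}=\overline{\mathrm{span}}\{U_m:U_m\subseteq W'_{k'}\}$, so $\{W'_{k'}\}$ is a coordinate flag of $\{U_m\}$ and both flags lie in $\mathcal A(\{W_k\},\{U_m\})$.

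The main obstacle will be exactly this frame construction: a frame subordinate to $\{W_k\}$ must place each $U_m$ inside the prescribed subspace $W'_{\mu(m)}$ while remaining \emph{complete} in $H^n$ with closed coordinate subspaces, and an unrestricted choice of complementary lines can fail both conditions; commensurability of compatible flags is precisely what confines the necessary deviations to finitely many lines per period, and turning this into a clean completeness estimate is the technical crux. Finally, compatibility is reflexive (take $i'=l'=k$) and symmetric by the wording of the condition, and transitivity is immediate: from $W'_{i'}\subseteq W_k\subseteq W'_{l'}$ and $W''_{a}\subseteq W'_{i'}$, $W'_{l'}\subseteq W''_{b}$ one reads off $W''_{a}\subseteq W_k\subseteq W''_{b}$, and symmetrically; equivalently, by Lemma~\ref{propertiesofgeometricuniversaltwinbuilding} compatibility coincides with lying in the same connected component of $\mathfrak B^{\epsilon}$, which is manifestly an equivalence relation.
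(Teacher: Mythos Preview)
Your reverse implication is essentially the paper's argument in different clothing. The paper defines $\pi(k):=\{m:\exists\,v\in(W_m\setminus W_{m+1})\cap(W'_k\setminus W'_{k+1})\}$ and proves via a pigeonhole-plus-periodicity count that $|\pi(k)|=1$, so $\pi\in W_{\mathrm{aff}}$; then any choice of $v_i$ in those joint jumps, extended by $U_{ln+i}:=z^lU_i$, gives the common frame. Your $\mu(m)$ is precisely $\pi^{-1}(m)$: the monotonicity of the jump condition (which you implicitly use) forces $(W_m\setminus W_{m+1})\cap(W'_{k'}\setminus W'_{k'+1})\neq\emptyset$ exactly when $k'=\mu(m)$, so the two constructions pick the same lines. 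Your ``telescoping argument'' that $\{W'_{k'}\}$ is subordinate to the frame needs $\mu$ to be a bijection mod~$n$, and that is exactly what the paper's pigeonhole step supplies; you should make this step explicit rather than leave it inside the telescoping.

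Where you diverge is the detour through orthogonal complements and frame completeness. The paper does not touch this, and in the periodic setting it is not needed: commensurability gives a bound $a$ with $W'_{k+a}\subseteq W_k\subseteq W'_{k-a}$, so for each fixed $k'$ the identity $W'_{k'}=W_{k'+a}+\mathrm{span}\{U_j:k'-a\le j<k'+a,\ \mu(j)\ge k'\}$ is a \emph{finite}-dimensional correction to a member of the first flag. Thus subordination of $\{W'_{k'}\}$ to the frame reduces to subordination of $\{W_k\}$, and no analytic control on ``how many steps per period force the non-orthogonal choice'' is required. Your claim that only finitely many steps per period force a non-orthogonal choice is in fact not obviously true (the orthogonal complement line need not lie in $W'_{\mu(m)}$ for any $m$), but since the whole orthogonality discussion is unnecessary, this does not damage the argument once you drop it. Your forward implication and the equivalence-relation paragraph are fine; the paper omits the former entirely and treats the latter as evident.
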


\begin{proof}
As we have seen the complex of all flags is a chamber complex. Hence without loss of generality we can assume that $\{W_k\}$ and $\{W_k'\}$ are two maximal compatible flags. For each $k \in \mathbb{Z}$, we define the set $\pi(k):= 
\{m|\exists v \in (W_m\backslash W_{m+1})\cap (W_k'\backslash W_{k+1}')\}$. We have to show that $|\pi(k)|=1$ for all $k$.
So for $i\in \{0,\dots, n-1\}$ we choose vectors $v_i \subset \pi(i)$ and put $U_i = \textrm{span}\langle v_i\rangle$. Furthermore for $i'=ln+i$ set $U_{i'}=U_{ln+i}= z^l U_{i}$.

The proof now consists of several steps:
\begin{itemize}
\item[-]  $\{U_k\}$ is a periodic frame.  As the flags $\{W_k\}$ and $\{W_k'\}$ are periodic, $v\in (W_m\backslash W_{m+1})\cap (W_k'\backslash W_{k+1}')$ is equivalent to $z^l v \in (W_{m+ln}\backslash W_{m+1+ln})\cap (W_{k+ln}'\backslash W_{k+1+ln}')$. 
\item[-]  $W_k= W_{k+1}\oplus U_k$, $W_m= W_{m+1}\oplus U_k$ for all $k$ and $m\subset \pi(k)$. So the apartment associated to $\{U_k\}$ contains $\{W_k\}$ and $\{W_k'\}$. 
\item[-]  $\pi(k+n)=\pi(k)+n$ follows from the periodicity of $\{W_k\}$ and $\{W_k'\}$.
\item[-]  So we are left with showing that $\pi$ is a permutation, that is $|\pi(k)|=1\hspace{1ex}\forall k$. 
The compatibility condition gives 
$$z^{l+1}\{W_k\}=\{W_{k+(l+1)n}\}\subset z\{W_k'\}=\{W_{k+n}'\}\subset\{W_{k+1}'\} \subset \{W_k'\}\subset z^{-l}\{W_k\}\,. $$

So $W'_k\backslash W'_{k+1} \subset W_{k-ln}$. This shows that there are numbers $m$ such that the set $(W_m\cap W_k'\backslash W_{k+1}')$ is nonempty. 
On the other hand $W_{k+(l+1)n} \subset W'_{k+1}$ shows that the set of those $m$ is bounded from above. 
So there is for every $k$ a maximal $m$ such that $(W_m\cap (W_k'\backslash W_{k+1}')$ is nonempty. 
But then $(W_{m+1}\cap (W_k'\backslash W_{k+1}'))$ is empty.
So $(W_m \backslash W_{m+1}')\cap (W_k'\backslash W_{k+1}')$ is nonempty. So $\pi(k)$ is nonempty for all $k$. Symmetrically also $\pi^{-1}(m)$ is nonempty for all $m$. We use now the periodicity condition: $\pi(n+k)=\pi(k)+n$.  As each set $\pi(k)$ is nonempty, for every $l\in \{0, \dots, n-1\}$ there is $k$, such that $\pi(k)=l(\textrm{mod } n)$. 

This means that $\pi(k+n\mathbb Z)=l+n\mathbb{Z}$ as for each $l$, $l+n \mathbb{Z}$ is in the image.  The pigeon hole principle asserts that $|\pi(k)|=1 (\textrm{mod n})$.
 Then the periodicity shows that $|\pi(k)|=1$. Hence $\pi$ is a permutation and thus an element of $W_{\textrm{aff}}$. 
\qedhere

\end{itemize}
\end{proof}

\noindent Furthermore by direct calculation one finds:

\begin{lemma} [apartments are isomorphic]
\label{axiombuilding2foralgebraicflagbuildings}

For every pair of apartments $\mathcal{A}$ and $\mathcal{A}'$ there is an isomorphism $\varphi : \mathcal{A}\longrightarrow \mathcal{A}'$. If $\{W_k\}, \{W'_{k'}\} \subset \mathcal{A}\cap \mathcal{A}'$ such that $\{W_k\}$ is full, one can choose $\varphi$ in a way that it fixes $\{W_k\}$ and $\{W'_{k'}\}$ pointwise.

\end{lemma}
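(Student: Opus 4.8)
The plan is to realize every apartment, once a chamber is chosen in it, as a copy of the Coxeter complex $\Sigma(W_{\textrm{aff}},S)$ of type $\widetilde{A}_{n-1}$, and then to transport one such identification onto the other. By the definition of an apartment, $\mathcal{A}(\{W_k\},\{U_k\})$ is obtained from the partial flags of a single full flag adapted to the frame $\{U_k\}$ by applying all admissible permutations of $\{U_k\}$; the group $W_{\textrm{aff}}$ of admissible permutations acts on the full flags of $\mathcal{A}$ that are adapted to this frame, and one checks that this action is simply transitive, while a simplex $x$ of $\mathcal{A}$ has stabilizer $w_xW_{J_x}w_x^{-1}$, where $J_x\subseteq S$ is the cotype of $x$ (which of the subspaces of a refining full flag are dropped in $x$). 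Hence a choice of chamber $C_0\subset\mathcal{A}$ yields a simplicial isomorphism $\iota_{C_0}\colon\mathcal{A}\longrightarrow\Sigma(W_{\textrm{aff}},S)$, sending a simplex to the coset $wW_J$ that represents it relative to $C_0$, with $\iota_{C_0}(C_0)=W_{\emptyset}$. For two apartments $\mathcal{A},\mathcal{A}'$ I would pick arbitrary chambers $C_0\subset\mathcal{A}$, $C_0'\subset\mathcal{A}'$ and set $\varphi:=\iota_{C_0'}^{-1}\circ\iota_{C_0}\colon\mathcal{A}\to\mathcal{A}'$; this already gives the first assertion.

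For the refined statement, suppose the full flag $\{W_k\}$ and the simplex $\{W'_{k'}\}$ both lie in $\mathcal{A}\cap\mathcal{A}'$. I would use the common chamber $C_0:=\{W_k\}$ in the identifications of both apartments, defining $\varphi$ to be the composite of $\iota^{\mathcal{A}}_{\{W_k\}}\colon\mathcal{A}\to\Sigma(W_{\textrm{aff}},S)$ with the inverse of $\iota^{\mathcal{A}'}_{\{W_k\}}\colon\mathcal{A}'\to\Sigma(W_{\textrm{aff}},S)$. Then $\varphi$ is a simplicial isomorphism with $\varphi(\{W_k\})=\{W_k\}$, and the faces of $\{W_k\}$ correspond on both sides to the standard parabolic cosets $W_J\subseteq W_{\textrm{aff}}$, which $\varphi$ carries to themselves; since the face of $\{W_k\}$ of cotype $J$ is determined by $\{W_k\}$ and $J$ alone (not by the apartment), $\varphi$ fixes $\{W_k\}$ and all its faces pointwise. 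It then remains to verify that the coset $wW_J$ labelling $\{W'_{k'}\}$ relative to $\{W_k\}$ is the same whether computed inside $\mathcal{A}$ or inside $\mathcal{A}'$; granting this, $\varphi$ maps every subspace occurring in $\{W'_{k'}\}$ to the subspace carrying the same label, namely to itself.

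The content of "direct calculation" is precisely this intrinsic-label claim, and it is the step I expect to be the main obstacle. The point is that the position of a flag $x=\{W'_{k'}\}$ relative to the chamber $\{W_k\}$ is read off from the subspaces alone, with no reference to any frame: as in the proof of Lemma~\ref{flagapartment}, the interleaving of the jumps of $\{W_k\}$ and of $\{W'_{k'}\}$, encoded by the array of integers $\dim\bigl((W_m\cap W'_{k'})/(W_{m+1}\cap W'_{k'}+W_m\cap W'_{k'+1})\bigr)$, is an admissible ($\widetilde{A}_{n-1}$) permutation matrix and hence pins down a unique coset $wW_J$ in $W_{\textrm{aff}}$. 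Because these dimensions depend only on the actual subspaces $W_m$ and $W'_{k'}$, they are the same whether $x$ is viewed in $\mathcal{A}$ or in $\mathcal{A}'$, so the two labels coincide and $\varphi(x)=x$ pointwise. This explicit relative-position computation is the concrete substitute, available in the flag model, for the retraction argument $\rho_{\mathcal{A},C}$ used to establish axiom (B2) in abstract building theory; everything else — that $\iota_{C_0}$ is simplicial and that $W_{\textrm{aff}}$ acts simply transitively on the adapted full flags — is routine.
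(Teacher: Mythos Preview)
Your argument is correct and is precisely the kind of verification the paper is pointing to: the paper gives no proof of this lemma beyond the phrase ``by direct calculation,'' so there is nothing substantive to compare against. What you have written is a careful unpacking of that calculation --- identify each apartment with $\Sigma(W_{\textrm{aff}},S)$ via a chosen chamber, use the common chamber $\{W_k\}$ for both identifications, and then observe that the relative position of $\{W'_{k'}\}$ with respect to $\{W_k\}$ is computed from the interleaving dimensions $\dim\bigl((W_m\cap W'_{k'})/(W_{m+1}\cap W'_{k'}+W_m\cap W'_{k'+1})\bigr)$ and hence depends only on the flags themselves, not on the ambient apartment. This is exactly the standard proof of axiom~(B2) specialized to the flag model, and it dovetails with the permutation $\pi$ constructed in Lemma~\ref{flagapartment}, which is presumably what the author had in mind.
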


\noindent Those two lemmas yield the theorem:

\begin{theorem}[Tits building]
\label{titsbuilding}
The simplicial complex associated to each equivalence class of compatible flags is an affine Tits building of type $\widetilde{A}_{n-1}$.
\end{theorem}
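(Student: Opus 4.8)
The plan is to fix one compatibility class of periodic flags, let $\Sigma$ denote the associated simplicial complex, and verify directly the chamber-complex definition of a building (Definition~\ref{building}), taking as apartments the subcomplexes $\mathcal{A}(\{W_k\},\{U_k\})$ attached to a full flag adapted to a frame. It has already been observed in the proof of Lemma~\ref{flagapartment} that the complex of all periodic flags is a chamber complex; restricting to $\Sigma$, its chambers are the full periodic flags of the class, and a wall arises from a chamber by deleting a single subspace $W_k$ together with all its translates $z^{l}W_k=W_{k+ln}$. Thickness is then a one-line check: refilling such a wall amounts to choosing a line in the two-dimensional quotient $W_{k-1}/W_{k+1}$, i.e.\ a point of $\mathbb{P}^1(\mathbb{C})$, so the wall is a face of infinitely many chambers, in particular of at least three.

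Next I would analyse an apartment. Fix a frame $\{U_k\}_{k\in\mathbb Z}$ with $U_{k+n}=zU_k$ and a full flag $\{W_k\}$ with $W_k=W_{k+1}\oplus U_k$; the admissible permutations $W_{\mathrm{aff}}$ act on such flags, and $\mathcal{A}(\{W_k\},\{U_k\})$ is the set of all faces of the $W_{\mathrm{aff}}$-orbit of $\{W_k\}$. One checks that $W_{\mathrm{aff}}$ acts simply transitively on the chambers of $\mathcal{A}$, and that two chambers of $\mathcal{A}$ share a wall exactly when they differ by an elementary admissible transposition of two neighbouring frame lines; since a pair of lines has exactly two orderings, each wall of $\mathcal{A}$ is a face of exactly two chambers of $\mathcal{A}$, so $\mathcal{A}$ is thin and is isomorphic to the Coxeter complex of $(W_{\mathrm{aff}},S)$. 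By the definition of the admissible permutations, $W_{\mathrm{aff}}$ is the affine symmetric group $\mathbb Z^{n-1}\rtimes S_n$, i.e.\ the Coxeter group of type $\widetilde A_{n-1}$, which pins down the type of the building.

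It then remains to check the two axioms of Definition~\ref{building}, and both are supplied by the lemmas already proved. Axiom~(1): two simplices of $\Sigma$ are periodic flags lying in a single compatibility class, hence compatible, hence by Lemma~\ref{flagapartment} there is an apartment containing both; one verifies in addition that this apartment consists only of flags compatible with the chosen full flag, so it lies inside $\Sigma$. Axiom~(2): if $\mathcal{A},\mathcal{A}'$ are apartments and $\{x,C\}\subset\mathcal{A}\cap\mathcal{A}'$ with $C$ a chamber, then applying Lemma~\ref{axiombuilding2foralgebraicflagbuildings} with $C$ the full flag and $x$ the further flag produces a chamber-complex isomorphism $\mathcal{A}\to\mathcal{A}'$ fixing $x$ and $C$ pointwise. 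Combined with the thickness of $\Sigma$ and the thinness and type of the apartments, this shows $\Sigma$ is an affine Tits building of type $\widetilde A_{n-1}$.

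The step I expect to be the main obstacle is the precise identification of an apartment with the Coxeter complex of the affine symmetric group together with the verification of thinness: one must confirm that fixing a frame genuinely cuts out a thin subcomplex on which $W_{\mathrm{aff}}$ acts simply transitively on chambers, with each omitted-subspace wall bordering exactly two chambers of that apartment, and match the elementary admissible transpositions with the simple reflections of type $\widetilde A_{n-1}$. Everything else is either bookkeeping (thickness, the chamber-complex property, the chamber/wall dictionary for flags) or has been packaged into Lemmas~\ref{flagapartment} and~\ref{axiombuilding2foralgebraicflagbuildings}; a secondary point deserving care is that every apartment lies inside a single compatibility class, so that the building structure attaches to each class separately rather than to the whole flag complex.
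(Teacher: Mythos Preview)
Your proposal is correct and follows precisely the route the paper takes: the paper's argument is the one-line ``Those two lemmas yield the theorem,'' where Lemma~\ref{flagapartment} supplies building axiom~(1) and Lemma~\ref{axiombuilding2foralgebraicflagbuildings} supplies axiom~(2), with apartments defined via frames. You have simply filled in the details the paper leaves implicit---thickness, thinness of apartments, identification of $W_{\mathrm{aff}}$ with the affine symmetric group of type $\widetilde{A}_{n-1}$---which is exactly the verification the paper has in mind.
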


\begin{cor}
The simplicial complex of positive (partial) flags in $Gr_0^{n,+}$ is an algebraic affine Tits building.
\end{cor}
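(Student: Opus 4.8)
The plan is to deduce this from Theorem~\ref{titsbuilding} by identifying the positive periodic flags with subspaces in $Gr_0^{n,+}$ as exactly one equivalence class of compatible flags. First I would unwind Definition~\ref{algebraicgrassmanian}: a subspace $W\in Gr^{n,+}(H)$ lies in $Gr_0^{n,+}$ precisely when it is squeezed between two powers of the shift, $z^{k}H^+\subset W\subset z^{-k}H^+$ for some $k\in\mathbb{N}$ (the reducedness $zW\subset W$ being automatic for subspaces of periodic flags, by the remark preceding Lemma~\ref{flagapartment}). The standard positive normal flag $\{W_{k}\}$ has all its subspaces in $Gr_0^{n,+}$, and it interpolates between the consecutive powers $z^{l}H^+$ of the shift; its $W_{\textrm{aff}}$\ndash translates (that is, its apartment) again consist of algebraic flags. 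Hence for an arbitrary positive periodic flag $\{W'_{k'}\}$ all of whose subspaces lie in $Gr_0^{n,+}$, the squeezing inclusions $z^{m}H^+\subset W'_{k'}\subset z^{-m}H^+$ combined with the interpolation property of the standard flag furnish, in both directions, the inclusions required for compatibility in the sense of Lemma~\ref{flagapartment}. So every algebraic flag is compatible with the standard algebraic flag.

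Second, I would check the converse inclusion of classes: a positive periodic flag compatible with the standard algebraic flag has each of its subspaces squeezed between some $z^{m}H^+$ and $z^{-m}H^+$ (take the extreme members of the standard flag provided by the compatibility condition), hence satisfies the Fredholm and Hilbert\ndash Schmidt conditions of Definition~\ref{positivegrassmanian} automatically — indeed the relevant projections become polynomial, which is Definition~\ref{algebraicgrassmanian} — while the periodicity $zW'_{k'}=W'_{k'+n}$ already gives $zW'_{k'}\subset W'_{k'}$, so all its subspaces lie in $Gr_0^{n,+}$. Since compatibility is an equivalence relation (Lemma~\ref{flagapartment}), this shows that the set of all positive periodic flags with subspaces in $Gr_0^{n,+}$ is exactly the compatibility class of the standard algebraic flag. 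Applying Theorem~\ref{titsbuilding} to this class, the associated simplicial complex is an affine Tits building of type $\widetilde{A}_{n-1}$.

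Third, to justify the adjective \emph{algebraic}, I would invoke the identification $\Omega_{alg}U_n=Gr_0^{n}$ coming from the transitivity theorem (the algebraic loop group acts transitively on $Gr_0^{n,+}$ with isotropy the constant loops): the chambers of this building correspond bijectively to conjugates of the standard algebraic affine Borel subgroup of the algebraic affine Kac\ndash Moody group attached to $\widetilde{A}_{n-1}$, and by the remark that for an algebraic affine Kac\ndash Moody group the universal geometric twin building coincides with the algebraic twin building, the complex in question is the positive half $\mathfrak{B}^+$ of that algebraic twin building. The main obstacle I anticipate is the bookkeeping in the squeezing argument of the first two steps — in particular verifying that compatibility with the standard algebraic flag genuinely forces the Fredholm, Hilbert\ndash Schmidt and polynomiality conditions rather than something weaker; once the equivalence ``algebraic flag $\Leftrightarrow$ compatible with the standard algebraic flag'' is established, the rest is a direct appeal to Theorem~\ref{titsbuilding} and the cited transitivity result.
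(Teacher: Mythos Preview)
Your proposal is correct and follows exactly the route the paper intends: the corollary is stated without proof, as an immediate consequence of Theorem~\ref{titsbuilding}, and you have simply spelled out the implicit step of identifying the positive periodic flags with subspaces in $Gr_0^{n,+}$ as a single compatibility class (that of the standard positive normal flag) before invoking the theorem. The third paragraph on the adjective ``algebraic'' via $\Omega_{alg}U_n=Gr_0^{n}$ is likewise what the paper has in mind, given the remark that for algebraic Kac-Moody groups the universal geometric twin building coincides with the algebraic one.
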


For the special case of algebraic subspaces i.e.\ elements of the algebraic Grassmannian this construction coincides with the well-known lattice description~\cite{Mitchell88}, \cite{Garrett97}, \cite{AbramenkoNebe02}, \cite{Kramer02}. Similar constructions are possible for the other classical groups~\cite{Freyn09} for a sketch.

\section{Conclusion and outlook}
\label{section:conclusion_and_outlook}

As we have shown Kac-Moody geometry has reached a mature state. The classical objects whose symmetries are described by semisimple Lie groups --- symmetric spaces, polar actions, isoparametric submanifolds, buildings --- have infinite dimensional counterparts whose symmetries are defined by analytic affine Kac-Moody groups. Furthermore important results known from the finite dimensional theory that describe the connections between those objects, generalize to the infinite dimensional setting. The crucial point of the theory is a control of the functional analytic framework that permits a generalization of the algebraic operations. Hence while differential geometric in spirit, the field incorporates various concepts of infinite dimensional Lie theory, functional analysis and algebra.

There are still many important open problems.

\begin{enumerate}
 \item {\bf Mostow rigidity:} Study quotients of Kac-Moody symmetric spaces of the noncompact type. We
  conjecture the existence of a Mostow-type theorem for Kac-Moody
  symmetric spaces of the noncompact type, if the rank $r$ of each irreducible factor satisfies $r\geq 4$; 
  in the finite dimensional situation the main ingredients for the
  proof of Mostow rigidity are the spherical buildings which are
  associated to the universal covers $\widetilde{M}$ and
  $\widetilde{M}'$ of two homotopy equivalent locally symmetric spaces
  $M=\widetilde{M}/\Gamma$ and $M'=\widetilde{M'}/\Gamma'$ (suppose the rank $r$ of each de Rham factor satisfies
  $\textrm{rank}(M)\geq 2$). To prove Mostow rigidity one has to show that a homotopy
  equivalence of the quotients lifts to a quasi isometry of the universal covers and
  induces a building isomorphism. This step is done via a description of the boundary at infinity. By rigidity results of Jacques Tits
  this building isomorphism is known to introduce a group
  isomorphism which in turn leads to an isometry of the quotients.

  Hence to prove a generalization of Mostow rigidity to quotients of
  Kac-Moody symmetric spaces of the noncompact type along these lines, one needs a description of the boundary of Kac-Moody symmetric spaces of the non compact type. In view of their Lorentz structure those spaces are not CAT(0). Consequently a direct generalization of the finite dimensional ideas to construct a boundary seems difficult. Nevertheless as for each point in the symmetric space a boundary can be defined, a complete definition using the action of the Kac-Moody groups seems within reach. We note that a generalization or adaption of the methods developed 
in \cite{Caprace09} and in \cite{Gramlich09} might lead to a proof of Mostow rigidity based on local\ndash global methods. By work of Andreas Mars this is possible for algebraic Kac-Moody groups in case of rings with sufficiently many units \cite{Mars10}.
\item{\bf Holonomy:} Study the holonomy of infinite dimensional Lorentz manifolds. More precisely: Is there a generalization of the Berger holonomy theorem to Kac-Moody symmetric spaces? In the finite dimensional Riemannian case, Berger's holonomy theorem tells us the following: Let $M$ be a simply connected manifold with irreducible holonomy. Then either the holonomy acts transitively on the tangent sphere or $M$ is  symmetric.  Kac-Moody symmetric spaces are Lorentz manifolds and have a distinguished, parallel lightlike vector field, corresponding to $c$. In general relativity similar finite dimensional objects are known as Brinkmann waves~\cite{Brinkmann25}. Hence it seems that Kac-Moody symmetric spaces should be understood as a kind of infinite dimensional Brinkmann wave. The conjectured holonomy theorem states then that the holonomy of an infinite dimensional Brinkmann wave is either transitive on the horosphere around $c$ or the space is a Kac-Moody symmetric space.
\item{\bf Characterization:} Is there some intrinsic characterization of Kac-Moody symmetric spaces? Probably the existence of sufficiently many finite dimensional flats and the closely related Fredholm property of the induced polar actions will be part of this characterization. From an algebraic point of view, it would be very interesting, to investigate if there are symmetric spaces associated to Lorentzian Kac-Moody algebras - nevertheless, the absence of good explicit realizations seems to be a serious impediment. From a functional analytic point of view, a generalization to other classes of maps, i.e.\ holomorphic maps on Riemann surfaces or symplectic maps would be interesting. From the finite dimensional blueprint the development of an infinite dimensional version of the theory of non-reductive pseudo-Riemannian symmetric spaces as developed in the finite dimensional case by Ines Kath and Martin Olbricht in \cite{Kath06} seems promising.  In view of the characterization of Kac-Moody symmetric spaces as infinite dimensional Brinkmann waves, we would like to understand, how the existence of this special direction is related to other structure properties. It is hence natural, to ask, if there is a class of infinite dimensional Lorentz symmetric spaces without such a distinguished direction? 
\item{\bf \glqq Semi\ndash Riemannian\grqq\ Kac-Moody symmetric spaces:} We argued that the Kac-Moody symmetric spaces, we developed correspond to Riemannian symmetric spaces.  Marcel Berger studied reductive symmetric spaces of arbitrary index~\cite{Berger57}. The Kac-Moody analogue should exist but is not constructed nor is a classification available. Remark that a classification can be given without a construction of the spaces just by considering involutions of Kac-Moody algebras~\cite{Heintze09}.  
 \item{\bf Kac-Moody symmetric spaces as Moduli space:} Recent results
  by Shimpey Kobayashi and Josef Dorfmeister show that the moduli
  spaces of different classes of integrable surfaces can be understood
  as real forms of loop groups of $\mathfrak{sl}(2,\mathbb{C})$~\cite{Kobayashi09}. We conjecture that Kac-Moody symmetric spaces
  can be interpreted as Moduli spaces of special classes of submanifolds in
  more general situations.

\end{enumerate}

\end{document}